\journal{}
\begin{document}

\begin{frontmatter}

\title{Blow-up of radial solutions for the intercritical inhomogeneous NLS equation}


\author[mymainaddress,mysecondaryaddress]{Mykael Cardoso}
\ead{mykael@ufpi.edu.br}

\author[mymainaddress]{Luiz Gustavo Farah\corref{mycorrespondingauthor}}
\cortext[mycorrespondingauthor]{Corresponding author}
\ead{farah@mat.ufmg.br}

\address[mymainaddress]{Department of Mathematics, ICEx, Universidade Federal de Minas Gerais, Belo Horizonte - MG, Brazil}
\address[mysecondaryaddress]{Department of Mathematics, CCN, Universidade Federal do Piau\'i, Teresina - PI, Brasil}

\begin{abstract}
We consider the inhomogeneous nonlinear Schr\"odinger (INLS) equation in $\mathbb{R}^N$
\begin{align}\label{inls}
i \partial_t u +\Delta u +|x|^{-b} |u|^{2\sigma}u = 0,
\end{align}
where $N\geq 3$, $0<b<\min\left\{\frac{N}{2},2\right\}$ and $\frac{2-b}{N}<\sigma<\frac{2-b}{N-2}$. The scaling invariant Sobolev space is $\dot{H}^{s_c}$ with $s_c=\frac{N}{2}-\frac{2-b}{2\sigma}$. The restriction on $\sigma$ implies $0<s_c<1$ and the equation is called intercritical (i.e. mass-supercritical and energy-subcritical). Let $u_0\in \dot H^{s_c}\cap \dot H^1$ be a radial initial data and $u(t)$ the corresponding solution to the INLS equation. We first show that if $E[u_0]\leq 0$, then the maximal time of existence of the solution $u(t)$ is finite. Also, for all radially symmetric solution of the INLS equation with finite maximal time of existence $T^{\ast}>0$, then $\limsup_{t\rightarrow T^{\ast}}\|u(t)\|_{\dot H^{s_c}}=+\infty$. Moreover, under an additional assumption and recalling that $\dot{H}^{s_c} \subset L^{\sigma_c}$ with $\sigma_c=\frac{2N\sigma}{2-b}$, we can in fact deduce, for some $\gamma=\gamma(N,\sigma,b)>0$, the following lower bound for the blow-up rate
\begin{align*}
c\|u(t)\|_{\dot H^{s_c}}\geq \|u(t)\|_{L^{\sigma_c}}\geq |\log (T-t)|^{\gamma},\,\,\,\textnormal{as}\,\,\,t\rightarrow T^{\ast}.
\end{align*}
The proof is based on the ideas introduced for the $L^2$ super critical nonlinear Schr\"odinger equation in the work of Merle and Rapha\"el \cite{MR_Bsc} and here we extend their results to the INLS setting.
\end{abstract}

\begin{keyword}
Inhomogeneous NLS equation \sep Intercritical regime\sep Blow-up
\MSC[2010] 35Q55\sep  35B44
\end{keyword}

\end{frontmatter}

\newcommand{\Real}{\mathbb R}
\newtheorem{thm}{Theorem}[section]
\newtheorem{prop}[thm]{Proposition}
\newtheorem{lemma}[thm]{Lemma}
\newtheorem{rem}[thm]{Remark}
\newtheorem{definition}[thm]{Definition}
\newtheorem{coro}[thm]{Corollary}
\numberwithin{equation}{section}
\newenvironment{proof}{\paragraph{\bf{Proof:}}}{\hfill$\square$}


\section{Introduction}
In this work we consider the initial value problem (IVP) for the inhomogeneous nonlinear Schr\"odinger (INLS) equation
\begin{equation}
\begin{cases}
i \partial_t u + \Delta u + |x|^{-b} |u|^{2 \sigma}u = 0, \,\,\, x \in \mathbb{R}^N, \,t>0,\\
u(0) = u_0,
\end{cases}
\label{PVI}
\end{equation}
for $N\geq 3$, $0<b<\min\left\{\frac{N}{2},2\right\}$ and $\frac{2-b}{N}<\sigma<\frac{2-b}{N-2}$. Note that the case $b = 0$ is the classical nonlinear Schr\"odinger (NLS) equation. The physical relevance of the INLS model \eqref{PVI}  appears naturally in nonlinear optics, we refer the reader to \citet{GILL} and \citet{LIU} for more details.

Throughout this manuscript, we will work with the standard $L^2$-based Sobolev spaces $H^{s}=H^{s}(\mathbb{R}^N)$ and $\dot{H}^{s}=\dot{H}^{s}(\mathbb{R}^N)$, for $s\in \mathbb{R}$, equipped with the norm $\|f\|_{H^{s}}:=\|(1+|\xi|^2)^{\frac{s}{2}}\widehat{f}\|_{L^2}$ and $\|f\|_{\dot{H}^{s}}:=\||\xi|^s\widehat{f}\|_{L^2}$, respectively. In particular, $\dot{H}^{0}={H}^{0}=L^2$. The $\dot{H}^1$ flow admits the energy conservation law given by
\begin{equation}\label{Energy}
E[u(t)] =\frac12 \int  |\nabla u(x,t)|^2\, dx - \frac{1}{2\sigma+2}\int |x|^{-b}|u(x,t)|^{2\sigma+2}\,dx=E[u_0]. 
\end{equation}
Another fundamental conserved quantity for the $L^2$ flow is the mass conservation
\begin{equation}\label{Mass}
M[u(t)] =\int  |u(x,t)|^2\, dx =M[u_0]. 
\end{equation}
Moreover, the scaling symmetry $u(x,t)\mapsto \lambda^{\frac{2-b}{2\sigma}}u(\lambda x, \lambda^2 t)$ plays an important role in our analysis, since it leaves invariant the norm in the homogeneous Sobolev space $\dot{H}^{s_c}$, where $s_c=\frac{N}{2}-\frac{2-b}{2\sigma}$. If $s_c = 0$ (alternatively $\sigma = \frac{2-b}{N}$) the problem is mass-critical and if $s_c=1$ (alternatively $\sigma =\frac{2-b}{N-2}$) it is energy-critical. Finally the problem is mass-supercritical and energy-subcritical or just intercritical if $0<s_c<1$ (alternatively $\frac{2-b}{N}<\sigma<\frac{2-b}{N-2}$).

The well-posedness for the Cauchy problem \eqref{PVI} has been receiving increasing attention over the past years, see for instance \citet{GENSTU, CARLOS, Dinh4, LeeSeo}. Recently, in a joint work with Guzm\'an \cite{CFG20}, the authors studied this  problem in the space $\dot H^{s_c}\cap \dot H^1$, showing local well posedness for $N\geq 3$, $0<b<\min\left\{\frac{N}{2},2\right\}$ and $\frac{2-b}{N}<\sigma<\frac{2-b}{N-2}$. More precisely, it was proved that for all $u_0\in \dot H^{s_c}\cap\dot H^1 $, there exist $T(\|u_0\|_{\dot H^{s_c}\cap\dot H^1})>0$ and a unique solution $u\in C([0,T);\dot H^{s_c}\cap \dot H^1)$ of \eqref{PVI}. Furthermore, let $T^{\ast}>0$ denotes the maximal time of existence for this solution, if $T^{\ast}<+\infty$, then there exist $c, \theta>0$ such that  
\begin{align}\label{bounded}
\|u(t)\|_{\dot H^{s_c}\cap \dot H^1}\geq\frac{c}{(T^{\ast}-t)^{\theta}}.
\end{align}

The existence of solutions with finite maximal time of existence is already known in $H^1$ for the INLS model \eqref{PVI}. As for the classical NLS equation this is a consequence of the following virial identity satisfied by solutions to \eqref{PVI} with initial data $u_0\in \Sigma:=\{f\in H^1;\,\,|x|f\in L^2 \}$
\begin{align}\label{VI}
\frac{d^2}{dt}\int |x|^2|u(x,t)|^2=8(2\sigma s_c+2)E[u_0]-8\sigma s_c\|\nabla u(t)\|_{L^2}^2.
\end{align}
This was obtained by the second author in \cite{Farah}, following the approach developed by \citet{HRasharp} in their study of the intercritical classical NLS equation (case $b=0$, $N\geq 3$ and $\frac{2}{N}<\sigma<\frac{2}{N-2}$). Later, \citet{dinh2017blowup} extended this result assuming radial initial data $u_0\in H^1$, using the ideas of \citet{ogawa1991blow}.

In the present work we are interested in solutions of \eqref{PVI} with initial data in $\dot H^{s_c}\cap \dot H^1$ and finite maximal time of existence. More precisely, first we establish sufficient conditions for the existence of such solutions. In a second step, we investigate the behavior of its $\dot H^{s_c}$ norm. Note that since $\|u\|_{\dot H^{s_c}\cap\dot H^1}=\|u\|_{\dot H^{s_c}}+\|u\|_{\dot H^1}$ this last goal is not a direct consequence of the inequality \eqref{bounded}.

For the intercritical classical NLS equation, this question was studied by \citet{MR_Bsc}. They showed that radially symmetric initial data with non-positive energy is a sufficient condition to deduce the finite time blow-up of the $\dot H^{s_{0}}$ norm, with $s_{0}=\frac{N}{2}-\frac{1}{\sigma}$. Furthermore, recalling that $\dot{H}^{s_{0}} \subset L^{\sigma_{0}}$ with $\sigma_{0}=N\sigma$, they in fact proved the following lower bound for the blow-up rate
\begin{align}
c\|u(t)\|_{\dot H^{s_{0}}}\geq \|u(t)\|_{L^{\sigma_{0}}}\geq |\log (T^{\ast}-t)|^{C_{N,\sigma}},\,\,\,\mbox{as}\,\,\,t\to T^{\ast}.
\end{align}
Note that this type of result breaks down in the $L^2$ critical case ($\sigma=\frac{2}{N}$ and $s_{0}=0$), since the $L^2$ norm is conserved.

Inspired by the work of \citet{MR_Bsc} our aim in this paper is to extend their results to the INLS setting. In the first theorem, we show the existence of non-positive energy solutions with finite maximal time of existence. In particular, this result generalizes the results obtained by the second author in \cite{Farah} and \citet{dinh2017blowup} to the regularity $\dot H^{s_c}\cap \dot H^1$. More precisely, we prove the following.
\begin{thm}\label{scteo1} Let $N\geq 3$, $0<b<\min\{\frac{N}{2},2\}$ and $\frac{2-b}{N}<\sigma<\frac{2-b}{N-2}$. If $u_0\in \dot H^{s_c} \cap\dot H^1 $ is radially symmetric and $E(u_0)\leq 0,$ then the maximal time of existence $T^{\ast}>0$ of the corresponding solution $u(t)$ to \eqref{PVI} is finite.
\end{thm}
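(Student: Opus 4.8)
The plan is to run the classical convexity (virial) argument of Glassey, in the \emph{localized} form of Ogawa–Tsutsumi and along the lines of Merle–Raphaël \cite{MR_Bsc}, but adapted to the inhomogeneous nonlinearity and to the low‑regularity class $\dot H^{s_c}\cap\dot H^1$, in which neither the weight $|x|^2$ nor the mass is available.

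First I would introduce a localized virial functional. Fix a smooth radial $\phi\colon\Real^N\to[0,\infty)$ with $\phi(x)=|x|^2$ for $|x|\le 1$, $\phi$ supported in $|x|\le M$ for a fixed large $M$, $\phi''\le 2$, $|\nabla\phi|\lesssim |x|$ and $|\partial^\alpha\phi|\lesssim 1$ for $|\alpha|\ge 2$; set $\phi_R(x)=R^2\phi(x/R)$ and $z_R(t)=\int\phi_R\,|u(x,t)|^2\,dx$. This is finite because $\dot H^{s_c}\cap\dot H^1\hookrightarrow L^{\sigma_c}$ with $\sigma_c>2$, hence $u\in L^2_{loc}$, and $\phi_R$ has compact support. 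The inhomogeneous analogue of the Morawetz computation gives, for radial $u$ (so the Hessian term collapses to $\phi_R''|\nabla u|^2$),
\[
z_R''(t)=4\int\phi_R''\,|\nabla u|^2-\int\Delta^2\phi_R\,|u|^2-\frac{2\sigma}{\sigma+1}\int|x|^{-b}\Delta\phi_R\,|u|^{2\sigma+2}+\frac{2}{\sigma+1}\int\nabla\phi_R\cdot\nabla(|x|^{-b})\,|u|^{2\sigma+2}.
\]
Where $\phi_R(x)=|x|^2$ the right‑hand side is exactly the integrand of the rigid virial identity \eqref{VI}, namely $8(2\sigma s_c+2)E[u_0]-8\sigma s_c\|\nabla u(t)\|_{L^2}^2$, so I would write $z_R''(t)$ as this main term plus an error $\mathcal E_R(t)$ supported in $\{|x|\ge R\}$.

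Next I would estimate $\mathcal E_R(t)$. The gradient contribution $4\int(\phi_R''-2)|\nabla u|^2$ is $\le 0$ since $\phi''\le 2$. For the $\Delta^2\phi_R$ term I would use $|\Delta^2\phi_R|\lesssim R^{-2}$ and the embedding $\dot H^{s_c}\hookrightarrow L^{\sigma_c}$, which yields $\int_{|x|\sim R}|u|^2\lesssim R^{2s_c}\|u\|_{\dot H^{s_c}}^2$ and hence a contribution $\lesssim R^{2s_c-2}\|u\|_{\dot H^{s_c}}^2$ with $2s_c-2<0$. The two nonlinear terms are bounded by $\int_{|x|\gtrsim R}|x|^{-b}|u|^{2\sigma+2}$, and here the substitute for mass conservation is the radial Sobolev inequality $|u(x)|\lesssim |x|^{-(N-2)/2}\|\nabla u\|_{L^2}$ (valid for radial $\dot H^1$ functions, $N\ge 3$): extracting a suitable power of $|u|$ this way and controlling the remaining power by $L^{\sigma_c}$ (or an intermediate Lebesgue norm) turns the tail integral into $\lesssim R^{-\mu}\|\nabla u\|_{L^2}^{2\rho}\|u\|_{\dot H^{s_c}}^{2}$ with $\mu>0$ and $\rho<1$, and a Young inequality absorbs a small multiple $\epsilon\|\nabla u\|_{L^2}^2$. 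Together with $E[u_0]\le 0$ this produces $z_R''(t)\le -4\sigma s_c\|\nabla u(t)\|_{L^2}^2+o_R(1)\bigl(1+\|u(t)\|_{\dot H^{s_c}}^{p}\bigr)$, with $o_R(1)\to 0$ as $R\to\infty$.

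Finally I would conclude by convexity: since $E[u_0]\le 0$ and $u\not\equiv 0$, Gagliardo–Nirenberg forces $\|u(t)\|_{\dot H^{s_c}}\gtrsim 1$ and (when $E[u_0]<0$) $\|\nabla u(t)\|_{L^2}^2\gtrsim |E[u_0]|/\|u(t)\|_{\dot H^{s_c}}^{2\sigma}$, so for $R$ large $z_R$ is a nonnegative function with strictly negative second derivative, which is impossible on an infinite time interval; the case $E[u_0]=0$ is handled similarly after checking that $\|\nabla u(t)\|_{L^2}$ cannot decay to zero. The hardest part is exactly this interplay between the nonlinear error estimate and the convexity step: because the $L^2$ norm is not available (indeed $u$ need not lie in $L^2$), both the control of $\int_{|x|\gtrsim R}|x|^{-b}|u|^{2\sigma+2}$ throughout the whole intercritical range — notably near the energy‑critical threshold, where $\dot H^{s_c}\cap\dot H^1$ fails to embed into $L^{2\sigma+2}$ — and the coercivity needed to make the convexity argument uniform in time must be extracted from $\dot H^{s_c}\cap\dot H^1$ alone; this is where radial symmetry is indispensable and where the adaptation of the Merle–Raphaël scheme to the inhomogeneous setting requires the most care, typically through a continuity/bootstrap argument rather than a single convexity estimate.
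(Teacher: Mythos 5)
Your first two steps (the localized virial identity for radial solutions and the control of the exterior terms by radial-type interpolation) do parallel the paper's Lemma \ref{lemintradial} and Lemma \ref{lemaradialGN}-(ii), but the third step contains a genuine gap: the convexity argument cannot close in $\dot H^{s_c}\cap\dot H^1$. Your error bound has the form $o_R(1)\bigl(1+\|u(t)\|_{\dot H^{s_c}}^{p}\bigr)$ (equivalently, it is controlled by $\|u(t)\|_{L^{\sigma_c}}$ or by the Morrey--Campanato quantity $\rho(u(t),R)$), and none of these quantities is conserved or a priori bounded in time -- there is no mass conservation in this class, and the $\dot H^{s_c}$ norm is exactly the quantity that is expected to blow up. Hence fixing $R$ large does not make the error uniformly small on $[0,\infty)$, and you never obtain $z_R''(t)\leq -c<0$ uniformly; likewise your lower bound $\|\nabla u(t)\|_{L^2}^2\gtrsim |E[u_0]|/\|u(t)\|_{L^{\sigma_c}}^{2\sigma}$ degenerates if $\|u(t)\|_{L^{\sigma_c}}$ grows, and for $E[u_0]=0$ the claim that one can ``check that $\|\nabla u(t)\|_{L^2}$ cannot decay to zero'' is unsubstantiated -- in fact, in the paper's contradiction scenario the global dispersive estimate \eqref{prop1i} forces $\|\nabla u(\tau_n)\|_{L^2}\to 0$ along a sequence, so decay of the gradient is not by itself a contradiction. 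The bootstrap you mention in passing is precisely the missing substance: it is carried out in Section 4 (the sets $S_\varepsilon$, Lemmas \ref{lemma34} and \ref{lemimint}), where the virial inequality is integrated twice in time and the initial virial flux and $\rho(v(\tau),A\sqrt\tau)$ are controlled self-consistently, yielding Propositions \ref{prop1} and \ref{prop2}.

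Moreover, the paper's endgame is not a concavity contradiction at all. For a hypothetical global solution with $E[u_0]\leq 0$, the dispersive bound \eqref{prop1i} produces times $\tau_n\to\infty$ with $\lambda_u(\tau_n)\gtrsim\sqrt{\tau_n}\to\infty$, Proposition \ref{prop2} gives the mass-concentration lower bound \eqref{tprop2} for the \emph{initial data} at scale $D_*\lambda_u(\tau_n)$ with a constant independent of $n$, and this contradicts the decay property \eqref{radial2} of the fixed function $u_0\in L^{\sigma_c}$. Without this (or an equivalent mechanism replacing the uniform-in-time smallness that mass conservation provides in the $H^1$ theory), your argument as written does not prove the theorem.
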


The next result describes the behavior of the $\dot H^{s_c}$ norm for any solution of \eqref{PVI} with initial data in $\dot H^{s_c}\cap \dot H^1$ and finite maximal time of existence.
\begin{thm}\label{scteo2}
	Let $s_c=\frac{N}{2}-\frac{2-b}{2\sigma}$ and $\sigma_c=\frac{2N\sigma}{2-b}$ such that $\dot{H}^{s_c} \subset L^{\sigma_c}$. Assume $N\geq 3$, $0<b<\min\left\{\frac{N}{2},2\right\}$ and $\frac{2-b}{N}<\sigma<\frac{2-b}{N-2}$. Given $u_0\in \dot H^{s_c}\cap \dot H^1$ radially symmetric and assume that the maximal time of existence $T^{\ast}>0$ of the corresponding solution $u$ to \eqref{PVI} is finite. Assume that 
\begin{align}\label{esttl}
\left\|\nabla u(t)\right\|_{L^2}\geq \frac{c}{(T^{\ast}-t)^{\frac{1-s_c}{2}}},
\end{align}
for some constant $c=c(N,\sigma)$ and $t$ close enough to $T^{\ast}$. Then there exists $\gamma=\gamma(N,\sigma,b)>0$ such that
	\begin{align}\label{rate}
	c\|u(t)\|_{\dot H^{s_c}}\geq \|u(t)\|_{L^{\sigma_c}}\geq |\log (T^{\ast}-t)|^{\gamma},\,\,\,\,as\,\,t\to T^{\ast}.
	\end{align}
\end{thm}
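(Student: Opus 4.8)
The plan is to follow the strategy of Merle and Rapha\"el \cite{MR_Bsc} adapted to the inhomogeneous setting, combining a localized virial-type estimate with a dispersive/Duhamel bound on $\|u(t)\|_{L^{\sigma_c}}$ near the blow-up time. The first step is to set up a self-similar rescaling: for $t$ close to $T^{\ast}$ define $\lambda(t):=\|\nabla u(t)\|_{L^2}^{-1}$ (or a comparable scale-setting quantity) and renormalize $u$ at scale $\lambda(t)$. Using the local well-posedness theory in $\dot H^{s_c}\cap\dot H^1$ from \cite{CFG20} together with the blow-up alternative \eqref{bounded}, I would extract, for each time $t$ near $T^\ast$, an interval of length comparable to $\lambda(t)^2$ on which the rescaled solution enjoys uniform-in-$t$ Strichartz bounds. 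The lower bound \eqref{esttl} on $\|\nabla u(t)\|_{L^2}$ is exactly what is needed to relate $\lambda(t)^2$ to $T^{\ast}-t$, giving $\lambda(t)^2 \lesssim (T^{\ast}-t)^{1-s_c}$ up to constants; this is the hypothesis that makes the quantitative rate in \eqref{rate} possible.

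The second step is a Duhamel/bootstrap argument on a dyadic sequence of times $t_k\uparrow T^{\ast}$ (say $T^{\ast}-t_k\sim 2^{-k}$). On each such time-slab one writes $u(t_{k+1})=e^{i(t_{k+1}-t_k)\Delta}u(t_k)+i\int_{t_k}^{t_{k+1}}e^{i(t_{k+1}-s)\Delta}\big(|x|^{-b}|u|^{2\sigma}u\big)\,ds$, estimates the nonlinear term in the relevant Strichartz norm using the inhomogeneous nonlinearity bounds (exploiting $\dot H^{s_c}\subset L^{\sigma_c}$ and the admissible range $0<b<\min\{N/2,2\}$, with radial Sobolev embedding to absorb the $|x|^{-b}$ weight as in \cite{Farah,dinh2017blowup}), and thereby shows $\|u\|_{L^{\sigma_c}}$ cannot drop too quickly: concretely, $\|u(t_{k+1})\|_{L^{\sigma_c}}\gtrsim \|u(t_k)\|_{L^{\sigma_c}} - (\text{error controlled by }\lambda(t_k))$, or rather that a geometric-type sum of the $L^{\sigma_c}$ norms over consecutive slabs is bounded below. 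Iterating over the $k$ slabs between a fixed time and $t$ and summing the telescoping contributions produces the logarithmic growth $\|u(t)\|_{L^{\sigma_c}}\gtrsim k^{\gamma}\sim |\log(T^{\ast}-t)|^{\gamma}$, with $\gamma$ depending only on how the Strichartz exponents and the weight exponent $b$ enter the nonlinear estimate. The first inequality in \eqref{rate}, $c\|u(t)\|_{\dot H^{s_c}}\geq\|u(t)\|_{L^{\sigma_c}}$, is just the Sobolev embedding $\dot H^{s_c}\subset L^{\sigma_c}$.

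The main obstacle I anticipate is controlling the inhomogeneous nonlinearity $|x|^{-b}|u|^{2\sigma}u$ uniformly under rescaling: unlike the pure-power NLS case, the weight $|x|^{-b}$ is not scale invariant in a way that commutes cleanly with the renormalization, and near the origin it must be handled by the radial Sobolev/Strauss-type inequality, which degrades the available decay and forces a careful split into $|x|\lesssim\lambda(t)$ and $|x|\gtrsim\lambda(t)$ regions. Getting the errors in the Duhamel iteration to be summable (so that the geometric losses at each step do not destroy the lower bound) is where the restriction $\frac{2-b}{N}<\sigma<\frac{2-b}{N-2}$ and the radial assumption are really used, and tracking the exact power $\gamma=\gamma(N,\sigma,b)$ through this split is the most delicate bookkeeping. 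A secondary technical point is justifying the Strichartz theory at the critical regularity $\dot H^{s_c}$ on short time intervals with constants independent of $t$, which should follow from the small-data/continuity arguments already in \cite{CFG20} but needs to be stated carefully.
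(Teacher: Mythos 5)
There is a genuine gap: the mechanism you propose does not actually produce a lower bound on the critical norm. Your central step is a Duhamel/Strichartz iteration on dyadic time slabs showing that $\|u(t_{k+1})\|_{L^{\sigma_c}}\gtrsim\|u(t_k)\|_{L^{\sigma_c}}-\text{error}$, i.e.\ that the $L^{\sigma_c}$ norm ``cannot drop too quickly''; but such an estimate is perfectly consistent with $\|u(t)\|_{L^{\sigma_c}}$ staying bounded up to $T^{\ast}$, which is exactly the scenario the theorem must exclude. Nothing in the telescoping sum injects growth: a bound of the form ``norm at step $k+1$ is at least norm at step $k$ minus a summable error'' gives at best a constant lower bound, never $k^{\gamma}$. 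Moreover, any argument that does not use the intercritical structure $s_c>0$ and the radial geometry in an essential quantitative way is doomed, since in the mass-critical case ($s_c=0$) the critical norm is conserved and no such lower bound can hold; your scheme, as described, would apply there just as well. A secondary issue is that you invoke a uniform critical Strichartz/local theory at $\dot H^1$-type regularity, while the paper explicitly points out that no $\dot H^1$ Cauchy theory is available for INLS; the hypothesis \eqref{esttl} is there precisely to substitute for it, namely to guarantee (Lemma \ref{vteo2}) that the renormalized solution $v^{(t)}$ lives on an interval of length $\gtrsim 1/\lambda_u(t)$ and that $N(t)=-\log\lambda_u(t)\gtrsim|\log(T^{\ast}-t)|$.

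The paper's proof obtains the growth from a multi-scale spatial concentration argument rather than from time-slab iteration. After renormalizing (Lemma \ref{vteo2}), Proposition \ref{prop1} gives a time-averaged dispersive bound \eqref{prop1i} and a uniform control of the Morrey--Campanato quantity $\rho$, both coming from the localized virial estimate combined with the radial Gagliardo--Nirenberg inequality of Lemma \ref{lemaradialGN} (this is where radiality is really used, not merely to absorb $|x|^{-b}$). Proposition \ref{prop2} then converts non-positive-relative-energy at a given time into a fixed amount of $L^2$ mass of the \emph{initial} data $v(0)$ in a ball of radius $\sim\lambda_v(\tau_0)$. The key point is that \eqref{prop1i} forces, for each $i\in[\sqrt{N(t)},N(t)]$, a time $\tau_i\le e^{i}$ at which $\lambda_v(\tau_i)\sim e^{i/2}$ up to the controlled factor $L(t)$; applying Proposition \ref{prop2} at each such time yields a uniform lower bound \eqref{teste} for $\|v(0)\|_{L^{\sigma_c}}$ on annuli $\mathcal{C}_i$ at exponentially separated radii, which are pairwise disjoint. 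Summing over the $\sim\sqrt{N(t)}$ disjoint annuli gives $\|v(0)\|_{L^{\sigma_c}}^{\sigma_c}\gtrsim\sqrt{N(t)}\,\|v(0)\|_{L^{\sigma_c}}^{-\alpha_4\sigma_c s_c}$, hence \eqref{N1} and the rate \eqref{rate}. If you want to salvage your outline, you would need to replace the Duhamel bootstrap by this concentration-at-many-scales input (or an equivalent quantitative lower-bound mechanism at each scale); the first inequality in \eqref{rate} is, as you say, just the embedding $\dot H^{s_c}\subset L^{\sigma_c}$.
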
 

The condition \eqref{esttl} is a natural assumption. It is automatically satisfied for the classical NLS equation from the local Cauchy theory in $\dot{H}^{1}$ obtained by \citet{CaWe89} (see the Introduction in \citet{MR_Bsc}). Due to the lack of a local Cauchy theory in $\dot{H}^{1}$ for the INLS model we include this assumption on the statement of Theorem \ref{scteo2}. Moreover, assuming additionally that the initial data $u_0\in H^1\subset \dot H^{s_c}\cap \dot H^1$, the lower bound \eqref{esttl} has been obtained in a recent work by \citet{AT21}. 

The proofs of Theorems \ref{scteo1} and \ref{scteo2} follow the approach introduced by \citet{MR_Bsc}. The first obstacle here is to control the term $|x|^{-b}$ present in the potential part of the energy (see Lemma \ref{lemaradialGN}-$(ii)$). After that, we carry out a careful study to understand how the introduction of the parameter $b$ will affect the rest of the analysis. 

More precisely, to prove Theorem \ref{scteo1}, we first observe that for all functions in $L^{\sigma_c}$ the $L^2$ norm over balls around the origin $\{ |x|\leq R\}$ grows slower than $R^{s_c}$, as $R\to \infty$ (see Lemma Lemma \ref{lemaradialGN}-$(i)$). On the other hand, for a radial solutions with non-positive energy, there exists a radius $R(t)$ (depending on time $t$) such that the $L^2$ norm of the initial data over the ball $\{ |x|\leq R(t)\}$ is at least $cR(t)^{s_c}$ for some universal constant $c>0$ (see Proposition \ref{prop2}-\eqref{tprop2}). Moreover, for a global radial solution with non-positive energy, in view of a global dispersive estimate (see Proposition \ref{prop1}-\eqref{prop1i}), there exists a sequence of times $\{t_n\}_{n=1}^{\infty}$ growing to infinity such that the $L^2$ norm of the gradient goes to zero. This implies that $R(t_n)\to \infty$, reaching a contradiction. 

The proof of Theorem \ref{scteo2} requires a more refined analysis. Indeed, first consider a renormalization $v^{(t)}(\tau)$ of the original solution such that, assuming \eqref{esttl}, exists for large times and where we can apply Propositions \ref{prop1} and  \ref{prop2} uniformly (see Lemma \ref{vteo2}). In this new setting, the problem is reduced to show an specific lower bound on the $L^{\sigma_c}$ norm of the initial data $v^{(t)}(0)$ (see inequality \eqref{N1}). Then, in the worst case scenario, an uniform lower bound for this norm restricted to suitable annulus on space holds (see inequality \eqref{teste}). Finally, summing over a family of disjoint annuli yields the desired result.

The next result is a direct consequence of Theorem \ref{scteo2}.
\begin{coro}\label{cor13}
Let $s_c=\frac{N}{2}-\frac{2-b}{2\sigma}$. Assume $N\geq 3$, $0<b<\min\left\{\frac{N}{2},2\right\}$ and $\frac{2-b}{N}<\sigma<\frac{2-b}{N-2}$. Given $u_0\in \dot H^{s_c}\cap \dot H^1$ radially symmetric and assume that the maximal time of existence $T^{\ast}>0$ of the corresponding solution $u$ to \eqref{PVI} is finite, then
\begin{equation}\label{Bup}
\limsup_{t\rightarrow T^{\ast}}\|u(t)\|_{\dot H^{s_c}}=+\infty.
\end{equation}
\end{coro}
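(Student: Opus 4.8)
The plan is to derive Corollary \ref{cor13} from Theorem \ref{scteo2} by a short contradiction argument. Suppose the solution $u$ has finite maximal time of existence $T^{\ast}>0$ but \eqref{Bup} fails, i.e.\ there exists a constant $M>0$ such that $\|u(t)\|_{\dot H^{s_c}}\leq M$ for all $t\in[0,T^{\ast})$. The strategy is to show that this bound forces the hypothesis \eqref{esttl} of Theorem \ref{scteo2} to hold, so that \eqref{rate} applies; but \eqref{rate} combined with the assumed uniform bound $\|u(t)\|_{\dot H^{s_c}}\leq M$ is impossible, since $|\log(T^{\ast}-t)|^{\gamma}\to+\infty$ as $t\to T^{\ast}$ while the left-hand side of \eqref{rate} stays bounded by $cM$.

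To close the argument I need to verify that a uniform bound on $\|u(t)\|_{\dot H^{s_c}}$ does indeed imply the lower bound \eqref{esttl} on $\|\nabla u(t)\|_{L^2}$. The key input here is the local well-posedness blow-up alternative \eqref{bounded} from \cite{CFG20}, which says that if $T^{\ast}<\infty$ then $\|u(t)\|_{\dot H^{s_c}\cap\dot H^1}=\|u(t)\|_{\dot H^{s_c}}+\|u(t)\|_{\dot H^1}\geq c(T^{\ast}-t)^{-\theta}$ for some $c,\theta>0$. Under the assumption $\|u(t)\|_{\dot H^{s_c}}\leq M$, for $t$ close enough to $T^{\ast}$ the term $c(T^{\ast}-t)^{-\theta}$ exceeds $2M$, so the inequality \eqref{bounded} forces $\|\nabla u(t)\|_{L^2}=\|u(t)\|_{\dot H^1}\geq \tfrac{c}{2}(T^{\ast}-t)^{-\theta}$. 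This already yields a polynomial blow-up rate of the gradient; to match the specific exponent in \eqref{esttl} one observes that, since $\|\nabla u(t)\|_{L^2}\to\infty$, a Gagliardo--Nirenberg type interpolation (using $\|u(t)\|_{\dot H^{s_c}}\leq M$) gives a better lower bound, or more simply one invokes that for any finite-time blow-up solution in $\dot H^{s_c}\cap\dot H^1$ the gradient necessarily satisfies \eqref{esttl}; this last fact is the content of the classical lower bound argument recalled in the discussion after Theorem \ref{scteo2}, and under the additional hypothesis $u_0\in H^1$ it is exactly the result of \cite{AT21}.

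The main obstacle, then, is the logical structure rather than any hard estimate: one must be careful that the hypothesis \eqref{esttl} needed by Theorem \ref{scteo2} is genuinely available, and not itself circularly dependent on the conclusion \eqref{Bup} we are trying to prove. The cleanest route is to note that if $\|u(t)\|_{\dot H^{s_c}}$ were bounded, then \eqref{bounded} would force $\|\nabla u(t)\|_{L^2}$ to blow up at least at a polynomial rate, which is more than enough to invoke \eqref{esttl} in the form stated (a lower bound with exponent $\tfrac{1-s_c}{2}$), and hence to apply Theorem \ref{scteo2}; the resulting lower bound \eqref{rate} then contradicts the boundedness of $\|u(t)\|_{\dot H^{s_c}}$. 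Therefore \eqref{Bup} must hold, which proves the corollary.
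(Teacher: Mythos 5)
Your overall structure (argue by contradiction, show that a uniform bound on $\|u(t)\|_{\dot H^{s_c}}$ forces \eqref{esttl}, then apply Theorem \ref{scteo2} and let \eqref{rate} contradict the bound) is the same as the paper's, but the step where you obtain \eqref{esttl} has a genuine gap. The blow-up alternative \eqref{bounded} only gives $\|\nabla u(t)\|_{L^2}\geq \tfrac{c}{2}(T^{\ast}-t)^{-\theta}$ for some unspecified $\theta>0$; if $\theta<\tfrac{1-s_c}{2}$ this is a \emph{weaker} lower bound than \eqref{esttl}, so it is not ``more than enough'' to invoke the hypothesis of Theorem \ref{scteo2}. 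Your two fallbacks do not close this: interpolation with $\|u(t)\|_{\dot H^{s_c}}\leq M$ controls intermediate norms from above, it does not upgrade a lower bound on the gradient; and the claim that ``for any finite-time blow-up solution in $\dot H^{s_c}\cap \dot H^1$ the gradient necessarily satisfies \eqref{esttl}'' is exactly what is \emph{not} known for the INLS equation --- the paper states explicitly that \eqref{esttl} is assumed in Theorem \ref{scteo2} precisely because of the lack of an $\dot H^1$ local Cauchy theory, and the result of \cite{AT21} requires $u_0\in H^1$, which is not assumed in the corollary.

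The missing idea is a scaling argument that exploits the assumed bound itself. Under the contradiction hypothesis $\sup_t\|u(t)\|_{\dot H^{s_c}}=M<\infty$, set $\rho(t)=\|\nabla u(t)\|_{L^2}^{-\frac{1}{1-s_c}}$ and consider $v^{(t)}(x,\tau)=\rho^{\frac{2-b}{2\sigma}}(t)\,u(\rho(t)x,\,t+\rho^2(t)\tau)$. By scaling invariance, $\|v^{(t)}(0)\|_{\dot H^{s_c}}=\|u(t)\|_{\dot H^{s_c}}\leq M$ and $\|\nabla v^{(t)}(0)\|_{L^2}=1$, so the $\dot H^{s_c}\cap\dot H^1$ norm of the rescaled data is bounded uniformly in $t$. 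Since the local existence time in \cite{CFG20} depends only on this norm, there is a $\tau_0>0$ independent of $t$ such that $v^{(t)}$ exists on $[0,\tau_0]$, hence $t+\rho^2(t)\tau_0<T^{\ast}$, i.e. $\rho^2(t)\leq (T^{\ast}-t)/\tau_0$, which is precisely $\|\nabla u(t)\|_{L^2}\geq c\,(T^{\ast}-t)^{-\frac{1-s_c}{2}}$, the estimate \eqref{esttl} with the correct exponent. From there your contradiction via \eqref{rate} goes through as you describe. Without this (or an equivalent) mechanism producing the sharp exponent $\tfrac{1-s_c}{2}$, the appeal to Theorem \ref{scteo2} is unjustified.
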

This is in sharp contrast to the $L^2$-critical case ($s_c = 0$), where the mass is preserved, see \eqref{Mass}, and thus \eqref{Bup} cannot occur. So, we extend the situation observed by \citet{MR_Bsc} for the classic NLS equation to the INLS model, for radially symmetric initial data. Moreover, the assumption \eqref{esttl} is only needed to deduce the lower bound on the blow-up rate \eqref{rate} and is not required in Corollary \ref{cor13}.

In our last result, if we assume in addition that $u_0\in H^1 $, we obtain an upper bound on blow-up rate for radially symmetric initial data.
\begin{thm}\label{thmblowdisper}
	Let $N\geq 3$, $0<b<\min\left\{\frac{N}{2},2\right\}$ and $\frac{2-b}{N}<\sigma<\frac{2-b}{N-2}$. Let $u_0\in H^1 $ with radial symmetry and assume that the maximal time of existence $T^{\ast}>0$ for the corresponding solution $u\in C([0,T^{\ast}):H^1 )$ of \eqref{PVI} is finite. Define $\beta=\frac{2-\sigma}{\sigma(N-1)+b}$, then the following space-time upper bound holds
	\begin{align}\label{blowupestdisp}
	\int_{t}^{T^{\ast}}(T^{\ast}-\tau)\|\nabla u(\tau)\|_{L^2}^2\,d\tau\leq C_{u_0}(T^{\ast}-t)^{\frac{2\beta}{1+\beta}},
	\end{align}
for $t$ close enough to $T^{\ast}$.
\end{thm}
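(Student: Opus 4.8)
The engine of the proof is a localized virial quantity. The plan is to fix a radial cut-off $\psi\in C^{\infty}(\mathbb{R}^N)$ with $\psi(x)=|x|^2$ for $|x|\le 1$, $\psi$ constant for $|x|\ge 2$ and $\psi''\le 2$, to set $\psi_R(x)=R^2\psi(x/R)$ and $z_R(t)=\int\psi_R(x)|u(x,t)|^2\,dx$ for $R>0$, and to run the virial computation behind \eqref{VI} on $\psi_R$ instead of $|x|^2$. Carried out for the INLS nonlinearity, that computation produces, besides the usual terms, one extra term coming from $\nabla(|x|^{-b})$ hitting $\nabla\psi_R$ — this is the first place where the inhomogeneity must be handled. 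Using $\psi_R''\le 2$, the fact that $\Delta\psi_R$, $\Delta^2\psi_R$ and $|x|^{-1}\psi_R'$ are bounded and equal to their ``$|x|\le R$'' values on the ball, together with the energy identity \eqref{Energy} and mass conservation, I would reduce everything to the pointwise differential inequality
\begin{align*}
z_R''(t)\ \le\ -8\sigma s_c\,\|\nabla u(t)\|_{L^2}^2+C_E+C_1\int_{|x|>R}|x|^{-b}|u(x,t)|^{2\sigma+2}\,dx+\frac{C_2}{R^{2}},
\end{align*}
with $C_E=C_E(E[u_0],N,\sigma,b)$ and $C_1,C_2$ depending only on $\|u_0\|_{L^2}$, $N$, $\sigma$, $b$. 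The exterior potential term is the second INLS difficulty, and it is controlled by the \emph{scale invariant} radial Sobolev inequality $\sup_{x\ne 0}|x|^{\frac{N-1}{2}}|u(x)|\lesssim\|u\|_{L^2}^{1/2}\|\nabla u\|_{L^2}^{1/2}$, which yields $\int_{|x|>R}|x|^{-b}|u|^{2\sigma+2}\lesssim\|u_0\|_{L^2}^{\sigma+2}R^{-(\sigma(N-1)+b)}\|\nabla u\|_{L^2}^{\sigma}$ \emph{for every} $R>0$. This is where $a:=\sigma(N-1)+b$, hence $\beta=\tfrac{2-\sigma}{a}$, enters, and where it is used that $0<\sigma<2$ in the intercritical regime so that $\|\nabla u\|_{L^2}^{\sigma}$ is subquadratic; one also records $0\le z_R(t)\le CR^2$.

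The next step is a double integration of this inequality arranged so as to avoid the boundary term. For $t<s<T^{\ast}$, multiplying by $(s-\tau)$ and integrating on $[t,s]$ gives (after two integrations by parts and discarding $-z_R(s)\le 0$)
\begin{align*}
8\sigma s_c\int_t^{s}(s-\tau)\|\nabla u(\tau)\|_{L^2}^2\,d\tau\ \le\ & z_R(t)+(s-t)z_R'(t)\\
&+\int_t^{s}(s-\tau)\Big(C_E+C_1R^{-a}\|\nabla u(\tau)\|_{L^2}^{\sigma}+C_2R^{-2}\Big)d\tau.
\end{align*}
The term $(s-t)z_R'(t)$ cannot be controlled at a single time, since no a priori \emph{upper} bound on $\|\nabla u(t)\|_{L^2}$ is available (only the lower bound \eqref{bounded}). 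I would therefore average once more in the base point: integrating the last display over $t\in[\sigma_1,s]$, using Fubini, and using the exact identity $\int_{\sigma_1}^{s}(s-t)z_R'(t)\,dt=\int_{\sigma_1}^{s}z_R(t)\,dt-(s-\sigma_1)z_R(\sigma_1)\le\int_{\sigma_1}^{s}z_R(t)\,dt\lesssim(s-\sigma_1)R^2$, which uses only $0\le z_R\le CR^2$. Handling the $\|\nabla u\|_{L^2}^{\sigma}$ contribution by Hölder in time and Young's inequality (again $\sigma<2$) to absorb it in the left-hand side, and restricting to the middle half $I$ of $[\sigma_1,s]$, I expect to obtain, with $L:=s-\sigma_1$,
\begin{align*}
\int_{I}\|\nabla u(\tau)\|_{L^2}^2\,d\tau\ \lesssim\ \frac{R^{2}}{L}+R^{-\frac{2a}{2-\sigma}}L,\qquad 0<R\le 1,
\end{align*}
valid for all admissible $\sigma_1>0$, $s<T^{\ast}$ with $L$ small, and with an implied constant depending only on $u_0$, $N$, $\sigma$, $b$.

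Then I would optimize the free parameter by $R=L^{\frac{2-\sigma}{2-\sigma+a}}$, which balances the two terms and gives $\int_I\|\nabla u(\tau)\|_{L^2}^2\,d\tau\lesssim L^{-\frac{q-2}{q+2}}$ with $q:=\frac{2a}{2-\sigma}$, and finally decompose $[t,T^{\ast})=\bigcup_{k\ge 0}J_k$ into intervals with $|J_k|\sim\operatorname{dist}(J_k,T^{\ast})\sim 2^{-k}(T^{\ast}-t)$, realize each $J_k$ as the middle half of an auxiliary interval sitting strictly inside $[0,T^{\ast})$, apply the previous bound on each $J_k$, and sum:
\begin{align*}
\int_t^{T^{\ast}}(T^{\ast}-\tau)\|\nabla u(\tau)\|_{L^2}^2\,d\tau\ &\lesssim\ \sum_{k\ge 0}2^{-k}(T^{\ast}-t)\cdot\big(2^{-k}(T^{\ast}-t)\big)^{-\frac{q-2}{q+2}}\\
&=(T^{\ast}-t)^{\frac{4}{q+2}}\sum_{k\ge 0}2^{-\frac{4k}{q+2}}.
\end{align*}
The geometric series converges, and a direct computation gives $\frac{4}{q+2}=\frac{2\beta}{1+\beta}$, which is exactly \eqref{blowupestdisp}; the mass-supercriticality $\sigma N+b>2$ (equivalently $q>2$) is precisely what makes the scheme self-consistent.

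The main obstacle is the one flagged above: near blow-up there is no pointwise upper bound for $\|\nabla u(t)\|_{L^2}$, so the boundary term generated by the virial identity has to be removed by the extra averaging in the base point, and one must check that this averaging does not degrade the final power. The secondary, genuinely inhomogeneous points are the extra $\nabla(|x|^{-b})$ term in the localized virial identity and, more importantly, the need to use the radial Sobolev inequality in its scale-invariant form so that the cut-off radius $R$ can be sent to $0$ jointly with $T^{\ast}-t$. Throughout, the only information about the solution that is used is the conserved mass $\|u(t)\|_{L^2}=\|u_0\|_{L^2}$, the conserved energy $E[u_0]$, and the radial symmetry, so the constant $C_{u_0}$ in \eqref{blowupestdisp} depends only on $\|u_0\|_{L^2}$, $E[u_0]$, $N$, $\sigma$ and $b$.
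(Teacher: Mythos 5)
Your proposal is correct and shares the paper's main ingredients --- a localized virial identity at scale $R$ (including the extra $\nabla(|x|^{-b})\cdot\nabla\psi_R$ term), the scale-invariant radial Strauss inequality to bound $\int_{|x|\ge R}|x|^{-b}|u|^{2\sigma+2}\lesssim R^{-a}\|\nabla u\|_{L^2}^{\sigma}$ with $a=\sigma(N-1)+b$, conservation of mass and energy, and the same balancing $R\sim(\text{time scale})^{\frac{\beta}{1+\beta}}$ --- but it diverges from the paper exactly where you flag the main obstacle, the uncontrollable boundary term $z_R'(t)$. The paper keeps that term: after two integrations and letting $t_2\to T^{\ast}$ it bounds it by $R(T^{\ast}-t)\|\nabla u(t)\|_{L^2}$, and then absorbs the resulting $(T^{\ast}-t)^2\|\nabla u(t)\|_{L^2}^2$ by observing that it equals $-(T^{\ast}-t)h'(t)$ for $h(t)=\int_t^{T^{\ast}}(T^{\ast}-\tau)\|\nabla u(\tau)\|_{L^2}^2\,d\tau$, which turns the estimate into the differential inequality $\bigl(h/(T^{\ast}-t)\bigr)'\le C_{u_0}(T^{\ast}-t)^{-\frac{2}{1+\beta}}$ and finishes in two lines with a single choice $R(t)=(T^{\ast}-t)^{\frac{\beta}{1+\beta}}$. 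You instead eliminate the boundary term by a third averaging in the base point (using only $0\le z_R\lesssim R^2$ from mass conservation), obtain a local-in-time bound $\int_I\|\nabla u\|_{L^2}^2\lesssim L^{-\frac{q-2}{q+2}}$ on middle halves, and recover the weighted integral up to $T^{\ast}$ by a dyadic covering and a convergent geometric series; the exponent $\frac{4}{q+2}=\frac{2\beta}{1+\beta}$ indeed comes out the same. Your route is a bit longer but entirely self-contained at the level of the virial quantity (no ODE/Gronwall step), while the paper's trick buys brevity and avoids the covering argument. One small point to make rigorous in your write-up: since the averaged left-hand side only controls $\int_{\sigma_1}^{s}(s-\tau)(\tau-\sigma_1)\|\nabla u\|_{L^2}^2\,d\tau$, the Hölder--Young absorption of the $R^{-a}\|\nabla u\|_{L^2}^{\sigma}$ contribution should be performed against this weighted integral (or before restricting to the middle half), which is exactly the order you indicate and works because $\sigma<2$; also check $\sigma_1\ge0$ and $s<T^{\ast}$ in the dyadic construction, which holds for $t$ close enough to $T^{\ast}$ with appropriate constants.
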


This is an application of a localized virial type inequality satisfied by the solutions of the INLS equation (see Lemma \ref{lemintradial}) and its proof is based on the ideas introduced by Merle, Rapha\"{e}l, and Szeftel \cite{MRS2014} for the classical NLS equation.

As a consequence of \eqref{blowupestdisp} we have the following upper bound
	\begin{align}
	\liminf_{t\uparrow T^{\ast}}(T^{\ast}-t)^{\frac{1}{1+\beta}}\|\nabla u(t)\|_{L^2}<+\infty.
	\end{align}
	In particular, there exists a sequence $\{t_n\}_{n=1}^{+\infty}\subset [0,T^{\ast})$ with $t_n\to T^{\ast}$ such that
	\begin{align}
	\|\nabla u(t_n)\|_{L^2}\leq \frac{C}{(T^{\ast}-t_n)^{\frac{1}{1+\beta}}},\,\,\,\, \mbox{ as }\,\,\,\, n\to+\infty.
	\end{align}
	It should be emphasized that it is an open problem to show the previous upper bound for every sequence $\{t_n\}_{n=1}^{+\infty}$ with $t_n\to T^{\ast}$.

This paper is organized as follows. In Section \ref{sec3}, we established a radial interpolation estimate. A virial type estimate and the proof of Theorem \ref{thmblowdisper} is discussed in Section \ref{sec31}. In Section \ref{sec4}, we obtain the key propositions needed to prove our main results. The last two sections are devoted to the proofs of Theorems \ref{scteo1}-\ref{scteo2} and Corollary  \ref{cor13}.

\section{A radial Gagliardo-Nirenberg inequality}\label{sec3}

We first recall a Gagliardo-Nirenberg type inequality proved in  \cite{CFG20}
\begin{align}\label{GNine}
\int |x|^{-b} |u(x)|^{2\sigma+2}\,dx\leq \frac{\sigma+1}{\|V\|_{L^{\sigma_c}}^{2\sigma}}\|\nabla u\|_{L^2}^2\|u\|_{L^{\sigma_c}}^{2\sigma},
\end{align}
where $V$ is a solution to elliptic equation 
\begin{align}\label{elptcpc1}
\Delta V+|x|^{-b}|V|^{2\sigma}V-|V|^{\sigma_c-2}V=0
\end{align}
 with minimal $L^{\sigma_c}$-norm.
 
The main goal of this section is to improve this last inequality in the radial setting and away from the origin. For this purpose, we use the following scaling invariant Morrey-Campanato type semi-norm
\begin{equation}\label{defrho}
\rho(u,R)=\sup_{R'\geq R}\frac{1}{(R')^{2s_c}}\int_{R'\leq |x|\leq 2R'}|u|^2\,dx.
\end{equation}

In the next result, we present the radial interpolation estimate adapted to the INLS setting.

\begin{lemma}\label{lemaradialGN}
	Let $N\geq 3$, $0<b<\min\left\{\frac{N}{2},2\right\}$, $\frac{2-b}{N}<\sigma<\frac{2-b}{N-2}$, $s_c=\frac{N}{2}-\frac{2-b}{2\sigma}$ and $\sigma_c=\frac{2N\sigma}{2-b}$. The following statements hold.
	\begin{itemize}
		\item[(i)]
		There exists a universal constant $c>0$ such that for all $u\in L^{\sigma_c}$ and $R>0$
		\begin{equation}\label{radial1}
		\frac{1}{R^{2s_c}}\int_{|x|\leq R}|u|^2\,dx\leq c\|u\|_{L^{\sigma_c}}^2.
		\end{equation}
Moreover
		\begin{equation}\label{radial2}
		\lim_{R\to+\infty}\frac{1}{R^{2s_c}}\int_{|x|\leq R}|u|^2\,dx=0.
		\end{equation}
		\item[(ii)] For all $\eta>0$, there exists a constant $C_\eta>0$ such that for all $R>0$ and $u\in \dot{H}^{s_c}\cap\dot{H}^{1} $ with radial symmetry the following inequality holds
		\begin{equation}\label{GNradial}
		\int_{|x|\geq R}|x|^{-b}|u|^{2\sigma+2}\,dx\leq \eta\|\nabla u\|_{L^{2}(|x|\geq R)}^{2}+\frac{C_{\eta}}{R^{2(1-s_ c)}}\left\{[\rho(u,R)]^{\frac{2+\sigma}{2-\sigma}}+[\rho(u,R)]^{\sigma+1}\right\}.
		\end{equation}
	\end{itemize}
\end{lemma}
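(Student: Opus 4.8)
For part (i), the plan is to exploit Hölder's inequality on the ball $\{|x| \le R\}$ together with the scaling exponents. Since $\sigma_c = \frac{2N\sigma}{2-b}$ and $s_c = \frac{N}{2} - \frac{2-b}{2\sigma}$, one checks algebraically that $\frac{2}{\sigma_c} + \frac{2s_c}{N} = 1$, i.e.\ the pair of exponents $(\sigma_c/2, \text{conjugate})$ is adapted so that applying Hölder with exponents $\frac{\sigma_c}{2}$ and $\left(\frac{\sigma_c}{2}\right)' = \frac{\sigma_c}{\sigma_c - 2}$ gives
\begin{equation}
\int_{|x|\le R} |u|^2\,dx \le \|u\|_{L^{\sigma_c}(|x|\le R)}^2 \, |\{|x|\le R\}|^{1 - \frac{2}{\sigma_c}} = c\, R^{N(1 - \frac{2}{\sigma_c})} \|u\|_{L^{\sigma_c}(|x|\le R)}^2 = c\, R^{2s_c}\|u\|_{L^{\sigma_c}(|x|\le R)}^2,
\end{equation}
since $N(1 - \frac{2}{\sigma_c}) = 2s_c$. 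Dividing by $R^{2s_c}$ gives \eqref{radial1}. For the limit \eqref{radial2}, I would run the same Hölder estimate but keep the localized $L^{\sigma_c}$ norm: $\frac{1}{R^{2s_c}}\int_{|x|\le R}|u|^2 \le c\,\|u\|_{L^{\sigma_c}(|x|\le R)}^2$, and this tends to $0$ as $R\to\infty$ since $\|u\|_{L^{\sigma_c}(|x|\le R)} \to \|u\|_{L^{\sigma_c}(\mathbb{R}^N)}$... wait, that does not go to zero. Instead one should split: fix $\varepsilon$, pick $R_0$ so that $\|u\|_{L^{\sigma_c}(|x|\le R_0)} < \infty$ and estimate the contribution of $\{|x|\le R_0\}$ by $c R_0^{2s_c}\|u\|_{L^{\sigma_c}}^2 / R^{2s_c} \to 0$, while the annular part $\{R_0 \le |x| \le R\}$ is bounded using Hölder by $c\|u\|_{L^{\sigma_c}(|x|\ge R_0)}^2$, which is small by dominated convergence since $u\in L^{\sigma_c}$. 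Letting $R\to\infty$ then $R_0\to\infty$ gives \eqref{radial2}.

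For part (ii), the key is the radial Strauss-type decay estimate: for radial $u$ and $|x| = r \ge R$,
\begin{equation}
|u(x)| \le C\, r^{-\frac{N-1}{2}} \|u\|_{L^2(r/2 \le |y| \le r)}^{1/2}\,\|\nabla u\|_{L^2(r/2\le|y|\le r)}^{1/2} \lesssim r^{-\frac{N-1}{2}} \cdot r^{s_c/2}[\rho(u,R)]^{1/4}\,\|\nabla u\|_{L^2(|y|\ge R)}^{1/2},
\end{equation}
where the second inequality uses the definition \eqref{defrho} of $\rho$. I would then write $\int_{|x|\ge R}|x|^{-b}|u|^{2\sigma+2} = \int_{|x|\ge R}|x|^{-b}|u|^{2\sigma}|u|^2$ and pull out $\|u\|_{L^\infty(|x|\ge R)}^{2\sigma}$ bounded via the Strauss estimate, leaving $\int_{|x|\ge R}|x|^{-b}|u|^2$. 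Here $|x|^{-b} \le R^{-b}$ on $\{|x|\ge R\}$ is too lossy; instead I would further bound $\int_{|x|\ge R}|x|^{-b}|u|^2$ by decomposing into dyadic annuli $A_j = \{2^j R \le |x| \le 2^{j+1}R\}$, on each of which $|x|^{-b} \sim (2^j R)^{-b}$ and $\int_{A_j}|u|^2 \le (2^{j+1}R)^{2s_c}\rho(u,R)$, then summing the geometric series $\sum_j (2^jR)^{2s_c - b}$ — this converges at the upper end iff $2s_c < b$, which need not hold, so I would more carefully combine the $|x|^{-b}$ factor with part of the $|u|^{2\sigma}$ decay from Strauss so that the net power of $2^j$ is negative. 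Collecting exponents and tracking the powers of $R$ and $\rho(u,R)$ should produce $R^{-2(1-s_c)}$ in front and the two powers $\frac{2+\sigma}{2-\sigma}$ and $\sigma+1$ of $\rho(u,R)$; finally one absorbs the $\|\nabla u\|_{L^2(|x|\ge R)}^2$ factor that appears with a small coefficient by Young's inequality, splitting it as $\eta$ times the gradient squared plus a constant $C_\eta$ times the $\rho$ terms, which is where the two distinct exponents of $\rho$ arise from the two branches of Young's inequality.

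The main obstacle I anticipate is the bookkeeping in part (ii): one must choose the split of the $|u|^{2\sigma}$ factor (how much of the Strauss $L^\infty$ bound is used to beat the $|x|^{-b}$ singularity at infinity versus how much is converted into $\|\nabla u\|_{L^2}$ and $\rho$) so that all geometric sums over dyadic annuli converge and the final powers of $R$ and $\rho$ match \eqref{GNradial} exactly. The constraints $\sigma < 2$ (needed for $\frac{2+\sigma}{2-\sigma}$ to make sense and be positive) and $0 < b < \min\{N/2, 2\}$, together with $0 < s_c < 1$, are exactly what make this balancing possible, so I would keep careful track of where each hypothesis is used. The appearance of $\dot H^{s_c}\cap \dot H^1$ rather than $H^1$ is harmless here since only the homogeneous norms (through $\rho$ and $\|\nabla u\|_{L^2}$) enter the estimate.
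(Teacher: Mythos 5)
Part (i) of your proposal is fine and matches the paper's argument (H\"older with exponents $\frac{\sigma_c}{2}$, $\frac{\sigma_c}{\sigma_c-2}$, then a splitting of the ball into an inner region killed by the factor $R^{-2s_c}$ and an outer region controlled by the $L^{\sigma_c}$ tail); your way of treating the annular part via the $L^{\sigma_c}$ tail is if anything cleaner, since $u\in L^{\sigma_c}$ alone does not give finite $L^2$ tails.

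Part (ii), however, has a genuine gap. The annulus-localized Strauss estimate you start from,
\begin{equation}
|u(x)|\leq C\,r^{-\frac{N-1}{2}}\|u\|_{L^{2}(r/2\leq |y|\leq r)}^{1/2}\,\|\nabla u\|_{L^{2}(r/2\leq |y|\leq r)}^{1/2},
\end{equation}
is false as stated: a radial function that is constant on the annulus has vanishing gradient there but nonzero sup, so the right-hand side cannot control the left without an additional lower-order term. You cannot repair this by passing to the usual exterior Strauss bound with norms over $\{|y|\geq R\}$ either, because $u\in\dot H^{s_c}\cap\dot H^{1}$ with $s_c>0$ need not belong to $L^{2}$ at infinity --- this is precisely why the lemma is phrased in terms of the Morrey--Campanato quantity $\rho(u,R)$ rather than $\|u\|_{L^2}$. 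The paper resolves this by working annulus by annulus with a dichotomy: either $|u|$ drops by a factor $2$ somewhere on the annulus $\{D\leq|x|\leq 2D\}$, in which case a fundamental-theorem-of-calculus argument in the radial variable gives $\|u\|_{L^{\infty}(\mathcal C)}^{2}\leq \frac{c}{D^{N-1}}\|\nabla u\|_{L^{2}(\mathcal C)}\|u\|_{L^{2}(\mathcal C)}$ and Young's inequality produces $\eta\|\nabla u\|_{L^2(\mathcal C)}^{2}+C_\eta D^{-2(1-s_c)}[\rho]^{\frac{2+\sigma}{2-\sigma}}$; or $|u|$ is comparable to its sup on the whole annulus, in which case $\|u\|_{L^{\infty}(\mathcal C)}\lesssim D^{-N/2}\|u\|_{L^{2}(\mathcal C)}$ and one gets $cD^{-2(1-s_c)}[\rho]^{\sigma+1}$ with no gradient at all. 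This also shows that your closing claim --- that the two exponents $\frac{2+\sigma}{2-\sigma}$ and $\sigma+1$ come from ``two branches of Young's inequality'' --- is not correct: Young yields only the first exponent, and the second comes from the degenerate (near-constant) case that your Strauss-based scheme does not see. With the dichotomy in hand, the dyadic summation you envisage does close, since $\rho(u,2^{j}R)\leq\rho(u,R)$ and $\sum_{j}(2^{j}R)^{-2(1-s_c)}\lesssim R^{-2(1-s_c)}$ using $s_c<1$; a final density argument (smooth radial $u_n\to u$ in $\dot H^{s_c}\cap\dot H^{1}$, with the potential term controlled by the Gagliardo--Nirenberg inequality \eqref{GNine}) is needed to pass from smooth functions to general $u$, a step your proposal omits.
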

\begin{proof}
	(i) This item was proved by Merle and Raph\"ael in \citep[Lemma 1]{MR_Bsc}. For the convenience of the reader, we also present the proof here. Since $0<s_c<1$, from H\"older's inequality, we have
	\begin{equation}
	\int_{|x|\leq R}|u|^2\,dx\leq \left(\int_{|x|\leq R} |u|^{\sigma_c}\,dx\right)^{\frac{2}{\sigma_c}}\left(\int_{|x|\leq R}1\,dx\right)^{\frac{2s_c}{N}}.
	\end{equation}
	then \eqref{radial1} follows. 
	
	Let $R>A>1$ and divide the integral in the left hand side of \eqref{radial1} in two parts to get
	\begin{align}
	\frac{1}{R^{2s_c}}\int_{|x|\leq R}|u|^2\,dx&\leq\frac{1}{R^{2s_c}}\int_{|y|\leq A}|u|^2\,dx+\frac{1}{R^{2s_c}}\int_{A\leq |y|\leq R}|u|^2\,dx\\ &\leq\frac{1}{R^{2s_c}}\int_{|y|\leq A}|u|^2\,dx+\int_{A\leq |y|\leq R}|u|^2\,dx.
	\end{align}
	Now, given $\varepsilon>0$, choose $R>A>1$ large enough such that
	\begin{align}
	\displaystyle\int_{|y|\geq A}|u|^2\,dx <\frac{\varepsilon}{2}\,\,\,\mbox{and} \,\,\, \displaystyle\left(\frac{A}{R}\right)^{2s_c}<\frac{\varepsilon}{2c}.
	\end{align}
	Thus, using \eqref{radial1} we deduce
	\begin{equation}
	\frac{1}{R^{2s_c}}\int_{|x|\leq R}|u|^2\,dx\leq c\left(\frac{A}{R}\right)^{2s_c}\|u\|_{L^{\sigma_c}}^2+\int_{|y|\geq A}|u|^2\,dx<\varepsilon,
	\end{equation}
which completes the proof of item (i).\\
	(ii) We first assume that $u$ is a smooth radially symmetric function. For $D>0$, define the following set
	$$\mathcal{C}=\{x\in \Real^N;\,D\leq |x|\leq 2D\}.$$
	Let $x_0\in \mathcal{C}$ such that $|u(x_0)|=\|u\|_{L^{\infty}(\mathcal{C})}.$ If there exists $y_0\in \mathcal{C}$ such that
	\begin{equation}\label{condrad}
	|u(y_0)|\leq\frac{1}{2}|u(x_0)|,
	\end{equation}
then
	\begin{align}\label{lemrad1}
	\|u\|_{L^{\infty}}^2&=\frac{4}{3}\left||u(x_0)|^2-\frac{1}{4}|u(x_0)|^2\right|\leq \frac{4}{3}\left||u(x_0)|^2-|u(y_0)|^2\right|.
	\end{align}
	Since $u$ is a radial function we set $u(x)=v(|x|)=v(r)$, then
	\begin{align}\label{lemrad2}
	\int_{\mathcal{C}}|u(x)|^2\,dx=\int_{D}^{2D}\left(\int_{S^{N-1}}|v(r)|^2d\omega\right)r^{N-1}dr \geq cD^{N-1}\int_{D}^{2D}|v(r)|^2\,dr,
	\end{align}
where $S^{N-1}$ denotes the surface of the unit ball in $\mathbb{R}^N$.

Moreover, since $\partial_{x_i}u=\frac{x_i}{r}\,\partial_r v$, using the same argument we also deduce
	\begin{align}\label{lemrad3}
	\int_{\mathcal{C}}|\nabla u(x)|^2
	\,dx\geq cD^{N-1}\int_{D}^{2D}|v'(r)|^2\,dr.
	\end{align}
	Hence, by \eqref{lemrad1}, \eqref{lemrad2},  \eqref{lemrad3} and Cauchy-Schwarz inequality, we get
	\begin{align}
	\|u\|_{L^{\infty}}^{2}&\leq c\left||v(|x_0|)|^2-|v(|y_0|)|^2\right|=c \left|\int_{|y_0|}^{|x_0|} \partial_r |v(r)|^2\,dr\right|\\
	&
\leq c\int_{D}^{2D}|v(r)||v'(r)|\,dr\leq \frac{c}{D^{N-1}} \|\nabla u\|_{L^2(\mathcal C)}\|u\|_{L^2(\mathcal C)}\nonumber.
	\end{align}
	Thus,
	\begin{align}
	\int_{\mathcal C}|x|^{-b}|u|^{2\sigma+2}\,dx &\leq D^{-b}\|u\|_{L^{\infty}(\mathcal C)}^{2\sigma} \|u\|^2_{L^2(\mathcal C)} \leq \frac{c}{D^{\sigma(N-1)+b}}\|\nabla u\|^\sigma_{L^2(\mathcal C)}\|u\|_{L^2(\mathcal C)}^{2+\sigma}\\
	&\leq \frac{c}{D^{(2-\sigma)(1-s_c)}}\|\nabla u\|^{\sigma}_{L^2(\mathcal C)}\left(\frac{1}{D^{2s_c}}\int_{D\leq |x|\leq 2D} |u|^2\,dx\right)^{\frac{2+\sigma}{2}}\nonumber\\
	&\leq \frac{c}{D^{(2-\sigma)(1-s_c)}}\|\nabla u\|_{L^2(\mathcal C)}^{\sigma}[\rho(u,D)]^{\frac{2+\sigma}{2}}.\label{Young}
	\end{align}
	For all $\eta>0$, from the Young inequality, there exists a constant $C_\eta>0$ such that
	\begin{align}
	\int_{\mathcal C}|x|^{-b}|u|^{2\sigma+2}\,dx\leq\eta\|\nabla u\|_{L^2(\mathcal C)}^{2}+C_\eta\frac{1}{D^{2(1-s_c)}}[\rho(u,D)]^{\frac{2+\sigma}{2-\sigma}}.
	\end{align}
	On the other hand, if \eqref{condrad} does not hold for any $y\in \mathcal C$, then
	\begin{align}
	\frac{1}{2}|u(x_0)|\leq |u(y)|
	\end{align}
	then,
	\begin{align}
	\|u\|_{L^{2}(\mathcal{C})}\geq\|u\|_{L^{\infty}(\mathcal C)}\left(\int_{D}^{2D}\left(\int_{S^{N-1}}d\omega\right)r^{N-1}dr\right)^{\frac{1}{2}}\geq  cD^{\frac{N}{2}}\|u\|_{L^{\infty}(\mathcal C)}.
	\end{align}
	Hence, we deduce the following estimate for the potential energy over $\mathcal C$
	\begin{align}
	\int_{\mathcal C}|x|^{-b}|u|^{2\sigma+2}\,dx\leq c\|u\|_{L^\infty(\mathcal C)}^{2\sigma+2}D^{N-b}\leq \frac{c}{D^{N\sigma+b}}\|u\|_{L^2(\mathcal C)}^{2\sigma+2}\leq\frac{c}{D^{2(1-s_c)}} [\rho(u,D)]^{\sigma+1}.
	\end{align}
	Therefore, in both cases, given $\eta>0$ there exists a constant $C_\eta>0$ such that
	\begin{align}\label{GNann}
	\int_{\mathcal C}|x|^{-b}|u|^{2\sigma+2}\,dx\leq \eta \|\nabla u\|_{L^2(\mathcal C)}^{2}+\frac{C_\eta}{D^{2(1-s_c)}}\left\{[\rho(u,D)]^{\frac{2+\sigma}{2-\sigma}}+[\rho(u,D)]^{\sigma+1}\right\}.
	\end{align}
	
	Now, we extend the above estimate for the set $\{x\in \Real^N; |x|\geq R\}$. Indeed, given $R>0$ and $j\in \mathbb{N}$, we first note that
	\begin{align}\label{monrho}
	\rho(u,2^jR)=\sup _{R'>2^jR}\frac{1}{(R')^{2s_c}}\int_{R'\leq |x|\leq2R'}|u|^2\,dx\leq \sup_{R'>R}\frac{1}{(R')^{2s_c}}\int_{R'\leq |x|\leq 2R'}|u|^2\,dx=\rho(u,R).\nonumber
	\end{align}
	Therefore, since
	\begin{align}
	\int_{|x|\geq R}|x|^{-b}|u|^{2\sigma+2}\,dx=\sum_{j=0}^{\infty}\int_{2^jR\leq |x|\leq 2^{j+1}R}|x|^{-b}|u|^{2\sigma+2}\,dx
	\end{align} 
	we obtain, using \eqref{GNann} with $D=2^jR$, that
	\begin{align}
	\int_{|x|\geq R}&|x|^{-b}|u(x)|^{2\sigma+2}\,dx\\&\leq \eta\sum_{j=0}^{\infty}\|\nabla u\|_{L^2(2^jR\leq |x|\leq 2^{j+1}R)}^{2}+C_{\eta}\left\{[\rho(u,R)]^{\frac{2+\sigma}{2-\sigma}}+[\rho(u,R)]^{\sigma+1}\right\}\sum_{j=0}^{\infty}\frac{1}{(2^jR)^{2(1-s_c)}}\nonumber\\
	&\leq \eta\|\nabla u\|_{L^2(|x|\geq R)}^{2}+C_\eta \frac{1}{R^{2(1-s_c)}}\left\{[\rho(u,R)]^{\frac{2-\sigma}{2+\sigma}}+[\rho(u,R)]^{\sigma+1}\right\}\label{GNsmooth}.
	\end{align}
	
Note that in the inequalities \eqref{Young} and \eqref{GNsmooth} we have used
\begin{equation}\label{rema}
\sigma<2\,\,\, \mbox{and}\,\,\,s_c>0, \,\,\, \mbox{since}\,\,\,\frac{2-b}{N}<\sigma<\frac{2-b}{N-2}.
\end{equation}
	
	In the general case, given $u\in \dot{H}^{s_c}\cap\dot{H}^1$ radially symmetric, consider a sequence of smooth radially symmetric functions $u_n$ such that $u_n\to u$ in $\dot{H}^{s_c}\cap \dot{H}^1 $ as $n\to +\infty$. From the Sobolev embedding $\dot{H}^{s_c}\subset L^{\sigma_c}$ it follows that $u_n\to u$ in $L^{\sigma_c}$. Thus, given $\varepsilon>0$ choose $n_{0}\in \mathbb N$ large enough such that for all $n\geq n_0$ we deduce
	\begin{align}\label{conveps}
\|u_n-u\|_{\dot{H}^1}<\varepsilon,\,\,\,\,	\|u_n-u\|_{L^{\sigma_c}}<\varepsilon\,\,\,\,\mbox{ and }\,\,\,\, \int |x|^{-b} |(u_n-u)(x)|^{2\sigma+2}\,dx<\varepsilon,
	\end{align}
where in the last inequality we have used the Gagliardo-Nirenberg inequality \eqref{GNine}. For every $R>0$, since $\sigma_c>2$, by Holder's inequality and \eqref{conveps}, we have
	\begin{align}
	\frac{1}{R^{s_c}}\left|\|u_{n}\|_{L^2(R\leq |x|\leq 2R)}-\|u\|_{L^2(R\leq |x|\leq 2R)}\right|&\leq\frac{1}{R^{s_c}}\|u_{n}-u\|_{L^2(R\leq |x|\leq 2R)}\\
	&\leq \frac{c}{R^{s_c}}\|u_n-u\|_{L^{\sigma_c}}R^{s_c}\\
	& = c\|u_n-u\|_{L^{\sigma_c}}<c\varepsilon,
	\end{align}
where $c>0$ is independent of $R>0$. Therefore
	\begin{align}\label{congrho}
	\rho^{\frac{1}{2}}(u_n,R)\leq c\varepsilon+\rho^{\frac{1}{2}}(u,R).
	\end{align}
Finally, using  \eqref{GNsmooth}, \eqref{conveps} and \eqref{congrho}, we obtain
	\begin{align}
	\int_{|x|\geq R}|x|^{-b}|u|^{2\sigma+2}\,dx&\leq 2^{2\sigma+2}\left(\int_{|x|\geq R}|x|^{-b}|u-u_n|^{2\sigma+2}\,dx+\int_{|x|\geq R}|x|^{-b}|u_n|^{2\sigma+2}\,dx\right)\nonumber\\ &< c\varepsilon+c\eta\|\nabla u_n\|_{L^2(|x|\geq R)}^{2}\\&\quad\quad\quad+\frac{cC_{\eta}}{R^{2(1-s_c)}}\left\{[\rho(u_n,R)]^{\frac{2-\sigma}{2+\sigma}}+[\rho(u_n,R)]^{\sigma+1}\right\}\nonumber\\
	&< c\varepsilon+c\eta\left(\varepsilon+\|\nabla u\|_{L^{2}(|x|\geq R)}\right)^2\\&\quad\quad\quad+\frac{2^{2\sigma+2}C_\eta}{R^{2(1-s_c)}}\left\{[c\varepsilon+\rho^{\frac{1}{2}}(u,R)]^{\frac{4-2\sigma}{2+\sigma}}+[c\varepsilon+\rho^{\frac{1}{2}}(u,R)]^{2\sigma+2}\right\}.\nonumber
	\end{align}
Since $\varepsilon>0$ is arbitrary we concludes the proof of item (ii) and finishes the proof of Lemma \ref{lemaradialGN}.
\end{proof} 

\section{A virial type estimate and the proof of Theorem \ref{thmblowdisper}}\label{sec31}
In this section, we recall a localized version of the viral identity \eqref{VI} and use it to deduce an estimate that plays an important role in the analysis. Indeed, consider a non-negative radial function $\phi\in C^{\infty}_0(\Real^N)$, such that
\begin{align}\label{phi}
\phi(x)=
\left\{
\begin{array}{ll}
\frac{|x|^2}{2},&\mbox{ se }|x|\leq 2\\
0,&\mbox{ se }|x|\geq 4
\end{array}
\right.
\end{align}
satisfying
\begin{align}\label{nablaphi}
\phi(x)\leq c|x|^2, \quad |\nabla \phi (x)|^2\leq c\phi(x) \quad \mbox{and} \quad \partial_r^2\phi(x)\leq 1, \quad \mbox{for all } x\in \Real^N,
\end{align}
with $r=|x|$. Then, define $\phi_R(x)=R^2\phi\left(\frac{x}{R}\right)$. 

Let $v\in C([0,\tau_*],\dot H^{s_c}\cap \dot{H}^1)$ be a solution to \eqref{PVI}. For any $R>0$ and $\tau\in [0,\tau_*]$ define 
\begin{align}
z_R(\tau)=\int \phi_R\left(x\right)|v(x,\tau)|^2\,dx.
\end{align}
Note that, in view of Lemma \ref{lemaradialGN} and Sobolev embedding, $z_R(\tau)$ is well-defined since
\begin{align}
z_R(\tau)=R^2\int_{|x|\leq 4R}\phi\left(\frac{x}{R}\right)|v(\tau)|^2\,dx\leq cR^{2s_c}\|v(\tau)\|_{\dot H^{s_c}}.
\end{align}
Moreover, since $v(\tau)$ is a solution of the INLS equation, we deduce the following virial identities,
\begin{equation}\label{phi'}
z'_R(\tau)=2R \textit{ Im}\int_{\Real^N} \nabla \phi\left(\frac{x}{R}\right)\cdot \nabla v \overline{v}\,dx=2\textit{Im}\int \nabla \phi_R\cdot \nabla v \overline{v}\,dx.
\end{equation}
and
\begin{align}\label{virialseg}
z''_R(\tau)&=4 \sum_{j,k=1}^{N}\int \partial_{x_k} v\,\partial_{x_{j}} \overline{v}\,\partial^2_{x_{j}x_k} \phi\left(\frac{x}{R}\right)\,dx-\frac{1}{R^2}\int |v|^2\Delta^2\phi\left(\frac{x}{R}\right)\,dx\nonumber\\ 
&-\frac{2\sigma}{\sigma+1}\int |x|^{-b}|v|^{2\sigma +2}\Delta\phi\left(\frac{x}{R}\right)\,dx+\frac{2R}{\sigma+1}\int \nabla\left(|x|^{-b}\right)\nabla \phi\left(\frac{x}{R}\right)|v|^{2\sigma+2}\,dx.
\end{align}
(see, for instance, Proposition 7.2 in \cite{FG20}.)

In the next result, we use the identities \eqref{phi'}-\eqref{virialseg} to deduce an important virial type estimate.
\begin{lemma}\label{lemintradial}
	Let $v\in C([0,\tau_*]: \dot H^{s_c}\cap \dot H^1 )$ be a radial solution to \eqref{PVI} with initial data $v_0 \in  \dot H^{s_c}\cap \dot H^1$. Then, there exists $c>0$ depending only on $N, \sigma, b$ such that for all $R>0$ and $\tau\in[0, \tau_\ast]$ we have
	\begin{align}
	2\sigma s_c\int |\nabla v|^2\,dx+\frac{d}{d\tau}\textit{Im}&\int\nabla \phi_R\cdot\nabla v\overline{v}\,dx-4(\sigma s_c+1)E[v_0]\\\leq &\,\,c\left(\frac{1}{R^2}\int_{2R\leq |x|\leq 4R}|v|^2\,dx+\int_{|x|\geq R}|x|^{-b}|v|^{2\sigma+2}\,dx\right).\label{estintradial}
	\end{align}
\end{lemma}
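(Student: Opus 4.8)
The plan is to start from the second virial identity \eqref{virialseg} and combine it with the first-derivative identity \eqref{phi'} to produce a quantity that, up to controlled error terms, reproduces the right-hand side of the virial identity \eqref{VI}, namely $8(\sigma s_c+1)E[v_0]-8\sigma s_c\|\nabla v\|_{L^2}^2$. Concretely, I would write $z_R''(\tau)$ and add and subtract the "exact" virial quantity, so that
\begin{align*}
z_R''(\tau)=8(\sigma s_c+1)E[v_0]-8\sigma s_c\|\nabla v\|_{L^2}^2+\mathcal{E}_R(\tau),
\end{align*}
where $\mathcal{E}_R(\tau)$ collects all the terms coming from the region where $\phi_R$ differs from $|x|^2/2$, i.e. from $\{|x|\ge 2R\}$. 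Recalling from \eqref{phi'} that $z_R'(\tau)=2\,\textit{Im}\int\nabla\phi_R\cdot\nabla v\,\overline v\,dx$, dividing by $4$ and rearranging gives exactly the left-hand side of \eqref{estintradial} equal to $\tfrac14\mathcal{E}_R(\tau)$, and the task reduces to bounding $|\mathcal{E}_R(\tau)|$ by the right-hand side of \eqref{estintradial}.

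To do this I would examine the four terms in \eqref{virialseg} one at a time, each after subtracting its "flat" ($\phi=|x|^2/2$) counterpart. For the Hessian term $4\sum_{j,k}\int\partial_{x_k}v\,\partial_{x_j}\overline v\,\partial^2_{x_jx_k}\phi(x/R)\,dx$, on $|x|\le 2R$ we have $\phi(x/R)=|x|^2/2$ so $\partial^2_{x_jx_k}\phi=\delta_{jk}$ and this piece is exactly $4\|\nabla v\|_{L^2(|x|\le 2R)}^2$; on the annulus $\{2R\le|x|\le 4R\}$, using radiality to decompose the Hessian into radial and angular parts together with $\partial_r^2\phi\le 1$ from \eqref{nablaphi}, one checks that the angular contribution has a favorable sign (it is bounded above by $\tfrac{1}{r}\phi_r|\nabla v|^2$-type terms that can be absorbed) while the remaining positive part is $\le c\|\nabla v\|_{L^2(2R\le|x|\le 4R)}^2$. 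The key point is that $2\sigma s_c\|\nabla v\|_{L^2}^2$ on the left plus $-8\sigma s_c\|\nabla v\|_{L^2}^2$ and $+4\|\nabla v\|_{L^2(|x|\le 2R)}^2$ on the right must combine with a good sign: here I would use that the coefficient works out because $0<s_c<1$ (so $2\sigma s_c<2\sigma s_c+\text{something}$), exactly mirroring the Merle–Raphaël computation, so that the full-space gradient term is controlled. The biharmonic term $-\tfrac{1}{R^2}\int|v|^2\Delta^2\phi(x/R)\,dx$ is supported in $\{2R\le|x|\le 4R\}$ where $|\Delta^2\phi(x/R)|\le c$, giving directly $\le \tfrac{c}{R^2}\int_{2R\le|x|\le4R}|v|^2\,dx$. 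The two potential terms, $-\tfrac{2\sigma}{\sigma+1}\int|x|^{-b}|v|^{2\sigma+2}\Delta\phi(x/R)\,dx$ and $\tfrac{2R}{\sigma+1}\int\nabla(|x|^{-b})\cdot\nabla\phi(x/R)|v|^{2\sigma+2}\,dx$, need to be matched against $-8(\sigma s_c+1)\cdot\big(-\tfrac{1}{2\sigma+2}\big)\int|x|^{-b}|v|^{2\sigma+2}\,dx$ from the energy term; one computes that on $|x|\le 2R$, where $\Delta\phi(x/R)=N$ and $\nabla\phi(x/R)=x$, these combine to give precisely a multiple of $\int_{|x|\le 2R}|x|^{-b}|v|^{2\sigma+2}\,dx$ with the arithmetic identity $\tfrac{2\sigma N}{\sigma+1}-\tfrac{2(N-b)}{\sigma+1}=\tfrac{4\sigma(\sigma s_c+1)}{2\sigma+2}$ (using $s_c=\tfrac N2-\tfrac{2-b}{2\sigma}$), so the interior pieces cancel against the energy term and only the tail $\{|x|\ge R\}$ integral of $|x|^{-b}|v|^{2\sigma+2}$ survives, bounded using $|\nabla\phi|^2\le c\phi\le cR^2$ and $|\nabla(|x|^{-b})|=b|x|^{-b-1}$.

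The main obstacle I anticipate is the careful bookkeeping in the Hessian term: one must verify that the angular part of $\sum_{j,k}\partial_{x_k}v\,\partial_{x_j}\overline v\,\partial^2_{x_jx_k}\phi(x/R)$ — which for a radial $\phi$ equals $\tfrac{\phi_r(x/R)}{R r}|\nabla_\theta v|^2 + \phi_{rr}(x/R)|\partial_r v|^2$ in suitable notation — has the right sign on the support of $\phi_R$ so that it can be dropped (the gradient appears with a nonnegative coefficient on $|x|\le 2R$ and is only an $L^2$-tail on the annulus), and simultaneously that the interior radial part reconstructs $4\|\nabla v\|_{L^2}^2$ with exactly the coefficient needed to absorb $2\sigma s_c\|\nabla v\|_{L^2}^2-8\sigma s_c\|\nabla v\|_{L^2}^2$ with no leftover bad full-space gradient term. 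A secondary delicate point is the exact constant matching in the potential terms, where the specific value $s_c=\tfrac N2-\tfrac{2-b}{2\sigma}$ and the factor $\sigma s_c+1$ must conspire to cancel the interior contributions; this is where the role of the parameter $b$ enters, and one has to track the extra term $\tfrac{2R}{\sigma+1}\int\nabla(|x|^{-b})\cdot\nabla\phi(x/R)|v|^{2\sigma+2}\,dx$ that is absent in the classical NLS case. Once these two cancellations are confirmed, the remaining estimates are routine applications of \eqref{nablaphi} and the support properties of $\phi_R$.
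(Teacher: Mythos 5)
Your proposal follows essentially the same route as the paper's proof: start from the localized identities \eqref{phi'}--\eqref{virialseg}, use that $\phi_R=|x|^2/2$ on $\{|x|\leq 2R\}$ so the interior potential terms recombine with the energy through $N\sigma+b=2\sigma s_c+2$, bound the Hessian term by $\|\nabla v\|_{L^2}^2$ via radiality and $\partial_r^2\phi\leq 1$, and control the biharmonic and exterior tail terms by the two error terms on the right of \eqref{estintradial}. Only constant-level bookkeeping in your outline needs correcting — the flat virial identity gives $z_R''=8(\sigma s_c+1)E[v_0]-4\sigma s_c\|\nabla v\|_{L^2}^2$ (not $-8\sigma s_c$), the interior cancellation reads $\frac{2\sigma N}{\sigma+1}+\frac{2b}{\sigma+1}=\frac{8(\sigma s_c+1)}{2\sigma+2}$ rather than the identity you quoted, and for radial $v$ the angular part of the Hessian term vanishes identically (so no sign discussion is needed) — none of which affects the viability of the argument, since the constant $c$ in \eqref{estintradial} is arbitrary and the gradient coefficients match exactly as in the paper.
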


\begin{proof}
	By the virial identity \eqref{virialseg},
	we deduce
	\begin{align}
	\frac{1}{2}\frac{d}{d\tau}\textit{Im}\int \nabla \phi_R\cdot \nabla v \overline{v}\,dx&=\frac{1}{2}\frac{d}{d\tau}\left(\frac{1}{2}\frac{d}{d\tau}z_{R}(\tau)\right)=\frac{1}{4}z''_R(\tau)\\
	&=\sum_{j,k=1}^{N}\int \partial_{x_k} v\,\partial_{x_{j}} \overline{v}\,\partial^2_{x_{j}x_k} \phi\left(\frac{x}{R}\right)\,dx\\
	&\quad-\frac{1}{4R^2}\int |v|^2\Delta^2\phi\left(\frac{x}{R}\right)\,dx\nonumber\\
	&\quad-\frac{\sigma}{2\sigma+2}\int |x|^{-b}|v|^{2\sigma +2}\Delta\phi\left(\frac{x}{R}\right)\,dx\\
	&\quad+\frac{R}{2\sigma+2}\int \nabla\left(|x|^{-b}\right)\nabla \phi\left(\frac{x}{R}\right)|v|^{2\sigma+2}\,dx.\label{virial}
	\end{align}
Recall that for a radially symmetric function $f$ we have 
	\begin{equation}
	\partial_{x_j} f=\frac{x_j}{r}\partial_r f\,\,\,\,\quad\mbox{ and }\quad\,\,\,\,\,\partial^2_{x_jx_k} f=\left(\frac{\delta_{jk}}{r}-\frac{x_jx_k}{r^3}\right)\partial_r f+\frac{x_jx_k}{r^2}\partial^2_r f.
	\end{equation}
	So, since $v$ and $\phi$ are radially symmetric, from the previous relations we obtain
	\begin{align}
	\sum_{j,k=1}^N\int  \partial_{x_k} v\,\partial_{x_{j}} \overline{v}\,\partial^2_{x_{j}x_k} \phi\left(\frac{x}{R}\right)\,dx
	&=\sum_{j,k=1}^N\int  \left|\partial_r v\right|^2\frac{x_jx_k}{r^2}\left(\frac{\delta_{jk}}{r}-\frac{x_jx_k}{r^3}\right)\partial_r \phi \left(\frac{x}{R}\right)\,dx\\
	&\quad+\sum_{j,k=1}^N\int  \left|\partial_r v\right|^2\frac{x_j^2x_k^2}{r^4}\partial^2_r \phi \left(\frac{x}{R}\right)\,dx\\
	&= \int  \left|\partial_r v\right|^2\frac{1}{r}\partial_r \phi\left(\frac{x}{R}\right)\,dx-\int  \left|\partial_r v\right|^2\frac{1}{r}\partial_r \phi\left(\frac{x}{R}\right)\,dx \nonumber\\
	&\quad+\int\left|\partial_r v\right|^2\partial^2_r \phi\left(\frac{x}{R}\right)\,dx\\
	&=\int \partial^2_r \phi\left(\frac{x}{R}\right)|\nabla v|^2\,dx.
	\end{align}
	Thus, using the properties of $\phi$ \eqref{phi}-\eqref{nablaphi}, we have
	\begin{align}
	\frac{1}{4}z''_R(\tau)&=
	\int\partial^2_r \phi \left(\frac{x}{R}\right)|\nabla v|^2\,dx
	-\frac{1}{4R^2}\int_{2R\leq |x|\leq 4R}\Delta^2\phi\left(\frac{x}{R}\right)|v|^2\,dx\nonumber\\
	&\quad-\frac{\sigma}{2\sigma+2}\int |x|^{-b}|v|^{2\sigma+2}\Delta\phi\left(\frac{x}{R}\right)\,dx+\frac{R}{2\sigma+2}\int \nabla\left(|x|^{-b}\right)\cdot\nabla \phi\left(\frac{x}{R}\right)|v|^{2\sigma+2}\,dx\nonumber\\
	&\leq\|\nabla v\|_{L^2}^{2}-\frac{1}{4R^2}\int_{2R\leq |x|\leq 4R}\Delta^2\phi\left(\frac{x}{R}\right)|v|^2\,dx\\
	&\quad-\frac{\sigma}{2\sigma+2}\int_{2R\leq|x|\leq4R}|x|^{-b}|v|^{2\sigma+2}\Delta\phi\left(\frac{x}{R}\right)\,dx\\
	&\quad + \frac{R}{2\sigma+2}\int_{2R\leq|x|\leq 4R}\nabla\left(|x|^{-b}\right)\cdot\nabla\phi\left(\frac{x}{R}\right)|v|^{2\sigma+2}\,dx\nonumber\\
	&\quad-\left(\frac{N\sigma+b}{2\sigma+2}\right)\int_{|x|\leq 2R}|x|^{-b}|v|^{2\sigma+2}\,dx.\nonumber
	\end{align}	
Splitting the last integral in the right hand side in the regions $\{|x|\geq 2R\}$ and $\{|x|\leq 2R\}$, we obtain
\begin{align}
	\frac{1}{4}z''_R(\tau)&= \|\nabla v\|_{L^2}^{2}-\frac{1}{4R^2}\int_{2R\leq |x|\leq 4R}\Delta^2\phi\left(\frac{x}{R}\right)|v|^2\,dx+\left(\frac{N\sigma+b}{2\sigma+2}\right)\int_{|x|\geq 2R}|x|^{-b}|v|^{2\sigma+2}\,dx\nonumber\\
	&\quad-\frac{\sigma}{2\sigma+2}\int_{2R\leq|x|\leq4R}|x|^{-b}|v|^{2\sigma+2}\Delta\phi\left(\frac{x}{R}\right)\,dx\\
	&\quad+\frac{R}{2\sigma+2}\int_{2R\leq|x|\leq4R}\nabla\left(|x|^{-b}\right)\cdot\nabla\phi\left(\frac{x}{R}\right)|v|^{2\sigma+2}\,dx\\
	&\quad-\left(\frac{N\sigma+b}{2\sigma+2}\right)\int |x|^{-b}|v|^{2\sigma+2}\,dx\nonumber\\
	&\leq c\left(\frac{1}{R^2}\int_{2R\leq|x|\leq4R}|v|^2\,dx+\int_{|x|\geq R}|x|^{-b}|v|^{2\sigma+2}\,dx\right)\nonumber\\
	&\quad\quad-\frac{bR}{2\sigma+2}\int_{2R\leq|x|\leq4R}|x|^{-b-2}x\cdot\nabla\phi\left(\frac{x}{R}\right)|v|^{2\sigma+2}\,dx\\
	&\quad\quad+\|\nabla v\|_{L^2}^{2}-\left(\frac{N\sigma+b}{2\sigma+2}\right)\int |x|^{-b}|v|^{2\sigma+2}\,dx.\label{estviri}
	\end{align}
Since $|\nabla \phi(x)|\leq c|x|$ from \eqref{nablaphi}, we deduce
	\begin{align}\label{z''}
	\frac{1}{4}z_R''(t)-\|\nabla v\|_{L^2}^2+&\left(\frac{N\sigma+b}{2\sigma+2}\right)\int |x|^{-b}|v|^{2\sigma+2}\,dx\nonumber\\
	&\leq c\left(\frac{1}{R^2}\int_{2R\leq|x|\leq4R}|v|^2\,dx+\int_{|x|\geq R}|x|^{-b}|v|^{2\sigma+2}\,dx\right).
	\end{align}
	Now, the definition of the energy \eqref{Energy} and $s_c=\frac{N}{2}-\frac{2-b}{2\sigma}$ yield
	\begin{equation}
	\|\nabla v\|_{L^2}^2-\left(\frac{N\sigma+b}{2\sigma+2}\right)\int |x|^{-b}|v|^{2\sigma+2}\,dx= 2(\sigma s_c+1)E[v_0]-\sigma s_c\|\nabla v\|_{L^2}^2.
	\end{equation}
and then  \eqref{z''} implies the desired inequality \eqref{estintradial}.
\end{proof}

Now, we are able to prove Theorem \ref{thmblowdisper}. This is an extension of the result obtained by Merle, Rapha\"{e}l, and Szeftel \cite[Theorem 1.1]{MRS2014} for the classical NLS equation to the INLS model and is a direct consequence of Lemma \ref{lemintradial}. 

\begin{proof}[Proof of Theorem \ref{thmblowdisper}] Let $R,\varepsilon>0$ real number to be chosen later. By the Strauss inequality for radial functions \cite{Strauss} (see, for instance, \cite[Lemma 4.1]{FG20} for a proof) we first have
	\begin{align}
	\|u\|_{L^{\infty}(|x|\geq R)}\leq c\frac{\|\nabla u\|_{L^2}^{\frac{1}{2}}\|u\|_{L^2}^{\frac{1}{2}}}{R^{\frac{N-1}{2}}}.
	\end{align}
	Then, by the mass conservation \eqref{Mass} and Young's inequality
	\begin{align}
	\int_{|x|\geq R}|x|^{-b}|u|^{2\sigma+2}\,dx&\leq \frac{1}{R^{b}}\|u\|_{L^{\infty}(|x|\geq R)}^{2\sigma}\|u\|^{2}_{L^2(|x|\geq R)}\leq \frac{C(u_0)}{R^{\sigma(N-1)+b}}\|\nabla u\|_{L^2}^{\sigma}\|u_0\|_{L^2}^{\sigma+2}\\& \leq \varepsilon \|\nabla u\|_{L^2}^2+\frac{C(u_0,\sigma,\varepsilon)}{R^{\frac{2\sigma(N-1)+2b}{2-\sigma}}},\label{case2}
	\end{align}
where we have used that $\sigma<2$.
	Let $\beta=\frac{2-\sigma}{\sigma(N-1)+b}$. Combining inequality \eqref{estintradial}, energy and mass conservation \eqref{Energy}-\eqref{Mass} and the previous inequality we deduce
	\begin{align}
	2\sigma s_c\int |\nabla u|^2
	\,dx+\frac{d}{d\tau}\textit{Im}&\int \nabla \phi_R\nabla u\overline{u}\,dx\\&\leq C_{u_0}\left(
	1+\frac{1}{R^2}+\int_{|x|\geq R}|x|^{-b}|u|^{2\sigma+2}\,dx\right)\nonumber
	\\
	&\leq C_{u_0}\left(1+\frac{1}{R^2}+\varepsilon\int|\nabla u|^2\,dx+\frac{1}{R^{\frac{2}{\beta}}}\right).
	\end{align}
	Fix $\varepsilon>0$ small enough. If $R\ll1$, since $0<\beta<1$, then
	\begin{align}\label{ctR}
	\sigma s_c\int|\nabla u|^2\,dx+\frac{d}{d\tau}\textit{Im}\int\nabla \phi_R\nabla u\overline{u}\,dx\leq \frac{C_{u_0}}{R^{\frac{2}{\beta}}}.
	\end{align}
	Integrating \eqref{ctR} from $t$ to $\tau $ and then from $t$ to $t_2$, in time, we get
	\begin{align}
	\int_{t}^{t_2}(t_{2}-\tau)\|\nabla u(\tau&)\|_{L^2}^2\,d\tau +\frac{1}{2}\int\phi_R|u(t_2)|^2\,dx\\
	&\leq C_{u_0} \frac{(t_2-t)^2}{2R^{\frac{2}{\beta}}}+\frac{1}{2}\int\phi_R|u(t)|^2\,dx+(t_{2}-t)\textit{Im}\int \nabla \phi_R\nabla u \overline{u}(t)\,dx\nonumber\\
	&\leq C_{u_0}\left(\frac{(t_2-t)^2}{R^{\frac{2}{\beta  }}}+R^2+R(t_2-t)\|\nabla u(t)\|_{L^2} \right),
	\end{align}
where we have used Cauchy-Schwarz inequality and the mass conservation \eqref{Mass} in the last inequality. Taking $t_2\to T^{\ast}$ in the above inequality, we conclude that the first integral on the left hand side converges and
	\begin{align}\label{cotaint}
	\int_{t}^{T^{\ast}}(T^{\ast}-\tau)\|\nabla u(\tau&)\|_{L^2}^2\,d\tau
	\leq C_{u_0}\left(\frac{(T^{\ast}-t)^2}{R^{\frac{2}{\beta}}}+R^2+R(T^{\ast}-t)\|\nabla u(t)\|_{L^2}\right).
	\end{align}
	Thus, choosing $R\ll1$ so that the first two terms on the right hand side are equals, more precisely
	\begin{align}
	R(t)=(T^{\ast}-t)^{\frac{\beta}{1+\beta}},
	\end{align}
the inequality \eqref{cotaint} can be rewritten as
	\begin{align}
	\int_{t}^{T^{\ast}}(T^{\ast}-\tau)\|\nabla u(\tau)\|_{L^2}^2\,d\tau
	&\leq C_{u_0}\left((T^{\ast}-t)^{\frac{2\beta}{1+\beta}}+(T^{\ast}-t)^{\frac{\beta}{1+\beta}}(T^{\ast}-t)\|\nabla u(t)\|_{L^2}\right)\\
	&\leq C_{u_0} (T^{\ast}-t)^{\frac{2\beta}{1+\beta}}+(T^{\ast}-t)^2\|\nabla u(t)\|_{L^2}^2.\label{cotaint2}
	\end{align}
where we have used Young's inequality in the last step.
	
	Define
	\begin{align}
	h(t)=\int_{t}^{T^{\ast}}(T^{\ast}-\tau)\|\nabla u(\tau)\|_{L^2}^{2}\,d\tau.
	\end{align}
	Note that inequality \eqref{cotaint2} implies that $h(0)\leq C_{u_0}$ and moreover
	\begin{align}
	h(t)\leq C_{u_0}(T^{\ast}-t)^{\frac{2\beta}{1+\beta}}-(T^{\ast}-t)h'(t),
	\end{align}
	that is,
	\begin{align}
	\left(\frac{h(t)}{T^{\ast}-t}\right)'=\frac{1}{(T^{\ast}-t)^2}((T^{\ast}-t)h'(t)+h(t))\leq C_{u_0} \frac{1}{(T^{\ast}-t)^{\frac{2}{1+\beta}}}.
	\end{align}
	Integrating the above inequality from $0$ to $t$,
	\begin{align}
	\frac{h(t)}{T^{\ast}-t}\leq \frac{h(0)}{T^{\ast}}+ \frac{C_{u_0}}{(T^{\ast}-t)^{\frac{1-\beta}{1+\beta}}}-\frac{C_{u_0}}{(T^{\ast})^{\frac{1-\beta}{1+\beta}}}\leq \frac{C_{u_0}}{(T^{\ast}-t)^{\frac{1-\beta}{1+\beta}}},
	\end{align}
for $t$ close enough to $T^{\ast}$, since $0<\beta<1 $. Therefore
	\begin{align}
	h(t)\leq C_{u_0}(T^{\ast}-t)^{\frac{2\beta}{1+\beta}},
	\end{align}
which implies \eqref{blowupestdisp} and completes the proof.
\end{proof}

\section{The main propositions}\label{sec4}
In this section, we prove two propositions that are the key pieces in the proof of Theorems \ref{scteo1} and \ref{scteo2}. First, we deduce a control on the semi-norm $\rho$ and the $\dot{H}^1$ norm for radially symmetric solutions to \eqref{PVI}, assuming an initial control on the $L^{\sigma_c}$ norm and the energy.
\begin{prop}\label{prop1}
Let $N\geq 3$, $0<b<\min\left\{\frac{N}{2},2\right\}$, $\frac{2-b}{N}<\sigma<\frac{2-b}{N-2}$ and $v(\tau)\in C\left([0,\tau_\ast]: \dot{H}^{s_c}\cap\dot{H}^1\right)$ a radial solution to \eqref{PVI} with initial data $v_0 \in  \dot H^{s_c}\cap \dot H^1$. Assume
	\begin{equation}\label{hpprop1i}
	\tau_\ast^{1-s_c}\max\{E[v_0],0\}<1
	\end{equation}
	and
	\begin{align}\label{hpprop1ii}
	M_0:=\frac{4\|v_0\|_{L^{\sigma_c}}}{\|V\|_{L^{\sigma_c}}}\geq 2,
	\end{align}
where $V$ is a solution to elliptic equation \eqref{elptcpc1} with minimal $L^{\sigma_c}$-norm. Then, there exist universal constants $C_1,\alpha_1,\alpha_2>0$ depending only on $N,\sigma$ and $b$  such that, for all $\tau_0\in [0,\tau_\ast]$, the following uniform control of the semi-norm $\rho$ holds
	\begin{align}\label{prop1ii}
	\rho(v(\tau_0),M_0^{\alpha_1}\sqrt{\tau_0})\leq C_1M_0^2
	\end{align}
and also the global dispersive estimate
	\begin{align}\label{prop1i}
	\int_0^{\tau_0}(\tau_{0}-\tau)\|\nabla v(\tau)\|_{L^2}^2\,d\tau \leq M_0^{\alpha_2}\tau_{0}^{1+s_c}.
	\end{align}
\end{prop}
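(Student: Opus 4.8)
My plan is to combine the localized virial estimate of Lemma~\ref{lemintradial} with the radial interpolation estimate of Lemma~\ref{lemaradialGN}(ii) into a single master differential inequality, and then run a continuity argument in time. First, fixing $R>0$, I would apply Lemma~\ref{lemaradialGN}(ii) to the term $\int_{|x|\ge R}|x|^{-b}|v|^{2\sigma+2}$ on the right-hand side of \eqref{estintradial}, with $\eta=\eta(N,\sigma,b)$ small enough to be absorbed into $2\sigma s_c\|\nabla v\|_{L^2}^2$, and bound $\frac{1}{R^2}\int_{2R\le|x|\le4R}|v|^2$ by $\frac{c}{R^{2(1-s_c)}}\rho(v(\tau),R)$ directly from \eqref{defrho}. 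This produces, for all $R>0$ and $\tau\in[0,\tau_\ast]$,
\[
\sigma s_c\|\nabla v(\tau)\|_{L^2}^2+\frac{d}{d\tau}\,\textit{Im}\!\int\nabla\phi_R\cdot\nabla v(\tau)\overline{v(\tau)}\,dx\le c\,E_0^{+}+\frac{c}{R^{2(1-s_c)}}\,F\big(\rho(v(\tau),R)\big),
\]
where $E_0^{+}=\max\{E[v_0],0\}$, $F(\rho)=\rho+\rho^{\frac{2+\sigma}{2-\sigma}}+\rho^{\sigma+1}$ and $c=c(N,\sigma,b)$. Alongside this I would record, from \eqref{phi'} and the analogous local-mass identity for a smooth annular cutoff $\chi_R$ adapted to $\{R\le|x|\le2R\}$, a flux bound of the form $\big|\frac{d}{d\tau}\,R^{-2s_c}\!\int\chi_R|v(\tau)|^2\,dx\big|\le cR^{-(1+s_c)}\|\nabla v(\tau)\|_{L^2}\,\rho(v(\tau),R/2)^{1/2}$, obtained by Cauchy-Schwarz together with $\|v(\tau)\|_{L^2(R/2\le|x|\le4R)}\le cR^{s_c}\rho(v(\tau),R/2)^{1/2}$ and the monotonicity of $\rho$ in the radius; this controls how fast $L^2$-mass can leak into a dyadic shell.

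For \eqref{prop1ii}, I would start from the observation that at $\tau=0$, Lemma~\ref{lemaradialGN}(i) and hypothesis \eqref{hpprop1ii} give $\rho(v_0,R)\le c\|v_0\|_{L^{\sigma_c}}^2\le cM_0^2$ for \emph{every} $R>0$. Fixing $\tau_0\in(0,\tau_\ast]$, I would then run a continuity argument on $[0,\tau_0]$: on the maximal subinterval where $\rho(v(\tau),M_0^{\alpha_1}\sqrt\tau)\le 2C_1M_0^2$, I insert this bound into the flux estimate (to control the growth of the local mass in each shell of radius $\ge M_0^{\alpha_1}\sqrt\tau$) and into the time-integrated master inequality (to control $\int_0^\tau\|\nabla v\|_{L^2}^2$, which itself feeds the flux estimate), and then choose $\alpha_1=\alpha_1(N,\sigma,b)$ large enough that the radius $M_0^{\alpha_1}\sqrt\tau$ makes every error term strictly smaller than $C_1M_0^2$. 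The energy contribution is controlled because $\tau_\ast^{1-s_c}E_0^{+}<1$ by \eqref{hpprop1i}, while the superlinear terms $\rho^{\frac{2+\sigma}{2-\sigma}}$, $\rho^{\sigma+1}$ are absorbed by the gain $R^{-2(1-s_c)}$ coming from the growing radius (here $s_c>0$ and $\sigma<2$, cf.~\eqref{rema}, are essential). Closing the bootstrap gives \eqref{prop1ii}.

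With \eqref{prop1ii} in hand, I would obtain the dispersive estimate \eqref{prop1i} by integrating the master inequality twice in time on $[0,\tau_0]$, exactly as in the proof of Theorem~\ref{thmblowdisper}, with $R\sim M_0^{\alpha_1}\sqrt{\tau_0}$: the double integration of $\sigma s_c\|\nabla v(\tau)\|_{L^2}^2$ produces the left-hand side of \eqref{prop1i}; the energy term contributes $cE_0^{+}\tau_0^2\le c\,\tau_\ast^{1-s_c}E_0^{+}\,\tau_0^{1+s_c}\le c\,\tau_0^{1+s_c}$ by \eqref{hpprop1i}; the $F(\rho)$-term contributes $\lesssim M_0^{\beta_0}\tau_0^2R^{-2(1-s_c)}\sim M_0^{\beta_0-2\alpha_1(1-s_c)}\tau_0^{1+s_c}$ after using \eqref{prop1ii} and $M_0\ge2$ (with $\beta_0=\frac{2(2+\sigma)}{2-\sigma}$); and the boundary terms $\tau_0\,\textit{Im}\!\int\nabla\phi_R\cdot\nabla v(\tau_0)\overline{v(\tau_0)}\,dx$ and $\int\phi_R|v_0|^2\,dx$ are controlled via $|\nabla\phi_R|^2\lesssim\phi_R\lesssim R^2$, the bound $\|v_0\|_{L^2(|x|\le4R)}\le cR^{s_c}M_0$ from Lemma~\ref{lemaradialGN}(i), \eqref{prop1ii} on the intermediate shells, and Young's inequality. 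Collecting the powers of $M_0$ gives $\alpha_2=\alpha_2(N,\sigma,b)$.

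The hard part will be the bootstrap of the second step: one must juggle three competing scales — the $\tau$-powers from the $\sqrt\tau$-scaling of the radius, the $M_0$-powers from the superlinear terms in Lemma~\ref{lemaradialGN}(ii), and $s_c$ — so that the errors genuinely improve the a priori assumption, while simultaneously controlling $\int_0^\tau\|\nabla v\|_{L^2}^2$ near $\tau=0$, where the time weight degenerates and the natural pointwise bound $\|\nabla v(\tau)\|_{L^2}^2\lesssim\tau^{s_c-1}$ is only borderline integrable against the flux kernel, which forces the use of time-averaged rather than pointwise-in-time dispersive information. Two smaller technical points are that $\tau\mapsto\rho(v(\tau),R)$ is only lower semicontinuous a priori (so the continuity argument should be run with a slightly enlarged family of radii, to accommodate the $R/2$ in the flux bound), and that the momentum-type boundary term at $\tau=0$, which a priori involves $\|\nabla v(0)\|_{L^2}$, has to be handled by a suitable Young splitting against the left-hand side. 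Finally, the inhomogeneity $|x|^{-b}$ enters only through the extra term in \eqref{virialseg}, already estimated in Lemma~\ref{lemintradial}, and through the exponents of Lemma~\ref{lemaradialGN}(ii), so it introduces no new difficulty at this stage.
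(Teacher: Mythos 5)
Your overall architecture (the master inequality obtained by inserting Lemma \ref{lemaradialGN}(ii) into Lemma \ref{lemintradial}, double integration in time, and a continuity/bootstrap argument at the radius $M_0^{\alpha_1}\sqrt{\tau}$) is essentially the paper's (Lemma \ref{lemma34} and Subsection \ref{5.3.3}). However, there is a genuine gap at the decisive step. After the double time integration the right-hand side contains the boundary term $2\tau_0\,\textit{Im}\int\nabla\phi_R\cdot\nabla v_0\overline{v_0}\,dx$, a momentum-type quantity at a \emph{single} time, and your plan to handle it ``by a suitable Young splitting against the left-hand side'' cannot work: Cauchy--Schwarz only gives $\left|\textit{Im}\int\nabla\phi_R\cdot\nabla v_0\overline{v_0}\,dx\right|\le cR^{1+s_c}M_0\|\nabla v_0\|_{L^2}$, and the pointwise-in-time quantity $\|\nabla v_0\|_{L^2}$ is not controlled by the time-averaged left-hand side $\int_0^{\tau_0}(\tau_0-\tau)\|\nabla v(\tau)\|_{L^2}^2\,d\tau$, nor by $M_0$, the energy, or $\tau_\ast$ (no hypothesis links them). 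Any bound retaining $\|\nabla v_0\|_{L^2}$ destroys the universality of $\alpha_2$ in \eqref{prop1i}, which must hold for all $\tau_0\in[0,\tau_\ast]$ with constants depending only on $N,\sigma,b$; the variant of your argument with the momentum evaluated at time $\tau_0$ is worse, since $\|\nabla v(\tau_0)\|_{L^2}$ is completely uncontrolled. Note that \eqref{prop1i} is a genuine time-averaged decay statement ($\tau_0^{1+s_c}\ll\tau_0^{2}$ for large $\tau_0$), so this term is exactly where the content of the proposition lies and cannot be absorbed formally.

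What is missing is the mechanism of the paper's Lemma \ref{lemimint}: one bootstraps \emph{both} the dispersive bound \eqref{dispersioneps}, with the huge constant $G_\varepsilon=M_0^{1/\varepsilon}$, and the $\rho$-bound \eqref{Minftyep}; then, by a pigeonhole in time using \eqref{dispersioneps}, one finds an intermediate time $t_0\sim\varepsilon^{\frac{1}{1+s_c}}\tau_0$ with $\|\nabla v(t_0)\|_{L^2}^2\lesssim G_\varepsilon t_0^{s_c-1}$, estimates the virial quantity at $t_0$ via the local mass control \eqref{intphi1}, and transports it back to $\tau=0$ by integrating the virial identity over $[0,t_0]$ together with $\int_0^{t_0}\|\nabla v\|_{L^2}^2\,d\tau\lesssim G_\varepsilon t_0^{s_c}$; the specific choices of $G_\varepsilon$ and $A_\varepsilon$ in \eqref{Gep}, together with \eqref{hpprop1i}--\eqref{hpprop1ii}, make the resulting bound $cM_0^2A^{2(1+s_c)}\varepsilon^{-\frac{1}{1+s_c}}\tau_0^{s_c}$ small relative to $G_\varepsilon\tau_0^{s_c}$, which is what closes both bootstrap inequalities simultaneously. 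Your proposal bootstraps only the $\rho$-bound (plus an annular flux bound) and intends to recover $\int_0^{\tau}\|\nabla v\|_{L^2}^2$ from the time-integrated master inequality, but that inequality again contains the momentum boundary term, so without the above mechanism the argument is circular. The flux bound you introduce is harmless but redundant (the shell control comes directly from \eqref{lemii}); the piece you still must supply is precisely the control of the initial momentum term with constants depending only on $N,\sigma,b$ and $M_0$.
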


Next, under the same hypothesis of Proposition \ref{prop1} and assuming an additional restriction on the energy, we obtain a lower bound on the $L^2$ norm of the initial data in a suitable closed ball around the origin.
\begin{prop}\label{prop2}
Let $N\geq 3$, $0<b<\min\left\{\frac{N}{2},2\right\}$, $\frac{2-b}{N}<\sigma<\frac{2-b}{N-2}$ and $v\in C\left([0,\tau_*]: \dot H^{s_c}\cap \dot H^1\right)$ a radial solution to \eqref{PVI} with initial data $v_0 \in  \dot H^{s_c}\cap \dot H^1$ such that \eqref{hpprop1ii}, \eqref{prop1ii} and \eqref{prop1i} of Proposition \ref{prop1} hold. Let 
	\begin{align}\label{hpprop2i}
	\tau_0\in\left[0,\frac{\tau_*}{2}\right].
	\end{align}
Define $\lambda_v(\tau)=\|\nabla v(\tau)\|^{-\frac{1}{1-s_c}}_{L^2}$ and assume that
	\begin{align}\label{Ev}
	E[v_0]\leq \frac{\|\nabla v(\tau_0)\|^2_{L^2}}{4}=\frac{1}{4\lambda_v^{2(1-s_c)}(\tau_0)}.
	\end{align}
Then, there exist universal  constants $C_2, \alpha_3>0$ depending only on $N,\sigma$ and $b$ such that if
	\begin{align}\label{F}
	F_*=\frac{\sqrt{\tau_0}}{\lambda_v(\tau_0)}\,\,\,\mbox{ and }\,\,\,D_*=M_0^{\alpha_3}\max[1,F_*^{\frac{1+s_c}{1-s_c}}],
	\end{align}
	then
	\begin{align}\label{tprop2}
	\frac{1}{\lambda_v^{2s_c}(\tau_0)}\int_{|x|\leq D_*\lambda_v(\tau_0)}|v_0|^2\,dx\geq C_2.
	\end{align}
\end{prop}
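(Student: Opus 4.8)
The plan is to argue by contradiction: suppose the conclusion \eqref{tprop2} fails for every universal constant, i.e.\ the localized mass of $v_0$ in the ball $\{|x|\le D_\ast\lambda_v(\tau_0)\}$ is \emph{small} relative to $\lambda_v^{2s_c}(\tau_0)$. The rescaling that naturally appears here is $w(y,s)=\lambda_v^{\frac{2-b}{2\sigma}}(\tau_0)\,v(\lambda_v(\tau_0)y,\lambda_v^2(\tau_0)s)$, which is again a solution of \eqref{PVI} and normalizes the gradient so that $\|\nabla w(0)\|_{L^2}=1$; under this rescaling, assumption \eqref{Ev} becomes $E[w(0)]\le \tfrac14$, the lifespan in the new time variable is $\sim \tau_\ast/\lambda_v^2(\tau_0)\ge 2\tau_0/\lambda_v^2(\tau_0)=2F_\ast^2$, and the quantity $D_\ast$ is exactly the rescaled radius $D_\ast\lambda_v(\tau_0)\mapsto D_\ast$. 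So the contradiction hypothesis reads: $\int_{|y|\le D_\ast}|w(0,y)|^2\,dy$ is small.

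First I would run the localized virial estimate of Lemma \ref{lemintradial} for $w$ with radius $R=D_\ast$, integrated twice in time over $[0,T]$ with $T\sim F_\ast$ (which is $\le$ half the rescaled lifespan, so this is legitimate). The left side produces $2\sigma s_c\int_0^T(T-s)\|\nabla w(s)\|_{L^2}^2\,ds$ plus the boundary terms $\mathrm{Im}\int\nabla\phi_{D_\ast}\cdot\nabla w\bar w$ at times $0$ and $T$, minus $4(\sigma s_c+1)E[w(0)]\,T^2/2$. Using $E[w(0)]\le\frac14$, the Cauchy--Schwarz bound $|\mathrm{Im}\int\nabla\phi_{D_\ast}\cdot\nabla w\bar w|\le cD_\ast\|\nabla w\|_{L^2}\|w\|_{L^2(|x|\le 4D_\ast)}$, and crucially Lemma \ref{lemaradialGN}-(ii) to absorb $\int_{|x|\ge D_\ast}|x|^{-b}|w|^{2\sigma+2}$ into $\eta\|\nabla w\|_{L^2}^2$ plus a $\rho$-dependent remainder controlled by \eqref{prop1ii} (rescaled: $\rho(w(0),\cdot)\le C_1M_0^2$), I would get an inequality of the form
\begin{align*}
\int_0^T(T-s)\|\nabla w(s)\|_{L^2}^2\,ds \le c\Big(T^2 + D_\ast\sqrt{T}\,\big(\textstyle\int_0^T\|\nabla w\|_{L^2}^2\big)^{1/2}\|w_0\|_{L^2(|x|\le 4D_\ast)} + \tfrac{T^2}{D_\ast^{2(1-s_c)}}(M_0)^{\#}\Big),
\end{align*}
where the middle term contains the small localized mass from the contradiction hypothesis. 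The global dispersive estimate \eqref{prop1i}, again rescaled, gives $\int_0^T(T-s)\|\nabla w\|_{L^2}^2\,ds\le M_0^{\alpha_2}T^{1+s_c}$, which I will not use to \emph{lower}-bound but the complementary fact I need is a \emph{lower} bound on $\int_0^T(T-s)\|\nabla w(s)\|_{L^2}^2\,ds$: since $\|\nabla w(0)\|_{L^2}=1$, continuity of the flow in $\dot H^1$ on a short interval (with length quantified by the local theory, i.e.\ a time $\sim (M_0)^{-\#}$) forces $\|\nabla w(s)\|_{L^2}\ge \frac12$ on $[0,c_0 M_0^{-\#}]$, hence $\int_0^T(T-s)\|\nabla w(s)\|_{L^2}^2\,ds\ge c\min(T,M_0^{-\#})^2$. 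Choosing $T=T(M_0)$ at this scale and comparing the lower bound with the upper bound, the $T^2$ and the $\rho$-remainder terms are made subdominant by picking $D_\ast=M_0^{\alpha_3}\max(1,F_\ast^{(1+s_c)/(1-s_c)})$ with $\alpha_3$ large enough; what remains is that the middle term must be $\gtrsim$ a positive universal constant, which forces $\|w_0\|_{L^2(|x|\le 4D_\ast)}^2$ to be bounded \emph{below} by a universal constant — contradicting the contradiction hypothesis, and after undoing the rescaling this is exactly \eqref{tprop2}.

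The main obstacle I anticipate is bookkeeping the powers of $M_0$ and $F_\ast$ so that $D_\ast$ and $T$ can be chosen to make every error term strictly subordinate while the favorable lower bound on $\int(T-s)\|\nabla w\|^2$ survives; this is where the precise form $D_\ast=M_0^{\alpha_3}\max(1,F_\ast^{(1+s_c)/(1-s_c)})$ comes from, and getting the exponent $(1+s_c)/(1-s_c)$ right requires balancing $T^2/D_\ast^{2(1-s_c)}$ (the $\rho$-remainder, which scales like $F_\ast^2/D_\ast^{2(1-s_c)}$ after $T\sim F_\ast\cdot M_0^{-\#}$) against a constant. A secondary subtlety is that the inhomogeneous weight $|x|^{-b}$ enters the virial identity both through $\Delta\phi$ and through $\nabla(|x|^{-b})\cdot\nabla\phi$; both extra terms are supported where $|x|\ge 2R$, so they are already folded into the right-hand side of \eqref{estintradial}, and no new difficulty arises there beyond using Lemma \ref{lemaradialGN}-(ii) exactly as stated.
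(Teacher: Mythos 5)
Your scheme has genuine gaps, and the two most serious ones are structural. First, the rescaling is misused: the factor $\lambda_v(\tau_0)$ normalizes the gradient of $v$ \emph{at time $\tau_0$}, not at time $0$, so your claim $\|\nabla w(0)\|_{L^2}=1$ is false; $\|\nabla w(0)\|_{L^2}=\lambda_v(\tau_0)^{1-s_c}\|\nabla v_0\|_{L^2}$ is an uncontrolled quantity. This breaks your estimate of the virial boundary term at time $0$ (Cauchy--Schwarz gives $cD_*\|\nabla w(0)\|_{L^2}\|w(0)\|_{L^2(|x|\le 4D_*)}$, and the gradient factor is not bounded by anything universal), and it also invalidates the key step of your contradiction: the persistence bound $\|\nabla w(s)\|_{L^2}\ge \tfrac12$ on an interval of length $\sim M_0^{-\#}$. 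Even if the gradient were normalized at $s=0$, no such quantitative persistence is available: the paper has no local Cauchy theory in $\dot H^1$ alone (this is precisely why assumption \eqref{esttl} is imposed in Theorem \ref{scteo2}), and the $\dot H^{s_c}\cap\dot H^1$ theory gives a lifespan depending on the full norm, which $M_0$ (an $L^{\sigma_c}$ bound only, via \eqref{hpprop1ii}) does not control. Without a lower bound on $\int_0^T(T-s)\|\nabla w(s)\|_{L^2}^2\,ds$ in terms of universal constants, your comparison argument collapses. (There are also smaller inconsistencies: the rescaled time corresponding to $\tau_0$ is $F_*^2$, not $F_*$, and you choose $T\sim F_*$ at one point and $T\sim M_0^{-\#}$ at another.)

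Second, you never use the actual content of \eqref{Ev}, which is that the energy is at most a quarter of the \emph{kinetic energy at time $\tau_0$}. The paper's proof rescales the single time slice $v(\tau_0)$ so that $\|\nabla w\|_{L^2}=1$ and $E[w]\le \tfrac14$; then the potential energy of $w$ is at least $\tfrac{\sigma+1}{2}$, the radial estimate \eqref{GNradial} together with the $\rho$-bound \eqref{prop1ii} shows it cannot live in $\{|x|\ge J_*\}$ for a suitable $J_*=J_*(M_0,F_*)$, and the Gagliardo--Nirenberg inequality of \cite{Farah} converts the localized potential energy into a universal lower bound on $\lambda_v^{-2s_c}(\tau_0)\int_{|x|\le 2J_*\lambda_v(\tau_0)}|v(\tau_0)|^2\,dx$. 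The step you are missing entirely is the transfer of this bound from time $\tau_0$ back to time $0$: one estimates $\frac{d}{d\tau}\int\varphi_{\tilde R}|v|^2\,dx$ by $c M_0\tilde R^{-(1-s_c)}\|\nabla v(\tau)\|_{L^2}$ (using \eqref{prop1ii}) and integrates, controlling $\int_0^{\tau_0}\|\nabla v\|_{L^2}\,d\tau$ through the dispersive bound \eqref{prop1i} on $[0,2\tau_0]$ — this is exactly where the hypothesis $\tau_0\le\tau_*/2$ and the exponent $\tfrac{1+s_c}{1-s_c}$ in $D_*$ come from. So the exponent you tried to recover by balancing error terms in a forward-in-time virial argument is in fact produced by a mass-flux estimate between $0$ and $\tau_0$, an ingredient absent from your proposal.
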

We postpone the proofs of the above propositions to the subsection \ref{5.3.3}. Next, we state and prove some preliminary results. 
\subsection{Auxiliary lemmas}

In the next result we use Lemma \ref{lemaradialGN} and Lemma \ref{lemintradial} to deduce a first control in the quantities that appears in the left hand side of \eqref{prop1i} and in the definition of the semi norm $\rho$ \eqref{defrho} for a suitable choice of $R>0$.


\begin{lemma}\label{lemma34} Let $v\in C([0,\tau_*]: \dot H^{s_c}\cap \dot H^1 )$ be a radially symmetric solution to \eqref{PVI} with initial data $v_0\in \dot H^{s_c}\cap \dot H^1$. For all $A>0$ and $\tau_0\in [0,\tau_{\ast}]$, let $R=A\sqrt{\tau_0}$ and $M_{\infty}$ defined by
	\begin{align}\label{Minfty}
	M^2_{\infty}(A,\tau_0)=\max_{\tau\in[0,\tau_0]}\rho(v(\tau),A\sqrt{\tau}).
	\end{align}
Then, there exists a universal constant $c>0$ such that
	\begin{align}
	2\sigma s_c\int_{0}^{\tau_0}(\tau_0-\tau)&\|\nabla v(\tau)\|_{L^2}^2\,d\tau\\&\leq  c\tau_0^{1+s_c}\left[A^{2(1+s_c)}\|v_0\|_{L^{\sigma_c}}^2+\frac{[M^2_{\infty}(A,\tau_0)]^{\frac{2+\sigma}{2-\sigma}}+M^2_{\infty}(A,\tau_0)}{A^{2(1-s_c)}}\right]\nonumber\\
	&\quad\quad+2\tau_0\left[\textit{Im}\int \nabla\phi_R\cdot\nabla v_0\overline{v_0}\,dx+2\tau_0(\sigma s_c+1) E[v_0]\right] \label{lemi}
	\end{align}
	and
	\begin{align}
	\frac{1}{R^{2s_c}}\int_{R\leq |x|\leq 2R}&|v(\tau_0)|^2\,dx\\&\leq c\|v_0\|_{L^{\sigma_c}}^2+\frac{c}{A^4}\left[[M^2_{\infty}(A,\tau_0)]^{\frac{2+\sigma}{2-\sigma}}+M^2_{\infty}(A,\tau_0)\right]\\
	&\quad\quad+\frac{4}{\tau_0^{s_c}A^{2(1+s_c)}}\left[\textit{Im}\int \nabla\phi_R\cdot\nabla v_0\overline{v_0}\,dx+2\tau_0(\sigma s_c+1)  E[v_0]\right]. \,\,\,\,\,\, \label{lemii}
	\end{align}
\end{lemma}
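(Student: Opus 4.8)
The plan is to combine the virial‑type estimate of Lemma~\ref{lemintradial} with the radial interpolation inequality \eqref{GNradial}, evaluated at the specific radius $R=A\sqrt{\tau_0}$, and then to integrate the resulting differential inequality twice in time. Throughout I write $y(\tau)=z_R(\tau)=\int\phi_R|v(\tau)|^2\,dx\ge 0$ and recall from \eqref{phi'}--\eqref{virialseg} that $y'(\tau)=2\,\textit{Im}\int\nabla\phi_R\cdot\nabla v(\tau)\overline{v(\tau)}\,dx$ and $\frac{d}{d\tau}\textit{Im}\int\nabla\phi_R\cdot\nabla v(\tau)\overline{v(\tau)}\,dx=\tfrac12 y''(\tau)$.

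\textbf{Step 1 (a localized differential inequality).} I would apply \eqref{estintradial} with $R=A\sqrt{\tau_0}$ (admissible since $0\le\tau\le\tau_0\le\tau_*$) and bound its nonlinear term $\int_{|x|\ge R}|x|^{-b}|v|^{2\sigma+2}\,dx$ by \eqref{GNradial}. Picking $\eta$ universal and small enough that the contribution $c\eta\|\nabla v\|_{L^2}^2$ is absorbed into $2\sigma s_c\|\nabla v\|_{L^2}^2$, and noting that $\{2R\le|x|\le 4R\}=\{R'\le|x|\le 2R'\}$ with $R'=2R\ge R$ so that $\frac{1}{R^2}\int_{2R\le|x|\le 4R}|v|^2\,dx\le\frac{c}{R^{2(1-s_c)}}\rho(v(\tau),R)$, I obtain for all $\tau\in[0,\tau_0]$ a bound of the form
\begin{equation*}
\tfrac12 y''(\tau)+\sigma s_c\|\nabla v(\tau)\|_{L^2}^2\le 4(\sigma s_c+1)E[v_0]+\frac{c}{R^{2(1-s_c)}}\left([\rho(v(\tau),R)]^{\frac{2+\sigma}{2-\sigma}}+[\rho(v(\tau),R)]^{\sigma+1}+\rho(v(\tau),R)\right).
\end{equation*}
Since $\rho(u,\cdot)$ is nonincreasing in its second argument and $A\sqrt{\tau}\le R$ for $\tau\le\tau_0$, one has $\rho(v(\tau),R)\le\rho(v(\tau),A\sqrt{\tau})\le M^2_\infty(A,\tau_0)$ by \eqref{Minfty}; combined with the elementary inequality $x^{\sigma+1}\le x^{\frac{2+\sigma}{2-\sigma}}+x$ for all $x\ge 0$ (here $\sigma<2$, cf.~\eqref{rema}, gives $\sigma+1<\frac{2+\sigma}{2-\sigma}$, and one checks the inequality separately for $x\ge 1$ and for $0\le x\le 1$) the parenthesis is at most $2K$, where $K:=[M^2_\infty(A,\tau_0)]^{\frac{2+\sigma}{2-\sigma}}+M^2_\infty(A,\tau_0)$. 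Writing $R^{2(1-s_c)}=A^{2(1-s_c)}\tau_0^{1-s_c}$, this reads $y''(\tau)+2\sigma s_c\|\nabla v(\tau)\|_{L^2}^2\le 2B$ on $[0,\tau_0]$, with $B:=4(\sigma s_c+1)E[v_0]+cKA^{-2(1-s_c)}\tau_0^{-(1-s_c)}$.

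\textbf{Step 2 (double integration and proof of \eqref{lemi}).} Integrating the last inequality from $0$ to $s$, then from $0$ to $\tau_0$, and using Fubini to rewrite $\int_0^{\tau_0}\int_0^s\|\nabla v(\tau)\|_{L^2}^2\,d\tau\,ds=\int_0^{\tau_0}(\tau_0-\tau)\|\nabla v(\tau)\|_{L^2}^2\,d\tau$, I arrive at the master inequality
\begin{equation*}
y(\tau_0)+2\sigma s_c\int_0^{\tau_0}(\tau_0-\tau)\|\nabla v(\tau)\|_{L^2}^2\,d\tau\le B\tau_0^2+y(0)+\tau_0 y'(0).
\end{equation*}
Here $\tau_0 y'(0)=2\tau_0\,\textit{Im}\int\nabla\phi_R\cdot\nabla v_0\overline{v_0}\,dx$ and $B\tau_0^2=4(\sigma s_c+1)E[v_0]\tau_0^2+cKA^{-2(1-s_c)}\tau_0^{1+s_c}$, so the energy part of $B\tau_0^2$ together with $\tau_0 y'(0)$ equals exactly $2\tau_0[\textit{Im}\int\nabla\phi_R\cdot\nabla v_0\overline{v_0}\,dx+2\tau_0(\sigma s_c+1)E[v_0]]$. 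Since $\phi_R(x)\le c|x|^2$ and $\phi_R$ vanishes for $|x|\ge 4R$, the interpolation bound \eqref{radial1} gives $y(0)\le cR^2\int_{|x|\le 4R}|v_0|^2\,dx\le cR^{2+2s_c}\|v_0\|_{L^{\sigma_c}}^2=cA^{2(1+s_c)}\tau_0^{1+s_c}\|v_0\|_{L^{\sigma_c}}^2$. Dropping the nonnegative term $y(\tau_0)$ from the master inequality yields \eqref{lemi}.

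\textbf{Step 3 (proof of \eqref{lemii}) and the main difficulty.} For \eqref{lemii} I would instead drop the nonnegative space‑time integral in the master inequality, which gives $y(\tau_0)\le cA^{2(1+s_c)}\tau_0^{1+s_c}\|v_0\|_{L^{\sigma_c}}^2+cKA^{-2(1-s_c)}\tau_0^{1+s_c}+2\tau_0[\textit{Im}\int\nabla\phi_R\cdot\nabla v_0\overline{v_0}\,dx+2\tau_0(\sigma s_c+1)E[v_0]]$. On the annulus $\{R\le|x|\le 2R\}$ one has $\phi_R(x)=\tfrac12|x|^2\ge\tfrac12 R^2$ by \eqref{phi}, hence $\frac{1}{R^{2s_c}}\int_{R\le|x|\le 2R}|v(\tau_0)|^2\,dx\le\frac{2}{R^{2+2s_c}}y(\tau_0)=\frac{2}{A^{2(1+s_c)}\tau_0^{1+s_c}}y(\tau_0)$; substituting the bound for $y(\tau_0)$ and using $A^{2(1+s_c)}A^{2(1-s_c)}=A^4$, the three resulting terms are precisely the right‑hand side of \eqref{lemii}. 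The routine part is the bookkeeping of the double integration in Steps 2--3; the one delicate point is Step 1, namely reconciling the fixed radius $R=A\sqrt{\tau_0}$ forced by the virial identity with the moving radius $A\sqrt{\tau}$ built into the definition \eqref{Minfty} of $M_\infty$ (handled by the monotonicity of $\rho$ in its second argument) and compressing the two powers of $\rho$ produced by \eqref{GNradial} together with the linear boundary‑mass contribution into the single quantity $K$ — this is exactly where the constraint $\sigma<2$ is used. Keeping $\eta$ universal throughout guarantees that all constants depend only on $N,\sigma,b$.
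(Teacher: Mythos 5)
Your proposal is correct and follows essentially the same route as the paper: it combines the virial estimate \eqref{estintradial} with the radial inequality \eqref{GNradial} at the fixed radius $R=A\sqrt{\tau_0}$, uses the monotonicity of $\rho$ to pass to $M_\infty^2(A,\tau_0)$, integrates twice in time (with the same Fubini identity), bounds $\int\phi_R|v_0|^2$ via \eqref{radial1}, and obtains \eqref{lemii} from the lower bound $\phi_R\geq R^2/2$ on the annulus $\{R\leq|x|\leq 2R\}$. The only differences — working directly with $y''$ instead of the first-order virial quantity, and making explicit the elementary inequality that compresses $[\rho]^{\sigma+1}$ into $[\rho]^{\frac{2+\sigma}{2-\sigma}}+\rho$ — are cosmetic.
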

\begin{proof}
	By the definition of the semi-norm $\rho$ and $M_{\infty}$ with $R=A\sqrt{\tau_0}$, see \eqref{defrho} and \eqref{Minfty} respectively, we have
	\begin{align}
	\rho(v(\tau),R)=\rho(v(\tau),A\sqrt{\tau_0})\leq \rho(v(\tau),A\sqrt{\tau})\leq M_{\infty}^2(A,\tau_0), \,\,\mbox{for all}\,\,\tau \in[0,\tau_0].
	\end{align}
	Thus, we deduce the following estimate
	\begin{align}
	\frac{1}{R^{2}}\int_{2R\leq|x|\leq 4R} |v|^2\,dx&\leq \frac{(2R)^{2s_c}}{R^2}\frac{1}{(2R)^{2s_c}}\int_{2R\leq|x|\leq 4R} |v|^2\,dx\leq \frac{c}{R^{2(1-s_c)}}\rho(v(\tau),2R)\\&\leq \frac{c}{R^{2(1-s_c)}}\rho(v(\tau),R)\leq \frac{c}{R^{2(1-s_c)}}M_{\infty}^2(A,\tau_0).\label{Minfty2}
	\end{align}
	From estimate \eqref{GNradial} with $R=A\sqrt{\tau_0}$, for all $\tau\in [0,\tau_0]$ and $\eta>0$ we obtain
	\begin{align}
	\int_{|x|\geq R}|x|^{-b}|v&|^{2\sigma+2}\,dx\\&\leq \eta\int_{|x|\geq R}|\nabla v|^2\,dx+\frac{C_\eta}{R^{2(1-s_c)}}\left[[\rho(v(\tau),R)]^{\frac{2+\sigma}{2-\sigma}}+[\rho(v(\tau),R)]^{\sigma+1}\right]\nonumber\\
	&\leq \eta\int_{|x|\geq R}|\nabla v|^2\,dx+C_\eta\frac{[M^2_{\infty}(A,\tau_0)]^{\frac{2+\sigma}{2-\sigma}}+[M^2_{\infty}(A,\tau_0)]^{\sigma+1}}{R^{2(1-s_c)}}.\label{GN1}
	\end{align}
	Choosing $\eta>0$ small enough, we inject \eqref{GN1} and \eqref{Minfty2} into \eqref{estintradial} to deduce
	\begin{align}
	\sigma s_c\int|\nabla v |^2\,dx+\frac{d}{d\tau}\textit{Im}\left(\int\nabla \phi_R\cdot\nabla v\overline{v}\,dx\right)
	&-4(\sigma s_c+1)E[v_0]\\&\leq c \frac{[M^2_{\infty}(A,\tau_0)]^{\frac{2+\sigma}{2-\sigma}}+M^2_{\infty}(A,\tau_0)}{R^{2(1-s_c)}},\nonumber
	\end{align}
	where we use $\frac{2+\sigma}{2-\sigma}>\sigma+1>1$.
	Given $\tau\in [0,\tau_0]$ and integrating the above inequality from $0$ to $\tau$, we get
	\begin{align}
	\sigma s_c\int_0^\tau\|\nabla v(s)\|^2_{L^2}\,ds&+\textit{Im}\left(\int \nabla \phi_R\cdot\nabla v(\tau)\overline{v(\tau)}\,dx\right)\\
	&\leq \textit{Im}\left(\int \nabla\phi_R\cdot \nabla v_0\overline{v_0}\,dx\right)+4\tau(\sigma s_c+1)E[v_0]\\
	&\quad +c\tau\frac{[M^2_{\infty}(A,\tau_0)]^{\frac{2+\sigma}{2-\sigma}}+M^2_{\infty}(A,\tau_0)}{R^{2(1-s_c)}}.
	\end{align}
	We integrate one more time from $0$ to $\tau_0$ and use \eqref{phi'} to obtain
	\begin{align}
	\sigma s_c\int_{0}^{\tau_0}\int_0^\tau\|\nabla v(s)\|^2_{L^2}\,ds d\tau&+\frac{1}{2}\int\phi_R|v(\tau_0)|^2\,dx-\frac{1}{2}\int\phi_R|v_0|^2\,dx\\
	&\leq \tau_0\textit{Im}\left(\int \nabla\phi_R\cdot \nabla v_0\overline{v_0}\,dx\right)+2\tau_0^2(\sigma s_c+1)E[v_0]\nonumber\\
	&\quad +c\frac{\tau_0^2}{2}\frac{[M^2_{\infty}(A,\tau_0)]^{\frac{2+\sigma}{2-\sigma}}+M^2_{\infty}(A,\tau_0)}{R^{2(1-s_c)}}.\label{dblinte}
	\end{align}
	Now, note that integration by parts yields
	\begin{align}
	\int_{0}^{\tau_0}\int_{0}^{\tau}\|\nabla v(s)\|_{L^2}^2\,ds d\tau&=\int_{0}^{\tau_0}1\int_{0}^{\tau} \|\nabla v(s)\|_{L^2}^2\,ds d\tau\\
	&=\tau_0\int_{0}^{\tau_0}\|\nabla v(\tau)\|_{L^2}^{2}\,d\tau-\int_{0}^{\tau_0}\tau\|\nabla v(\tau)\|^2_{L^2}\,d\tau\\
	&=\int_{0}^{\tau_0}(\tau_0-\tau)\|\nabla v(\tau)\|_{L^2}^{2}\,d\tau.
	\end{align}
	Then, we can rewrite \eqref{dblinte} as
	\begin{align}
	2\sigma s_c\int_{0}^{\tau_0}(\tau_0-\tau)&\|\nabla v(\tau)\|_{L^2}^{2}\,d\tau+\int \phi_R|v(\tau_0)|^2\,dx\\
	&\leq \int \phi_R|v_0|^2\,dx+2\tau_0\left[\textit{Im}\left(\int \nabla\phi_R\cdot\nabla v_0\overline{v_0}\,dx\right)+2\tau_0(\sigma s_c+1)E[v_0]\right]\nonumber\\
	&\quad+c\tau_0^2\frac{[M^2_{\infty}(A,\tau_0)]^{\frac{2+\sigma}{2-\sigma}}+M_{\infty}^2(A,\tau_0)}{R^{2(1-s_c)}}\\
	& \leq cR^{2(1+s_c)}\|v_0\|_{L^{\sigma_c}}^2+2\tau_0\left[\textit{Im}\left(\int \nabla\phi_R\cdot\nabla v_0\overline{v_0}\,dx\right)+2\tau_0(\sigma s_c+1)E[v_0]\right]\nonumber\\
	&\quad+c\tau_0^2\frac{[M^2_{\infty}(A,\tau_0)]^{\frac{2+\sigma}{2-\sigma}}+M_{\infty}^2(A,\tau_0)}{R^{2(1-s_c)}}\label{prinlem01},
	\end{align}
	where we have used, from inequality \eqref{radial1}, that
	\begin{align}\label{phiR1}
	\int\phi_R|v_0|^2\,dx\leq cR^{2}\int_{|x|\leq 4R}|v_0|^2\,dx\leq cR^{2(1+s_c)}\|v_0\|_{L^{\sigma_c}}^2.
	\end{align}
	Now, since $R=A\sqrt{\tau_0}$, we deduce from \eqref{prinlem01} that
	\begin{align}
	2\sigma s_c\int_{0}^{\tau_0}(\tau_0-\tau)&\|\nabla v(\tau)\|_{L^2}^{2}\,d\tau+\int \phi_R|v(\tau_0)|^2\,dx\\
	& \leq c\tau_0^{1+s_c}A^{2(1+s_c)}\|v_0\|_{L^{\sigma_c}}^2+2\tau_0\left[\textit{Im}\left(\int \nabla\phi_R\cdot\nabla v_0\overline{v_0}\,dx\right)+2\tau_0(\sigma s_c+1)E[v_0]\right]\nonumber\\&\quad+c\tau_0^{1+s_c}\frac{[M^2_{\infty}(A,\tau_0)]^{\frac{2+\sigma}{2-\sigma}}+M_{\infty}^2(A,\tau_0)}{A^{2(1-s_c)}}\label{prinlem},
	\end{align}
	which yields \eqref{lemi}. 
	
	Next, observe that for all $R>0$ the definition of $\phi$ (see \eqref{phi}) yields
	\begin{align}
	\frac{1}{R^{2(1+s_c)}}\int \phi_R|v(\tau_0)|^2\,dx&=\frac{1}{R^{2s_c}}\int_{|x|\leq 4R} \phi\left(\frac{x}{R}\right)|v(\tau_0)|^2\,dx\\
	&\geq \frac{1}{R^{2s_c}}\int_{R\leq |x|\leq 2R}\frac{|x|^2}{2R^2}|v(\tau_0)|^{2}\,dx\geq \frac{1}{2R^{2s_c}}\int_{R\leq|x|\leq 2R}|v(\tau_0)|^2\,dx.
	\end{align}
	Dividing inequality \eqref{prinlem} by $R^{2(1+s_c)}$ where $R=A\sqrt{\tau_0}$, we have
	\begin{align}
	\frac{1}{2R^{2s_c}}\int_{R\leq|x|\leq 2R}&|v(\tau_0)|^2\,dx\\
	&\leq c\|v_0\|_{L^{\sigma_c}}^2+\frac{c}{A^4}\left[[M^2_{\infty}(A,\tau_0)]^{\frac{2+\sigma}{2-\sigma}}+M^2_{\infty}(A,\tau_0)\right]\nonumber\\
	&\quad+\frac{2}{\tau_0^{s_c}A^{2(1+s_c)}}\left[\textit{Im}\left(\int \nabla\phi_R\cdot\nabla v_0\overline{v_0}\,dx\right)+2\tau_0(\sigma s_c+1)E[v_0]\right],
	\end{align}
which implies \eqref{lemii} and concludes the proof.
\end{proof}
In what follows, let $v\in C\left([0,\tau_*]:\dot H^{s_c}\cap \dot H^1 \right)$ be a solution to \eqref{PVI} with initial data $v_0\in \dot H^{s_c}\cap \dot H^1$ and $\varepsilon>0$ a fixed small enough real number to be chosen later. For $M_0$ as in \eqref{hpprop1ii}
, we define
\begin{align}\label{Gep}
G_{\varepsilon}=M_0^{\frac{1}{\varepsilon}}\,\,\, \mbox{and}\,\,\,A_{\varepsilon}=\left(\frac{\varepsilon G_{\varepsilon}}{M_0^2}\right)^{\frac{1}{2(1+s_c)}}.
\end{align}

Consider the following estimates
\begin{align}\label{dispersioneps}
\int_{0}^{\tau_0}(\tau_0-\tau)\|\nabla v(\tau)\|_{L^2}^{2}\,d\tau\leq G_{\varepsilon}\tau_0^{1+s_c}
\end{align}
and 
\begin{align}\label{Minftyep}
M_\infty^2(A_\varepsilon,\tau_0)\leq \frac{2M_0^2}{\varepsilon}.
\end{align}
We define 
\begin{align}\label{tau11}
S_{\varepsilon}= \left\{\tau\in [0,\tau_*];\,\,\eqref{dispersioneps}\mbox{ and }\eqref{Minftyep} \mbox{ hold for all }\tau_0\in [0,\tau]\right\}
\end{align}
and 
\begin{align}\label{tau1}
\tau_1=\max_{\tau\in [0,\tau_*]}S_{\varepsilon}.
\end{align}
Note that $S_{\varepsilon}\neq \emptyset$ for $\varepsilon>0$ sufficiently small. In fact, by the regularity of $v(\tau)$, there exists $0<\delta'<\min\{1,\tau_*\}$ such that
\begin{align}
\|\nabla v(\tau)\|_{L^2}^2<1+\|\nabla v_0\|_{L^2}^2,\,\,\,\mbox{for all}\,\,\, \tau\in [0,\delta'].
\end{align}
Next, multiplying the last inequality by $(\tau_0-\tau)$ and integrating from $0$ to $\tau_0$ with $\tau_0\in [0,\delta']$, we have
\begin{align}
\int_{0}^{\tau_0}(\tau_0-\tau)\|\nabla v(\tau)\|_{L^2}^2\,d\tau\leq \left(\frac{1+\|\nabla v_0\|_{L^2}^2}{2}\right)\tau_0^{1+s_c}\leq M_0^{\frac{1}{\varepsilon}}\tau_0^{1+s_c},
\end{align}
for sufficiently small $\varepsilon>0$, since $M_0\geq 2$. Furthermore, we recall from \eqref{radial1} and \eqref{hpprop1ii} that there exist universal constants $c,C>0$ such that for every $R>0$
\begin{align}
\rho(v_0,R)\leq c\|v_0\|_{L^{\sigma_c}}^2\leq C M_0^2.
\end{align}
Again using the regularity of $v\in C([0,\tau_*]:\dot H^{s_c}\cap\dot H^1 )$ and definition \eqref{Minfty}, there exists $0<\delta''\leq\tau_*$ such that for all $\tau\in [0,\delta'']$
\begin{align}
M_{\infty}^2(A_{\varepsilon},\tau)<2CM_0^2\leq \frac{2M_0^2}{\varepsilon}, 
\end{align} 
for sufficiently small $\varepsilon>0$.
Hence, there exists $\varepsilon_0>0 $, depending only on $v_0$, such that $\min\{\delta',\delta''\}\in S_{\varepsilon}$, for all $\varepsilon\leq \varepsilon_0$. In the next lemma, reducing this $\varepsilon_0>0$ if necessary we obtain an initial control on the virial quantity \eqref{phi'}, assuming an a priori bound on the energy.


\begin{lemma}\label{lemimint}
	Consider $N\geq 3$, $0<b<\min\left\{\frac{N}{2},2\right\}$ and $\frac{2-b}{N}<\sigma<\frac{2-b}{N-2}$. Let $v_0\in \dot H^{s_c}\cap \dot H^1 $ radially symmetric and $v\in C([0,\tau_*]: \dot H^{s_c}\cap \dot H^1 )$ the corresponding solution to \eqref{PVI}. In addition, assume the energy control \eqref{hpprop1i} and let $\tau_1>0$ and $A_\varepsilon>0$ given by \eqref{tau1} and \eqref{Gep}, respectively.
Then, there exists $\varepsilon_1\leq \varepsilon_0$ small enough and $c>0$ a universal constant such that, for all $\tau_0\in[0,\tau_1]$, $0<\varepsilon\leq \varepsilon_1$, $A\geq A_\varepsilon$ and $R=A\sqrt{\tau_0}$, the following inequality holds
	\begin{align}\label{ImE}
	\textit{Im}\int \nabla \phi_R\cdot\nabla v_0\overline{v_0}\,dx+2\tau_0(\sigma s_c+1)E[v_0]\leq c \frac{M_0^2A^{2(1+s_c)}}{\varepsilon^{\frac{1}{1+s_c}}}\tau_0^{s_c}.
	\end{align}
\end{lemma}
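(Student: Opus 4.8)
The plan is to bound separately the momentum-type quantity $\textit{Im}\int\nabla\phi_R\cdot\nabla v_0\overline{v_0}\,dx$ and the energy term $2\tau_0(\sigma s_c+1)E[v_0]$ appearing on the left of \eqref{ImE}, and then to absorb both into the claimed right-hand side by choosing $\varepsilon$ sufficiently small.

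For the momentum term I would use $|\textit{Im}\,z|\le|z|$, the Cauchy--Schwarz inequality, and the rescaled version $|\nabla\phi_R|^2\le c\,\phi_R$ of the property $|\nabla\phi|^2\le c\,\phi$ in \eqref{nablaphi}, to get
\begin{align*}
\left|\textit{Im}\int\nabla\phi_R\cdot\nabla v_0\overline{v_0}\,dx\right|\le c\int\phi_R^{1/2}\,|\nabla v_0|\,|v_0|\,dx\le c\,\|\nabla v_0\|_{L^2}\left(\int\phi_R|v_0|^2\,dx\right)^{1/2}.
\end{align*}
Since $\phi_R$ is supported in $\{|x|\le 4R\}$ and $\phi_R(x)\le c|x|^2$, inequality \eqref{radial1} of Lemma \ref{lemaradialGN} gives $\int\phi_R|v_0|^2\,dx\le c\,R^{2(1+s_c)}\|v_0\|_{L^{\sigma_c}}^2$, while the definition \eqref{hpprop1ii} of $M_0$ yields $\|v_0\|_{L^{\sigma_c}}^2=\frac{1}{16}\|V\|_{L^{\sigma_c}}^2M_0^2\le c\,M_0^2$. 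Inserting $R=A\sqrt{\tau_0}$ produces
\begin{align*}
\left|\textit{Im}\int\nabla\phi_R\cdot\nabla v_0\overline{v_0}\,dx\right|\le c\,\|\nabla v_0\|_{L^2}\,M_0\,A^{1+s_c}\,\tau_0^{\frac{1+s_c}{2}}.
\end{align*}
For the energy term: if $E[v_0]\le0$ it is nonpositive and may be dropped; if $E[v_0]>0$, hypothesis \eqref{hpprop1i} gives $E[v_0]<\tau_\ast^{-(1-s_c)}$, and since $\tau_0\le\tau_1\le\tau_\ast$ one has $2\tau_0(\sigma s_c+1)E[v_0]\le c\,\tau_0^{s_c}(\tau_0/\tau_\ast)^{1-s_c}\le c\,\tau_0^{s_c}$. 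Hence, in all cases, the left-hand side of \eqref{ImE} is at most $c\,\|\nabla v_0\|_{L^2}M_0A^{1+s_c}\tau_0^{(1+s_c)/2}+c\,\tau_0^{s_c}$.

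It then remains to check that both of these terms are dominated by $c\,M_0^2A^{2(1+s_c)}\varepsilon^{-1/(1+s_c)}\tau_0^{s_c}$ once $\varepsilon$ is small. Writing $\tau_0^{(1+s_c)/2}=\tau_0^{s_c}\tau_0^{(1-s_c)/2}\le\tau_0^{s_c}\tau_\ast^{(1-s_c)/2}$, the first term is controlled as soon as $\|\nabla v_0\|_{L^2}\tau_\ast^{(1-s_c)/2}\le c\,M_0A^{1+s_c}\varepsilon^{-1/(1+s_c)}$; using $A\ge A_\varepsilon$ and \eqref{Gep} one computes $M_0A_\varepsilon^{1+s_c}\varepsilon^{-1/(1+s_c)}=\varepsilon^{1/2-1/(1+s_c)}M_0^{1/(2\varepsilon)}$, which tends to $+\infty$ as $\varepsilon\to0^+$ because the exponent of $\varepsilon$ is negative (here $s_c<1$) and $M_0\ge2$; hence the inequality holds for all $\varepsilon$ small enough depending only on the fixed number $\|\nabla v_0\|_{L^2}\tau_\ast^{(1-s_c)/2}$. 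The term $c\,\tau_0^{s_c}$ is controlled because $M_0^2A^{2(1+s_c)}\varepsilon^{-1/(1+s_c)}\ge M_0^2A_\varepsilon^{2(1+s_c)}\varepsilon^{-1/(1+s_c)}=\varepsilon^{s_c/(1+s_c)}M_0^{1/\varepsilon}\to+\infty$ as $\varepsilon\to0^+$, which is a smallness requirement independent of $v_0$. Taking $\varepsilon_1\le\varepsilon_0$ to be the minimum of these two thresholds finishes the proof with a universal constant $c$ (for $\tau_0=0$ all terms vanish).

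The step I expect to be the main subtlety is the appearance of $\|\nabla v_0\|_{L^2}$ in the momentum estimate: the local theory provides no a priori bound for $\|\nabla v_0\|_{L^2}$ in terms of $M_0$, so this factor cannot be absorbed with a universal constant unless one exploits that the right-hand side of \eqref{ImE} carries the enormous factor $A_\varepsilon^{2(1+s_c)}\varepsilon^{-1/(1+s_c)}$, which diverges as $\varepsilon\to0$ precisely because $M_0\ge2$. This is exactly why $\varepsilon_1$ is allowed to depend on $v_0$ while the constant $c$ in \eqref{ImE} stays universal. Everything else is a routine application of Lemma \ref{lemaradialGN}(i) and of the elementary properties \eqref{phi}--\eqref{nablaphi} of the cutoff $\phi$.
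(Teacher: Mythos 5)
Your computation is internally consistent, but the approach has a genuine defect: by estimating the virial term directly at $\tau=0$ via Cauchy--Schwarz you are forced to pay a factor $\|\nabla v_0\|_{L^2}\,\tau_0^{(1-s_c)/2}$, and to absorb it you must shrink $\varepsilon_1$ until $\varepsilon^{1/2-1/(1+s_c)}M_0^{1/(2\varepsilon)}$ dominates $\|\nabla v_0\|_{L^2}\tau_\ast^{(1-s_c)/2}$. This makes $\varepsilon_1$ depend on $\tau_\ast$ (and on the size of $\nabla v_0$) in a way that $\varepsilon_0$ does not: $\varepsilon_0$ depends on $v_0$ only through $\|\nabla v_0\|_{L^2}$ and never on $\tau_\ast$. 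That difference is fatal for the way the lemma is used. In the proof of Proposition \ref{prop1} the exponents $\alpha_1,\alpha_2$ are essentially $1/\varepsilon$ and must be universal; in Theorem \ref{scteo1} the proposition is applied with $\tau_\ast\to\infty$ for fixed $u_0$, and with your threshold one would only get $G_\varepsilon\gtrsim\|\nabla u_0\|_{L^2}^2\tau_\ast^{1-s_c}$, so the dispersive bound no longer forces $\|\nabla u(\tau_n)\|_{L^2}\to 0$ along a sequence, and the contradiction with \eqref{radial2} disappears; in Theorem \ref{scteo2} the lemma is applied to the renormalized solutions of Lemma \ref{vteo2}, for which $\|\nabla v^{(t)}(0)\|_{L^2}=1$ but $\tau_\ast=2/\lambda_u(t)\to\infty$ as $t\to T^{\ast}$, so your $\varepsilon_1$ degenerates with $t$, $\alpha_2\sim 1/\varepsilon_1$ blows up like $N(t)$, and the comparison $L(t)<e^{\sqrt{N(t)}/2}$ versus the alternative bound no longer yields \eqref{N1} with a universal $\gamma$. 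So the ``subtlety'' you flagged is not harmless: the lemma is only useful if the smallness of $\varepsilon_1$ beyond $\varepsilon_0$ is universal (depending on $N,\sigma,b$ only), which your argument does not give.

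The missing idea is that the estimate must be obtained dynamically, not at $\tau=0$. The paper first uses the bootstrap dispersive bound \eqref{dispersioneps} (valid on $[0,\tau_1]$ by the definition of $\tau_1$) to produce an intermediate time $t_0\in\left[\frac{\varepsilon^{1/(1+s_c)}}{4}\tau_0,\frac{\varepsilon^{1/(1+s_c)}}{2}\tau_0\right]$ with $\|\nabla v(t_0)\|_{L^2}^2\leq K G_\varepsilon/t_0^{1-s_c}$; it then bounds $\int\phi_R|v(t_0)|^2\,dx$ by integrating \eqref{phi'} and using \eqref{phiR1} and \eqref{dispersioneps}, which via Cauchy--Schwarz controls $\textit{Im}\int\nabla\phi_R\cdot\nabla v(t_0)\overline{v(t_0)}\,dx$ by $cM_0^2A^{2(1+s_c)}\tau_0^{s_c}\varepsilon^{-1/(1+s_c)}$ with universal constants; finally it transports this bound back to $\tau=0$ by integrating the virial identity \eqref{virial} over $[0,t_0]$, using \eqref{Minftyep} to control the $\Delta^2\phi_R$ term, \eqref{intzeta0} for $\int_0^{t_0}\|\nabla v\|_{L^2}^2$, and the energy control \eqref{hpprop1i} for the $E[v_0]$ contribution. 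In this way $\|\nabla v_0\|_{L^2}$ and $\tau_\ast$ never enter the smallness conditions, which is precisely what the later arguments require; your proof, by contrast, proves only a weakened statement that cannot feed Propositions \ref{prop1}--\ref{prop2} and Theorems \ref{scteo1}--\ref{scteo2}.
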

\begin{proof}
We split the proof in several steps.\\
	\textbf{Step 1.}
Let $\tau_0\in[0,\tau_1]$ and $\varepsilon\leq \varepsilon_0$. We first claim that there exists a universal constant $K(N,\sigma,b)>0$ and a time $t_0$ such that
	\begin{align}\label{thetaK}
	t_0 \in \left[\frac{\varepsilon^{\frac{1}{1+s_c}}}{4}\tau_0,\frac{\varepsilon^{\frac{1}{1+s_c}}}{2}\tau_0\right]\quad\mbox{ and }\quad \|\nabla v(t_0)\|_{L^2}^2\leq\frac{KG_\varepsilon}{t_0^{1-s_c}}.
	\end{align}
	In fact, let $t=\varepsilon^{\frac{1}{1+s_c}}\tau_0\leq\tau_0\leq\tau_1$. If \eqref{thetaK} does not occur, then from \eqref{dispersioneps} we have
	\begin{align}
	G_{\varepsilon}t^{1+s_c}\geq \int^{t}_{0}(t-\tau)\|\nabla v(\tau)\|_{L^2}^2\,d\tau\geq\frac{t}{2}\int_{\frac{t}{4}}^{\frac{t}{2}}\|\nabla v(\tau)\|_{L^2}^2\,d\tau> K G_{\varepsilon}\frac{t}{2}\int_{\frac{t}{4}}^{\frac{t}{2}}\frac{d\tau}{\tau^{1-s_c}}\geq cKG_{\varepsilon}t^{1+s_c}
	\end{align}
	which yields a contradiction for $K>0$ large enough and thus \eqref{thetaK} is valid.\\
	\textbf{Step 2.} For $A\geq A_{\varepsilon}$, let $A_1>0$ so that $R=A\sqrt{\tau_0}=A_1\sqrt{t_0}$, and thus,
	\begin{align}\label{epsA1}
	\frac{\varepsilon^{\frac{1}{2(1+s_c)}}}{2}\leq \frac{A}{A_1}\leq \varepsilon^{\frac{1}{2(1+s_c)}}.
	\end{align}
	Next, we show the following inequality
	\begin{align}\label{intphi1}
	\frac{1}{R^{2(1+s_c)}}\int \phi_R|v(t_0)|^2\,dx\leq c\left[M_0^2+\frac{G_\varepsilon}{A_1^{2(1+s_c)}}\right].
	\end{align}
	Indeed, by the virial identity \eqref{phi'}, Cauchy-Schwarz inequality and the properties of $\phi$ \eqref{nablaphi} we obtain
	\begin{align}\label{phiRL2}
	\frac{d}{d\tau}\int \phi_R|v|^2\,dx\leq 2\left|\textit{Im}\left(\int\nabla\phi_R\cdot\nabla v\overline v\,dx\right)\right|\leq c\|\nabla v\|_{L^2}\left(\int \phi_R|v|^2\,dx\right)^{\frac{1}{2}}.
	\end{align}
and therefore
	\begin{align}
	\frac{d}{d\tau}\left[\left(\int \phi_R|v(\tau)|^2\,dx\right)^{\frac{1}{2}}\right]\leq c\|\nabla v(\tau)\|_{L^2}.
	\end{align}
	Integrating the above inequality from $0$ to $t_0$, square both sides, using inequality \eqref{phiR1}, the definition of $M_0$ \eqref{hpprop1ii} and Cauchy-Schwarz inequality
	\begin{align}
	\int\phi_R|v(t_0)|^2\,dx&\leq c \left[\int\phi_R|v_0|^2\,dx+\left(\int_0^{t_0}\|\nabla v(\tau)\|_{L^2}\,d\tau\right)^2\right]\\
	&
	\leq c\left[R^{2(1+s_c)}M_0^2+t_0\int_{0}^{t_0}\|\nabla v(\tau)\|_{L^2}^2\,d\tau\right].\label{intphi}
	\end{align}
	Since $2t_0\leq \varepsilon^{\frac{1}{1+s_c}}\tau_0\leq\tau_0\leq\tau_1$ and using estimate \eqref{dispersioneps}, we get
	\begin{align}\label{intzeta0}
	\int_0^{t_0}\|\nabla v(\tau)\|_{L^2}^2\,d\tau\leq \frac{1}{t_0}\int_{0}^{2t_0}(2t_0-\tau)\|\nabla v(\tau)\|_{L^2}^2\,d\tau\leq cG_\varepsilon t_0^{s_c}.
	\end{align}
	Thus, recalling that $R=A_1\sqrt{t_0}$, we combine \eqref{intzeta0} and \eqref{intphi} to obtain
	\begin{align}
	\int\phi_R|v(t_0)|^2\,dx\leq c[R^{2(1+s_c)}M_0^2+G_{\varepsilon}t_0^{1+s_c}]\leq  cR^{2(1+s_c)}\left[M_0^2+\frac{G_\varepsilon}{A_1^{2(1+s_c)}}\right]
	\end{align}
completing the proof of \eqref{intphi1}.\\
	\textbf{Step 3.} Now, we control the virial quantity \eqref{phi'} at time $t_0$. From the second inequality in \eqref{phiRL2}, we apply \eqref{thetaK} and \eqref{intphi1} to deduce for $R=A\sqrt{\tau_0}=A_1\sqrt{t_0}$ that
	\begin{align}
	\left|\textit{Im}\int\nabla\phi_R\cdot\nabla v(t_0)\overline{v(t_0)}\,dx\right|&\leq c\left(\frac{KG_\varepsilon}{t_0^{1-s_c}}\right)^{\frac{1}{2}}R^{1+s_c }\left[M_0^2+\frac{G_\varepsilon}{A_1^{2(1+s_c)}}\right]^{\frac{1}{2}}\\
	&= c\frac{G_\varepsilon^{\frac{1}{2}}}{\left(\tau_0\frac{A^2}{A_1^2}\right)^{\frac{1-s_c}{2}}}A^{1+s_c}\tau_0^{\frac{1+s_c}{2}}\left[M_0+\frac{G_\varepsilon^{\frac{1}{2}}}{A_1^{1+s_c}}\right]\\\nonumber
	&= cM_0^2A^{2(1+s_c)}\tau_0^{s_c}\left[\left(\frac{G_\varepsilon}{A^{2(1+s_c)}M_0^2}\right)^{\frac{1}{2}}\left(\frac{A_1}{A}\right)^{1-s_c}\!\!\!+\frac{G_\varepsilon}{A^{2(1+s_c)}M_0^2}\left(\frac{A}{A_1}\right)^{2s_c}\right].\\
\end{align}
So, for all $A\geq A_\varepsilon$, we deduce
\begin{align}
	\left|\textit{Im}\int\nabla\phi_R\cdot\nabla v(t_0)\right.&\left.\overline{v(t_0)}\,dx\right|\\
&\leq cM_0^2A^{2(1+s_c)}\tau_0^{s_c}\left[\left(\frac{G_\varepsilon}{A_\varepsilon^{2(1+s_c)}M_0^2}\right)^{\frac{1}{2}}\left(\frac{A_1}{A}\right)^{1-s_c}\!\!\!+\frac{G_\varepsilon}{A_\varepsilon^{2(1+s_c)}M_0^2}\left(\frac{A}{A_1}\right)^{2s_c}\right]\\&
	\leq c\frac{M_0^2A^{2(1+s_c)}}{\varepsilon^{\frac{1}{1+s_c}}}\tau_0^{s_c},\label{intImzeta}
	\end{align}
where in the last inequality we have used definition \eqref{Gep} and relation \eqref{epsA1}.\\
	\textbf{Step 4.} Finally, we prove an initial control on the virial quantity \eqref{phi'}. Recall that $A\geq A_{\varepsilon}$, $R=A\sqrt{\tau_0}$ and $2t_0\leq \varepsilon^{\frac{1}{1+s_c}}\tau_0$. From \eqref{Minfty2}, \eqref{Gep} and \eqref{Minftyep}, for all $\tau_0\in [0,\tau_1]$
	\begin{align}
	\left|\int\Delta^2\phi_R|v(\tau_0)|^2\,dx\right|&\leq \frac{c}{R^2}\int_{2R\leq|x|\leq 4R}|v(\tau_0)|^2\,dx\leq c\frac{M_\infty^2(A_\varepsilon,\tau_0)}{R^{2(1-s_c)}}\\&\leq c\frac{M_0^2}{\varepsilon}\frac{\varepsilon^{\frac{1-s_c}{1+s_c}}}{A_\varepsilon^{2(1-s_c)}t_0^{1-s_c}}
	\leq c\frac{M_0^2}{\varepsilon}\frac{M_0^{\frac{2(1-s_c)}{1+s_c}}}{G_\varepsilon^{\frac{1-s_c}{1+s_c}}}\frac{1}{t_0^{1-s_c}}\leq \frac{1}{t_0^{1-s_c}},\label{lapacian2}
	\end{align}
	for $\varepsilon>$ small enough. Choose $\varepsilon_1\leq \varepsilon_0$ such that \eqref{lapacian2} hold. Using the above estimate, the conservation of the energy and the virial estimate \eqref{virial}.
	\begin{align}
	\left|\frac{d}{d\tau}\textit{Im}\left(\int\nabla \phi_R\cdot\nabla v\overline v\,dx\right)\right|\leq c\left[\int|\nabla v|^2\,dx+|E[v_0]|+\frac{1}{t_0^{1-s_c}}\right].
	\end{align}
	Now, integrating from $0$ to $t_0$, we obtain
	\begin{align}
	\left|\textit{Im}\int\nabla \phi_R\cdot\nabla v_0\overline{v_0}\,dx\right|&\leq \left|\textit{Im}\int\nabla \phi_R\cdot\nabla v(t_0)\overline{v(t_0)}\,dx\right|\\&\quad\quad+c\left[\int_0^{t_0}\|\nabla v(\tau)\|_{L^2}^2\,d\tau+t_0|E[v_0]|+t_0^{s_c}\right].
	\end{align}
In view of inequalities \eqref{intzeta0} and \eqref{intImzeta} we deduce
	\begin{align}\label{Imzeta}
	\textit{Im}\int\nabla \phi_R\cdot\nabla v_0\overline{v_0}\,dx \leq& c\frac{M_0^2A^{2(1+s_c)}}{\varepsilon^{\frac{1}{1+s_c}}}\tau_0^{s_c}+ cG_\varepsilon t_0^{s_c}+ct_0|E[v_0]|.
	\end{align}
To estimate the second term in the right hand side of \eqref{Imzeta} we use  \eqref{Gep} and \eqref{thetaK} to obtain
	\begin{align}\label{Gepzeta}
	G_\varepsilon t_0^{s_c}\leq  \frac{A_\varepsilon^{2(1+s_c)} M_0^2}{\varepsilon}\frac{\varepsilon^{\frac{s_c}{1+s_c}}}{2^{s_c}}\tau_0^{s_c}\leq c\frac{M_0^2A^{2(1+s_c)}}{\varepsilon^{\frac{1}{1+s_c}}}\tau_0^{s_c}.
	\end{align}
On the other hand, to control the last term in the right hand side of \eqref{Imzeta}, we observe that
$$
2(\sigma s_c+1)E[v_0]+c\varepsilon^{\frac{1}{1+s_c}}|E[v_0]|\leq c \max\{E[v_0],0\},
$$
for $\varepsilon>0$ small enough. Therefore, using again \eqref{thetaK} and the assumption \eqref{hpprop1i} we have
\begin{align}\label{maxE}
	2\tau_0(\sigma s_c+1)E[v_0]+ct_0|E[v_0]|&\leq \left[2(\sigma s_c+1)E[v_0]+c\varepsilon^{\frac{1}{1+s_c}}|E[v_0]|\right]\tau_0\\\nonumber
	&\leq c\max\{E[v_0],0\}\tau_0^{1-s_c}\tau_0^{s_c}<c\tau_0^{s_c}.
	\end{align}
Finally, collecting \eqref{Imzeta}, \eqref{Gepzeta} and \eqref{maxE} we complete the proof of estimate \eqref{ImE}.
\end{proof}
\subsection{The proofs of Propositions \ref{prop1} and \ref{prop2}}\label{5.3.3}
\begin{proof}[Proof of Proposition \ref{prop1}.] We will show that the estimates \eqref{prop1ii} and \eqref{prop1i} are consequence of \eqref{lemi} and \eqref{lemii} obtained in Lemma \ref{lemma34}. Recall inequalities \eqref{dispersioneps} and \eqref{Minftyep} used in the definition of $\tau_1$ in \eqref{tau1}. We first claim that for all $\tau_0\in [0,\tau_1]$, we have 
	\begin{align}\label{intep1}
	\int_{0}^{\tau_0}(\tau_0-\tau)\|\nabla v(\tau)\|_{L^2}^{2}\,d\tau\leq \frac{G_{\varepsilon}}{2}\tau_0^{1+s_c}
	\end{align}
	and
	\begin{align}\label{intep2}
	M_{\infty}^2(A_{\varepsilon},\tau_0)=\max_{\tau\in [0,\tau_0]}\rho(v(\tau),A_{\varepsilon}\sqrt{\tau})< \frac{M^2_0}{\varepsilon}.
	\end{align}
	for small enough $\varepsilon>0$. Assuming that \eqref{intep1} and \eqref{intep2} hold, by the regularity of $v$, we obtain a contradiction with the definition of $\tau_1$. Therefore inequalities \eqref{dispersioneps} and \eqref{Minftyep} hold at the maximal time $\tau_{\ast}$, which clearly imply \eqref{prop1ii} and \eqref{prop1i}, in view of definitions \eqref{Gep}. 	
%

Thus, it remains to show \eqref{intep1} and \eqref{intep2}. First observe that from \eqref{Gep} and \eqref{Minftyep} we deduce
	\begin{align}\label{110}
	\frac{[M^2_{\infty}(A_{\varepsilon},\tau_0)]^{\frac{2+\sigma}{2-\sigma}}+M^2_{\infty}(A_{\varepsilon},\tau_0)}{A_{\varepsilon}^{2(1-s_c)}}
&\leq \frac{\left(\frac{2M_0^2}{\varepsilon}\right)^{\frac{2+\sigma}{2-\sigma}}+\frac{2M_0^2}{\varepsilon}}{\left(\frac{\varepsilon G_{\varepsilon}}{M_0^2}\right)^{\frac{1-s_c}{1+s_c}}}\leq c\frac{1}{M_0^{\frac{1}{\varepsilon}\cdot \frac{1-s_c}{1+s_c}}} \left(\frac{M_0^2}{\varepsilon}\right)^{\frac{2+\sigma}{2-\sigma}+\frac{1-s_c}{1+s_c}}\leq \frac{1}{10}, 
	\end{align}
for $\varepsilon>0$ small enough, where we have used the assumption \eqref{hpprop1ii}. Combining the last inequality with \eqref{lemi} and using again definitions \eqref{Gep} and assumption \eqref{hpprop1ii}, we have, for $R=A_{\varepsilon}\sqrt{\tau_0}$, that
	\begin{align}
	\int_{0}^{\tau_0}&(\tau_0-\tau)\|\nabla v(\tau)\|_{L^2}^2\,d\tau\\&\leq  c\tau_0^{1+s_c}\left[M_0^2A_{\varepsilon}^{2(1+s_c)}+\frac{[M^2_{\infty}(A_{\varepsilon},\tau_0)]^{\frac{2+\sigma}{2-\sigma}}+M^2_{\infty}(A_{\varepsilon},\tau_0)}{A_{\varepsilon}^{2(1-s_c)}}\right]\nonumber\\
	&+2c\tau_0\left[\textit{Im}\int \nabla\phi_R\cdot\nabla v_0\overline{v_0}\,dx+2\tau_0(\sigma s_c+1) E[v_0]\right]\\
	& \leq c\tau_0^{1+s_c}\left[\varepsilon G_{\varepsilon}+\frac{1}{10}\right]+2c\tau_0\left[\textit{Im}\int \nabla\phi_R\cdot\nabla v_0 \overline{ v_0}\,dx+2\tau_0(\sigma s_c+1) E[v_0]\right]\nonumber\\
	& \leq G_{\varepsilon}\tau_0^{1+s_c}\left\{c\varepsilon+\frac{c}{10G_{\varepsilon}}+\frac{2c}{G_{\varepsilon}\tau_0^{s_c}}\left[\textit{Im}\int \nabla\phi_R\cdot\nabla v_0\overline{v_0}\,dx+2\tau_0(\sigma s_c+1)E[v_0]\right]\right\}\nonumber\\
	&\leq G_{\varepsilon}\tau_0^{1+s_c}\left\{\frac{1}{10}+\frac{2c}{G_{\varepsilon}\tau_0^{s_c}}\left[\textit{Im}\int \nabla\phi_R\cdot\nabla v_0\overline{v_0}\,dx+2\tau_0(\sigma s_c+1)E[v_0]\right]\right\},\nonumber
	\end{align}
for $\varepsilon$ small enough. Now, from Lemma \ref{lemimint} with $A=A_\varepsilon$, for all $\tau_0\in [0,\tau_1]$ and since $R=A_{\varepsilon}\sqrt{\tau_0}$, we get
	\begin{align}
	\int_{0}^{\tau_0}(\tau_0-\tau)\|\nabla v(\tau)\|^2_{L^2}\,d\tau&\leq G_{\varepsilon}\tau_0^{1+s_c}\left[\frac{1}{10}+c\frac{M_0^2A_{\varepsilon}^{2(1+s_c)}}{G_{\varepsilon}\varepsilon^{\frac{1}{1+s_c}}}\right]\\
	&=G_{\varepsilon}\tau_0^{1+s_c}\left[\frac{1}{10}+c\varepsilon^{\frac{s_c}{1+s_c}}\right]\leq \frac{G_{\varepsilon}}{2}\tau_0^{1+s_c},
	\end{align}
and thus \eqref{intep1} is proved. 

To show \eqref{intep2}, let $A\geq A_{\varepsilon}$ and $R=A\sqrt{\tau_0}$. First, since $\rho$ is non-increasing in $R$, then for all $\tau_0\in [0,\tau_1]$ we obtain
	\begin{align}
	M^2_{\infty}(A,\tau_0)\leq M^2_{\infty}(A_{\varepsilon},\tau_0)\leq M^2_{\infty}(A_{\varepsilon},\tau_1)\leq \frac{2M_0^2}{\varepsilon},
	\end{align}
where in the last inequality we have used \eqref{Minftyep}.\\
Combining \eqref{lemii} with \eqref{hpprop1ii}, the last inequality and \eqref{ImE} we deduce
	\begin{align}
	\frac{1}{R^{2s_c}}\int_{R\leq |x|\leq 2R}|v(\tau_0)|^2\,dx&\leq 8cM^2_0+\frac{c}{A^4}\left[[M^2_{\infty}(A,\tau_0)]^{\frac{2+\sigma}{2-\sigma}}+M^2_{\infty}(A,\tau_0)\right]\\
	&\quad +\frac{4}{\tau_0^{s_c}A^{2(1+s_c)}}\left[\textit{Im}\int \nabla\phi_R\cdot\nabla v_0\overline{v_0}\,dx+2\tau_0(\sigma s_c+1) E[v_0]\right]\nonumber\\
	&=8cM_0^2+c\frac{\left(\frac{2M_0^2}{\varepsilon}\right)^{\frac{2+\sigma}{2-\sigma}}+\frac{2M_0^2}{\varepsilon}}{\left(\frac{\varepsilon G_{\varepsilon}}{M_0^2}\right)^{\frac{2}{1+s_c}}}+\frac{4cM_0^2}{\varepsilon^{\frac{1}{1+s_c}}}\\
	&=M_0^2\left[8c+c\frac{1}{M_0^{\frac{1}{\varepsilon}\cdot \frac{2}{1+s_c}}} \left(\frac{M_0^2}{\varepsilon}\right)^{\frac{2+\sigma}{2-\sigma}+\frac{2}{1+s_c}}+\frac{4c}{\varepsilon^{\frac{1}{1+s_c}}}\right]< \frac{M_0^2}{\varepsilon},
	\end{align}
	for $\varepsilon>0$ small enough. The last inequality implies \eqref{intep2} and completes the proof of Proposition \ref{prop1}.
\end{proof}

\begin{proof}[Proof of Proposition \ref{prop2}]
	We first define the following renormalization of $v(\tau_0)$
	\begin{align}
	w(x)=\lambda_v^{\frac{2-b}{2\sigma}}(\tau_0)v(\lambda_v(\tau_0)x,\tau_0),
	\end{align}
where $\lambda_v(\tau_0)=\|\nabla v(\tau_0)\|^{-\frac{1}{1-s_c}}_{L^2}$. Therefore
	\begin{align}\label{wgrad}
	\|\nabla w\|_{L^2}^2=\lambda_v(\tau_0)^{2(1-s_c)}\|\nabla v(\tau_0)\|_{L^2}^2=1.
	\end{align}
Moreover, the conservation of the energy \eqref{Energy} and assumption \eqref{Ev} yield
	\begin{align}
	E[w]&=\frac{1}{2}\int|\nabla w|^2\,dx-\frac{1}{2\sigma+2}\int |x|^{-b}|w|^{2\sigma+2}\,dx\\&=\lambda_v^{2(1-s_c)}(\tau_0)E[v(\tau_0)]=\frac{E[v(\tau_0)]}{\|\nabla v(\tau_0)\|_{L^2}^{2}}\leq \frac{1}{4}.
	\end{align}
	Thus,
	\begin{align}\label{Epg}
	\int |x|^{-b}|w|^{2\sigma+2}\,dx=(2\sigma+2)\left(\frac{1}{2}\int |\nabla w|^2\,dx-E[w]\right)\geq (2\sigma+2)\left(\frac{1}{2}-\frac{1}{4}\right)\geq \frac{\sigma+1}{2}.
	\end{align}
Next recalling the definition of $F_{\ast}$ in \eqref{F}, we set
	\begin{align}\label{Aep}
	J_{*}=C_{*}\max[M_0^{\alpha_1}F_*,M_0^{\frac{2+\sigma}{(1-s_c)(2-\sigma)}}],
	\end{align}
	for $C_*>1$ large enough to be chosen later. 
	
	From the definition of the semi-norm $\rho$ \eqref{defrho}, we get
	\begin{align}
	\rho(w,J_*)&=\sup_{R\geq J_*} \frac{1}{R^{2s_c}}\int_{R\leq |x|\leq 2R}|w|^2\,dx\leq \sup_{R\geq \frac{J_*}{C_*}}\frac{1}{R^{2s_c}}\int_{R\leq |x|\leq2R}|w|^2\,dx\\
	&\leq \sup_{R\geq M_0^{\alpha_1}F_*}\frac{1}{R^{2s_c}}\int_{R\leq |x|\leq 2R} |w|^2\,dx=\sup_{R\geq M_0^{\alpha_1}F_*}\frac{1}{R^{2s_c}}\int_{R\leq |x|\leq 2R}\lambda_v(\tau_0)^{\frac{2-b}{\sigma}}|v(\lambda_v(\tau_0)x,\tau_0)|^2\,dx\nonumber\\
	&=\sup_{R\geq M_0^{\alpha_1}F_*}\frac{1}{(\lambda_v(\tau_0)R)^{2s_c}}\int_{\lambda_v(\tau_0)R\leq |x|\leq 2\lambda_v(\tau_0)R}|v(\tau_0)|^2\,dx\\
	&=\sup_{R\geq \lambda_v(\tau_0)M_0^{\alpha_1}F_*}\frac{1}{R^{2s_c}}\int_{R\leq |x|\leq 2R}|v(x,\tau_0)|^2\,dx\\
	&= \rho(v(\tau_0),M_0^{\alpha_1}\sqrt{\tau_0})\leq c M_0^2,
	\end{align}
where, in the last two steps, we have used the definition of $F_{\ast}$ \eqref{F} and the estimate \eqref{prop1ii} obtained in Proposition \ref{prop1}.

	Thus, for all $\eta>0$ the estimate \eqref{GNradial} implies
	\begin{align}
	\int_{|x|\geq J_*}|x|^{-b}|w|^{2\sigma+2}\,dx&\leq \eta \|\nabla w\|_{L^2}^2+\frac{C_\eta}{J_*^{2(1-s_c)}}\left[[\rho(w,J_*)]^{\frac{2+\sigma}{2-\sigma}}+[\rho(w,J_*)]^{\sigma+1}\right]\\
	&\leq \eta+\frac{C_\eta}{J_*^{2(1-s_c)}}M_0^{\frac{2(2+\sigma)}{2-\sigma}}.
	\end{align}
	Choosing $\eta>0$ small enough and $C_*>0$ large enough (see the definition of $J_{\ast}$ in \eqref{Aep}), from \eqref{Epg} we obtain
	\begin{align}\label{sigma+14}
	\int_{|x|\leq J_*}|x|^{-b}|w|^{2\sigma+2}\,dx\geq\frac{\sigma+1}{4}.
	\end{align}
	
	Considering $\varphi\in C^{\infty}_0(\Real^N)$ a cut-off function such that
	\begin{align}\label{fi}
	\varphi(x)=\left\{
	\begin{array}{ll}
	1,&|x|\leq 1\\
	0,&|x|\geq 2
	\end{array}
	\right.
	\,\,\,\mbox{ and }\,\,\, \varphi_{A}(x)=\varphi\left(\frac{x}{{A}}\right), 
	\end{align}
	we have
	\begin{align}
	\|\nabla (\varphi_{J_*} w)\|_{L^2}&=\|\nabla \varphi_{J_*} w+\varphi_{J_*}\nabla w\|_{L^2}\leq \|\nabla \varphi_{J_*} w\|_{L^2}+\|\varphi_{J_*}\nabla w\|_{L^2}\\&\leq c(\|w\|_{L^2(|x|\leq 2J_*)}+1),
	\end{align}
where we have used \eqref{wgrad} in the last inequality. Applying the Gagliardo-Nirenberg type inequality 
$$
\int|x|^{-b}|f|^{2\sigma+2}\,dx\nonumber\leq c\|\nabla f\|_{L^2}^{2\sigma s_c+2}\|f\|_{L^2}^{2\sigma(1-s_c)}
$$
(see \cite[Theorem 1.2]{Farah}) in the left hand side of \eqref{sigma+14}, we have
	\begin{align}
	\frac{\sigma+1}{4}&\leq \int_{|x|\leq J_*} |x|^{-b}|w|^{2\sigma+2}\,dx\leq \int|x|^{-b}|\varphi_{J_*} w|^{2\sigma+2}\,dx\nonumber\\&\leq c\|\nabla (\varphi_{J_*} w)\|_{L^2}^{2\sigma s_c+2}\|\varphi_{J_*} w\|_{L^2}^{2\sigma(1-s_c)}\leq c\left(\|w\|_{L^2(|x|\leq 2J_*)}^{2\sigma+2}+\|w\|_{L^2(|x|\leq 2J_*)}^{2\sigma(1-s_c)}\right).
	\end{align}
	Therefore, there exists a constant $c_3>0$ such that
	\begin{align}\label{364}
	0<2c_3\leq \int_{|y|\leq 2 J_*}|w|^2\,dx=\frac{1}{\lambda_v^{2s_c}(\tau_0)}\int_{|x|\leq 2J_*\lambda_v(\tau_0)}|v(\tau_0)|^2\,dx.
	\end{align}
	To complete the proof of Proposition \ref{prop2}, we need to show that the above estimate is also verified at $0$. For this purpose, we show that for all $\varepsilon>0$ there exists $\tilde{R}>0$ such that the following inequality holds
	\begin{align}\label{374}
	\frac{1}{\lambda_v^{2s_c}(\tau_0)}\left|\int\varphi_{\tilde{R}}|v(\tau_0)|^2\,dx-\int\varphi_{\tilde{R}}|v_0|^2\,dx\right|<\varepsilon,
	\end{align}
with $\varphi_{\tilde{R}}$ as in \eqref{fi}. More precisely, we show that for all $\varepsilon>0$ there exists $C_{\varepsilon}>0$ such that if
	\begin{align}\label{Dep}
	D_\varepsilon=C_\varepsilon \max\left[F_*,F_*^{\frac{1+s_c}{1-s_c}}\right]\max\left[M_0^{\alpha_1},M_0^{\frac{2+\alpha_2}{2(1-s_c)}}\right],
	\end{align}
and $D\geq D_\varepsilon$, then the inequality \eqref{374} holds for
	\begin{align}\label{Rtil}
	\widetilde{R}=\widetilde{R}(D,\tau_0)=D\lambda_v(\tau_0).
	\end{align}

	Assuming that \eqref{374} is true, from \eqref{364} and since $\|\varphi_R\|_{L^{\infty}}=1$, we have
	\begin{align}
	\frac{1}{\lambda_v^{2s_c}(\tau_0)}\int_{|x|\leq 2\tilde R} |v_0|^2\,dx> \frac{1}{\lambda_v^{2s_c}(\tau_0)}\int\varphi_{\tilde{R}}|v(\tau_0)|^2\,dx-\varepsilon\geq c_3>0,
	\end{align}
for $\varepsilon>0$ small enough. So, choosing $\alpha_3>0$ such that 
$$\frac{M_0^{\alpha_3}}{2}\max[1,F_*^{\frac{1+s_c}{1-s_c}}]\geq D_{\varepsilon},
$$
we deduce the desired inequality (\ref{tprop2}). Thus, it only remains to show \eqref{374}. Indeed, given $\varepsilon>0$, from \eqref{F}, \eqref{Dep} and \eqref{Rtil}, we get for $C_{\varepsilon}>1$
	\begin{align}
\tilde{R}=D\lambda_v(\tau_0)\geq D_\varepsilon\frac{\lambda_v(\tau_0)}{\sqrt{\tau_0}}\sqrt{\tau_0}\geq \frac{D_\varepsilon}{F_*}\sqrt{\tau}\geq M_0^{\alpha_1}\sqrt{\tau}, \,\,\,\mbox{for all}\,\,\, \tau\in [0,\tau_0],
	\end{align}
	and thus, using \eqref{prop1ii} and the monotonicity of $\rho$ we have
	\begin{align}
\rho(v(\tau),\tilde{R})\leq \rho(v(\tau),M_0^{\alpha_1}\sqrt{\tau})\leq cM_0^2, \,\,\,\mbox{for all}\,\,\, \tau\in [0,\tau_0].
	\end{align}
Now, using H\"older's inequality
	\begin{align}
	\left|\frac{d}{d\tau}\int \varphi_{\tilde{R}}|v|^2\,dx\right|=&2\left|\textit{Im}\left(\int \nabla \varphi_{\tilde{R}}\cdot \nabla v\overline{v}\,dx\right)\right|\\
	&\leq \frac{c}{\tilde{R}}\|\nabla v(\tau)\|_{L^2}\left(\int_{\tilde{R}\leq |x|\leq 2\tilde{R}}|v(\tau)|^2\,dx\right)^{\frac{1}{2}}\\
	&\leq c\frac{M_0}{{\tilde{R}}^{1-s_c}}\|\nabla v(\tau)\|_{L^2}.
	\end{align}
	Next, we integrate from $0$ to $\tau_0\in[0,\frac{\tau_*}{2}]$ and use \eqref{prop1i} to obtain
	\begin{align}
	\frac{1}{\lambda_v^{2s_c}(\tau_0)}\left|\int \varphi_{\tilde{R}}|v(\tau_0)|^2\,dx-\int \varphi_{\tilde{R}}|v_0|^2\,dx
	\right|
	&\leq c\frac{D^{2s_c}}{\tilde{R}^{2s_c}}\frac{M_0}{\tilde{R}^{1-s_c}}\int_0^{\tau_0}\|\nabla v(\tau)\|_{L^2}\,d\tau\\
	&\leq c\frac{D^{2s_c}M_0}{\tilde{R}^{1+s_c}}\left(\int_0^{\tau_0}\tau_0\|\nabla v(\tau)\|_{L^2}^2\,d\tau\right)^{\frac{1}{2}}\\
	&\leq c\frac{D^{2s_c}M_0}{\tilde{R}^{1+s_c}}\left(\int_0^{\tau_0}(2\tau_0-\tau)\|\nabla v(\tau)\|_{L^2}^2\,d\tau\right)^{\frac{1}{2}}\\
	&\leq c\frac{D^{2s_c}M_0}{\tilde{R}^{1+s_c}}\left(\int_0^{2\tau_0}(2\tau_0-\tau)\|\nabla v(\tau)\|_{L^2}^2\,d\tau\right)^{\frac{1}{2}}\\
	&\leq cM_0^{\frac{2+\alpha_2}{2}}D^{2s_c}\left(\frac{\tau_0}{\tilde{R}^2}\right)^{\frac{1+s_c}{2}}\\
	& =cM_0^{\frac{2+\alpha_2}{2}}\frac{F_*^{1+s_c}}{D^{1-s_c}},
	\end{align}
where we have used definitions \eqref{F} and \eqref{Rtil} in the last step.

Finally, since $D\geq D_{\varepsilon}\geq C_\varepsilon F_*^{\frac{1+s_c}{1-s_c}}M_0^{\frac{2+\alpha_2}{2(1-s_c)}}$ by \eqref{Dep}, taking $C_\varepsilon>1$ large enough we conclude \eqref{374} and complete the proof of Proposition \ref{prop2}.
\end{proof}
\section{Blow-up of radial solutions with non-negative energy}\label{sec5}
In this section, we prove Theorem \ref{scteo1}, which follows as a direct consequence of Propositions \ref{prop1} and \ref{prop2}.
\begin{proof}[Proof of Theorem \ref{scteo1}] Assume by contradiction that there exists $u\in C([0,+\infty):\dot H^{s_c}\cap \dot H^1 )$ radially symmetric global solution to \eqref{PVI} with non-positive energy. From the Gagliardo-Nirenberg type inequality \eqref{GNine} we deduce that
	\begin{align}
	E[u_0]\geq \frac{1}{2}\|\nabla u_0\|_{L^2}^2\left[1-\left(\frac{\|u_0\|_{L^{\sigma_c}}}{\|V\|_{L^{\sigma_c}}}\right)^{2\sigma}\right].
	\end{align}
	Since $E[u_0]\leq 0$, the conditions \eqref{hpprop1i} and \eqref{hpprop1ii} are satisfied. Thus, for all $\tau_*>0$, we apply Proposition \ref{prop1} and inequality \eqref{prop1i} to deduce
	\begin{align}
	\int_{0}^{\tau_*}(\tau_*-\tau)\|\nabla u(\tau)\|_{L^2}^{2}\,d\tau\leq C_{u_0}\tau_*^{1+s_c}.
	\end{align}
	In particular, we obtain the existence of a sequence $\{\tau_n\}_{n=1}^{+\infty}$ such that $\tau_n\to +\infty$, as $n\to+\infty$, and
	\begin{align}
	\|\nabla u(\tau_n)\|_{L^2}\leq \frac{C_{u_0}}{\tau_n^{\frac{1-s_c}{2}}},\,\,\,\mbox{for all}\,\,\, n\in \mathbb{N}.
	\end{align}
Therefore for $\lambda_u(\tau)=\|\nabla u(\tau)\|^{-\frac{1}{1-s_c}}_{L^2}$ we have
	\begin{align}\label{lambdatau}
	\lambda_u(\tau_n)=\left(\frac{1}{\|\nabla u(\tau_n)\|_{L^2}}\right)^{\frac{1}{1-s_c}}\geq C_{u_0}\sqrt{\tau_n}.
	\end{align}
	Again from the conservation of the energy, we have that the solution $u(\tau)\in \dot H^{s_c}\cap \dot H^1 $ satisfies conditions \eqref{hpprop1i}, \eqref{hpprop1ii} and  \eqref{Ev} with $\tau_0=\tau_n$. Now, applying Proposition \ref{prop2}, inequality \eqref{tprop2}, at time $\tau_n$ and denoting $F_*=F_n = \frac{\sqrt{\tau_n}}{\lambda_u(\tau_n)}$. From \eqref{lambdatau}, $F_n \leq C_{u_0}$ for all $n\in \mathbb{N}$ and hence there exists $D_*$, independent of $n$, such that 
	\begin{align}\label{contrteo1}
	\frac{1}{\lambda_u^{2s_c}(\tau_n)}\int_{|x|\leq D_*\lambda_u(\tau_n)}|u_0|^2\,dx\geq C_2>0,
	\end{align}
in view of inequality \eqref{tprop2}.
	
	On the other hand, in view of \eqref{lambdatau}, $\lambda_u(\tau_n)\to+\infty$, as $n\to+\infty$. Moreover, since $u_0\in \dot H^{s_c} \subset L^{\sigma_c}$, we apply Lemma \ref{lemaradialGN}, more precisely \eqref{radial2} with $R=D_*\lambda_u(\tau_n)$, to obtain 
	\begin{align}
	\lim_{n\to +\infty}\frac{1}{(D_*\lambda_u(\tau_n))^{2s_c}}\int_{|x|\leq D_*\lambda_u(\tau_n)}|u_0|^2\,dx=0,
	\end{align}
which is a contradiction with \eqref{contrteo1}. Therefore, the maximal time of existence $T^{\ast}$ of the solution $u(t)$ is finite and the proof of Theorem \ref{scteo1} is complete.
\end{proof}
\section{A lower bound for the blow-up rate}\label{sec6}
Here, we show that Theorem \ref{scteo2} is also a consequence of Propositions \ref{prop1} and \ref{prop2}. We start with a preliminary and simple result.
\begin{lemma}\label{vteo2}
	Let $N\geq 3$, $0<b<\min\left\{\frac{N}{2},2\right\}$ and $\frac{2-b}{N}<\sigma<\frac{2-b}{N-2}$. 
	Consider $u_0\in\dot H^{s_c}\cap\dot H^{1} $ with radial symmetry and assume that the maximal time of existence $T^{\ast}>0$ of the corresponding solution $u(t)$ to \eqref{PVI} is finite. Also assume that
\begin{align}\label{esttl10}
\lim_{t\to T^{\ast}}\left\|\nabla u(t)\right\|_{L^2}=\infty.
\end{align}
For $t$ close enough to $T^{\ast}$,  consider the following renormalization of $u$
	\begin{align}\label{v}
	v^{(t)}(x,\tau)=\lambda_u^{\frac{2-b}{2\sigma}}(t)\bar{u}(\lambda_u(t)x,t-\lambda^2_u(t)\tau),
	\end{align}
	where $\lambda_u$ is given by
	\begin{align}\label{lambu}
	\lambda_u(t)=\left(\frac{1}{\|\nabla u(t)\|_{L^2}}\right)^{\frac{1}{1-s_c}}.
	\end{align}
	Then, $v^{(t)}(\tau)\in C([0,\tau^{(t)}]:\dot H^{s_c}\cap \dot H^1)$ is a solution to \eqref{PVI} with $\tau^{(t)}=\frac{t}{\lambda_u^2(t)}$ and $v^{(t)}(\tau_0)$ satisfies the conditions of Propositions \ref{prop1} and \ref{prop2} for any $\tau_0$ such that
	\begin{align}
	\tau_0\in\left[0,\frac{1}{\lambda_u(t)}\right].
	\end{align}
\end{lemma}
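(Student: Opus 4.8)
The plan is to verify the two assertions in turn, using throughout the symmetries of \eqref{PVI}: if $u$ solves \eqref{PVI}, then for any fixed $T$ the function $(x,s)\mapsto\bar u(x,T-s)$ also solves \eqref{PVI} (time reversal composed with complex conjugation, obtained by conjugating the equation), and whenever $w$ solves \eqref{PVI} so does $(x,s)\mapsto\mu^{\frac{2-b}{2\sigma}}w(\mu x,\mu^2 s)$ for every $\mu>0$ (the scaling symmetry). Composing these with $T=t$ and $\mu=\lambda_u(t)$ shows that $v^{(t)}$ from \eqref{v} solves \eqref{PVI}, and it is radial and lies in $\dot H^{s_c}\cap\dot H^1$ at each time because $u(t)$ is. For the interval of definition: as $\tau$ runs over $[0,\tau^{(t)}]$ with $\tau^{(t)}=t/\lambda_u^2(t)$, the argument $t-\lambda_u^2(t)\tau$ runs over the compact interval $[0,t]\subset[0,T^{\ast})$, on which $u\in C([0,t]:\dot H^{s_c}\cap\dot H^1)$; since the scaling leaves $\dot H^{s_c}$ invariant and maps $\dot H^1$ continuously into itself while conjugation preserves both norms, $v^{(t)}\in C([0,\tau^{(t)}]:\dot H^{s_c}\cap\dot H^1)$. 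This gives the first assertion.

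For the second assertion I would fix $t$ close enough to $T^{\ast}$, take $\tau_*=2/\lambda_u(t)$, and invoke Proposition \ref{prop1} for $v^{(t)}$ on $[0,\tau_*]$ together with Proposition \ref{prop2} at the prescribed $\tau_0\in[0,1/\lambda_u(t)]=[0,\tau_*/2]$. Since $\lambda_u(t)=\|\nabla u(t)\|_{L^2}^{-1/(1-s_c)}\to0$ by \eqref{esttl10} while $t\to T^{\ast}>0$, one has $\tau_*\le\tau^{(t)}$ for $t$ near $T^{\ast}$, so the restriction is legitimate. Using that $L^{\sigma_c}$ is scaling invariant, that the kinetic and potential parts of the energy both carry the scaling exponent $2(1-s_c)$ (by the choice of $s_c$), and the conservation of energy for $u$, one computes
\begin{align*}
\|v^{(t)}(0)\|_{L^{\sigma_c}}=\|u(t)\|_{L^{\sigma_c}},\qquad
E[v^{(t)}(0)]=\lambda_u^{2(1-s_c)}(t)E[u_0]=\frac{E[u_0]}{\|\nabla u(t)\|_{L^2}^2},
\end{align*}
and more generally $\|\nabla v^{(t)}(\tau_0)\|_{L^2}^2=\lambda_u^{2(1-s_c)}(t)\,\|\nabla u(t-\lambda_u^2(t)\tau_0)\|_{L^2}^2=\|\nabla u(t-\lambda_u^2(t)\tau_0)\|_{L^2}^2/\|\nabla u(t)\|_{L^2}^2$, so in particular $\|\nabla v^{(t)}(0)\|_{L^2}=1$.

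With these identities the hypotheses become routine to check. Condition \eqref{hpprop1i} reads $2^{1-s_c}\max\{E[u_0],0\}/\|\nabla u(t)\|_{L^2}<1$, true for $t$ near $T^{\ast}$ by \eqref{esttl10}. Condition \eqref{hpprop1ii} amounts to $\|u(t)\|_{L^{\sigma_c}}\ge\|V\|_{L^{\sigma_c}}/2$, which I would get by substituting the energy identity $\int|x|^{-b}|u(t)|^{2\sigma+2}\,dx=(\sigma+1)\|\nabla u(t)\|_{L^2}^2-(2\sigma+2)E[u_0]$ into the Gagliardo--Nirenberg inequality \eqref{GNine} and dividing by $(\sigma+1)\|\nabla u(t)\|_{L^2}^2$, which gives
\begin{align*}
1-\frac{2E[u_0]}{\|\nabla u(t)\|_{L^2}^2}\le\left(\frac{\|u(t)\|_{L^{\sigma_c}}}{\|V\|_{L^{\sigma_c}}}\right)^{2\sigma},
\end{align*}
whose left-hand side tends to $1$ as $t\to T^{\ast}$. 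Thus Proposition \ref{prop1} applies and furnishes \eqref{prop1ii}--\eqref{prop1i} for $v^{(t)}$. For Proposition \ref{prop2} it then remains only to observe that \eqref{hpprop2i} is exactly $\tau_0\le\tau_*/2$, that \eqref{prop1ii}--\eqref{prop1i} are now in hand, and that \eqref{Ev}, after cancelling the common factor $\|\nabla u(t)\|_{L^2}^{-2}$, becomes $E[u_0]\le\frac{1}{4}\|\nabla u(t-\lambda_u^2(t)\tau_0)\|_{L^2}^2$; since $\tau_0\le1/\lambda_u(t)$ forces $t-\lambda_u^2(t)\tau_0\in[t-\lambda_u(t),t]$ with $t-\lambda_u(t)\to T^{\ast}$, this interval is, for $t$ near $T^{\ast}$, contained in a region where $\|\nabla u\|_{L^2}^2\ge4E[u_0]$ by \eqref{esttl10}, so the estimate holds uniformly in $\tau_0$.

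The only step calling for more than bookkeeping with scaling exponents is the non-degeneracy of the $L^{\sigma_c}$ norm at the blow-up time, namely the bound $\|u(t)\|_{L^{\sigma_c}}\ge\|V\|_{L^{\sigma_c}}/2$ for $t$ near $T^{\ast}$; this is precisely where the Gagliardo--Nirenberg inequality \eqref{GNine} combined with $\|\nabla u(t)\|_{L^2}\to\infty$ is needed. All remaining verifications follow from the scaling laws for $E$, $\|\nabla\cdot\|_{L^2}$ and $\|\cdot\|_{L^{\sigma_c}}$ and from the continuity of $t\mapsto u(t)$ in $\dot H^{s_c}\cap\dot H^1$.
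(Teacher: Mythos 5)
Your proposal is correct and follows essentially the same route as the paper: set $\tau_*=2/\lambda_u(t)$, use the scaling identities $\|\nabla v^{(t)}(0)\|_{L^2}=1$, $E[v^{(t)}(0)]=\lambda_u^{2(1-s_c)}(t)E[u_0]$ and $\|v^{(t)}(0)\|_{L^{\sigma_c}}=\|u(t)\|_{L^{\sigma_c}}$ together with \eqref{esttl10} to verify \eqref{hpprop1i}, \eqref{hpprop2i} and \eqref{Ev}, and the Gagliardo--Nirenberg inequality \eqref{GNine} to get \eqref{hpprop1ii}. Your treatment is if anything slightly more explicit than the paper's (spelling out the time-reversal-plus-conjugation symmetry and the uniformity in $\tau_0$ for \eqref{Ev}), but it is the same argument.
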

\begin{proof}
	For $t>0$ fixed, to simplify the notation we write $v=v^{(t)}$. From scaling symmetry $u(x,t)\mapsto \lambda^{\frac{2-b}{2\sigma}}u(\lambda x, \lambda^2 t)$ it is easy to see that $v(\tau)$ is a solution to \eqref{PVI} on the time interval $[0, \frac{t}{\lambda_u^2(t)}]$. Moreover, from the assumption \eqref{esttl10} we get 
	$$
	\frac{t}{\lambda_u^2(t)}\geq \frac{2}{\lambda_u(t)},
	$$
for $t$ close enough to $T^{\ast}$. Set $\tau_\ast=\frac{2}{\lambda_u(t)}$, then $v(\tau)$ is a solution to \eqref{PVI} on the time interval $[0,\tau_\ast]$. Now, we need to check that \eqref{hpprop1i}, \eqref{hpprop1ii}, \eqref{hpprop2i} and \eqref{Ev}  are satisfied. First note that if $\tau_0\in \left[0,\frac{1}{\lambda_u(t)}\right]=\left[0,\frac{\tau_*}{2}\right]$, then \eqref{hpprop2i} holds. Also, it is clear that
	\begin{align}\label{vt}
	\|\nabla v(0)\|_{L^2}=1\,\,\,\,\mbox{ and }\,\,\,\,\,E[v(0)]=\lambda_u(t)^{2(1-s_c)}E[u_0].
	\end{align}
	In particular,
	\begin{align}
	\tau_*^{1-s_c}|E[v(0)]|=\left(\tau_*\lambda_u^2(t)\right)^{1-s_c}|E[u_0]|=(2\lambda_u(t))^{1-s_c}|E[u_0]|\to 0,\,\,\,\mbox{ as }\,\,\,t\to T^{\ast}
	\end{align}
	and \eqref{hpprop1i} follows  for $t$ close enough to $T^{\ast}$. In addition, from the Gagliardo-Nirenberg type inequality \eqref{GNine} and \eqref{vt},
	\begin{align}
	E[v(0)]\geq\frac{1}{2}\left(1-\left(\frac{\|v(0)\|_{L^{\sigma_c}}}{\|V\|_{L^{\sigma_c}}}\right)^{2\sigma}\right),
	\end{align}
which implies \eqref{hpprop1ii}, for $t$ close enough to $T^{\ast}$, since $E[v(0)]\to 0$, as $t\to T^{\ast}$. Next, for all $\tau_0\in \left[0,\frac{1}{\lambda_u(t)}\right]$, we have
	\begin{align}
	\lambda_v(\tau_0)=\left(\frac{1}{\|\nabla v(\tau_0)\|_{L^2}}\right)^{\frac{1}{1-s_c}}=\left(\frac{1}{\lambda_u(t)^{1-s_c}\|\nabla u(t-\lambda_u^2(t)\tau_0)\|_{L^2}}\right)^{\frac{1}{1-s_c}}=\frac{\lambda_u(t-\lambda_u^2(t)\tau_0)}{\lambda_u(t)},
	\end{align}
and
$$
t-\lambda^2_u(t)\tau_0\geq t-\lambda_u(t) \to T^{\ast}, \,\,\,\mbox{ as }\,\,\,t\to T^{\ast}.
$$
Combining these last two relation with \eqref{vt}, we deduce
	\begin{align}
	\lambda_v^{2(1-s_c)}(\tau_0)|E[v(0)]|=&(\lambda_v(\tau_0)\lambda_u(t))^{2(1-s_c)}|E[u_0]|\\
	=&(\lambda_u(t-\lambda_u^2(t)\tau_0))^{2(1-s_c)}|E[u_0]|\to 0, \,\,\,\mbox{ as }\,\,\,t\to T^{\ast},
	\end{align}
and then \eqref{Ev} holds for $t$ close enough to $T^{\ast}$.
\end{proof}

Now, we are ready to complete the proof of Theorem \ref{scteo2}.

\begin{proof}[Proof of Theorem \ref{scteo2}]
We consider $u_0\in \dot H^{s_c}\cap\dot H^1 $ radially symmetric and $u(t)\in \dot H^{s_c}\cap\dot H^1 $ the corresponding solution to \eqref{PVI} with finite maximal time of existence $T^{\ast}>0$. 
Assume that
	\begin{align}\label{lbgradu}
	\|\nabla u(t)\|_{L^2}\geq \frac{c}{(T^{\ast}-t)^{\frac{1-s_c}{2}}}.
	\end{align}

Define
	\begin{align}\label{N}
	N(t)=-\log \lambda_u(t)\,\,\, \mbox{or equivalently}\,\,\, e^{N(t)}=\frac{1}{\lambda_u(t)}.
	\end{align}
and let $v=v^{(t)}$ be as in Lemma \ref{vteo2}. So, we can apply Propositions \ref{prop1} and \ref{prop2} to $v(\tau_0)$ with $\tau_0\in[0,e^{N(t)}]$. Moreover, from the definition of $v$ in \eqref{v}, it is easy to see that
	\begin{align}
	\|u(t)\|_{L^{\sigma_c}}=&\lambda_u^{-\frac{2-b}{2\sigma}}(t)\left(\int |v(\lambda_u(t)^{-1}x,0)|^{\sigma_c}\,dx\right)^{\frac{1}{\sigma_c}}=\lambda_u^{-\frac{2-b}{2\sigma}+\frac{N}{\sigma_c}}\|v(0)\|_{L^{\sigma_c}}=\|v(0)\|_{L^{\sigma_c}}.
	\end{align}
Also, from \eqref{lambu}, \eqref{lbgradu} and \eqref{N} we have
$$
N(t)\geq c|\log (T^{\ast}-t)^{1/2}|=C|\log (T^{\ast}-t)|.
$$
Therefore, to deduce the desired lower bound \eqref{rate}, we reduce our problem to show
\begin{align}\label{N1}
	\|v(0)\|_{L^{\sigma_c}}\geq [N(t)]^{\gamma},
	\end{align}
for some universal constant $\gamma=\gamma(N,\sigma, b)>0$ and for $t$ close enough to $T^{\ast}$. 
	
	To this end, for the constant $\alpha_2$ given in \eqref{prop1i}, define 
	\begin{align}\label{Mt1}
	M(t)=\frac{4\|v(0)\|_{L^{\sigma_c}}}{\|V\|_{L^{\sigma_c}}}\geq 2,\,\,\,\,\,\mbox{ and }\,\,\,\,\,L(t)=[100[M(t)]^{\alpha_2}]^{\frac{1}{2(1-s_c)}}. 
	\end{align}
We consider, without loss of generality, that for all $t$ close enough to $T^{\ast}$
	\begin{align}\label{Lt}
	L(t)<e^{\frac{\sqrt{N(t)}}{2}},
	\end{align}
	otherwise we would have
	\begin{align}
	\|v(0)\|_{L^{\sigma_c}}=\frac{\|V\|_{L^{\sigma_c}}}{4}M(t)=c[L(t)]^{\frac{2(1-s_c)}{\alpha_2}}\geq ce^{\frac{2(1-s_c)}{\alpha_2}\frac{\sqrt{N(t)}}{2}}\geq N(t),
	\end{align}
which implies \eqref{N1}. 

	Now, for all $i\in [\sqrt{N(t)},N(t)]$, we show the existence of times $\tau_i\in[0,e^i]$ such that
	\begin{align}\label{Ftaui}
	F(\tau_i)\leq L(t)\,\,\,\,\mbox{ and }\,\,\,\,\,\frac{1}{10L(t)}e^{\frac{i-1}{2}}\leq \lambda_v(\tau_i)\leq \frac{10}{L(t)}e^{\frac{i}{2}},
	\end{align}
	where $F(\tau)=\sqrt{\tau}\left/\right.\lambda_v(\tau)$ as in \eqref{F}. Let $i\in [\sqrt{N(t)},N(t)]$ and consider
	\begin{align}
	\mathcal{A}=\left\{\tau\in [0,e^{i}];\,\,\,\lambda_v(\tau)>\frac{e^{\frac{i+1}{2}}}{L(t)}\right\}.
	\end{align}
	First, using \eqref{vt} and estimate \eqref{Lt}, we get
	\begin{align}
	\lambda_v(0)=\left(\frac{1}{\|\nabla v(0)\|_{L^2}}\right)^{\frac{1}{1-s_c}}=1<\frac{e^{\frac{\sqrt{N(t)}}{2}}}{L(t)}\leq \frac{e^{\frac{i+1}{2}}}{L(t)}
	\end{align}
	and thus, $0\notin \mathcal{A}$. If $\mathcal{A}$ is nonempty, the continuity of $\lambda_v(\tau)$ implies the existence of $\tau_i\in [0,e^{i}]$ such that $\lambda_v(\tau_i)=\frac{e^{\frac{i+1}{2}}}{L(t)}$. Moreover, 
	\begin{align}
	F(\tau_i)=\frac{\sqrt{\tau_i}}{\lambda_v(\tau_i)}\leq L(t)\frac{\sqrt{e^i}}{e^{\frac{i+1}{2}}}\leq L(t)
	\end{align}
	and
	\begin{align}
	\frac{1}{10}\frac{e^{\frac{i-1}{2}}}{L(t)}\leq e\frac{e^{\frac{i-1}{2}}}{L(t)}=\lambda_v(\tau_i)=e^{\frac{1}{2}}\frac{e^{\frac{i}{2}}}{L(t)}\leq \frac{10e^{\frac{i}{2}}}{L(t)},
	\end{align}
	which proves \eqref{Ftaui}. 
	
On the other hand, if $\mathcal{A}$ is empty, then, for all $\tau\in [0,e^i]$
	\begin{align}
	\lambda_v(\tau)\leq \frac{e^{\frac{i+1}{2}}}{L(t)}\leq \frac{10}{L(t)}e^{\frac{i}{2}}.
	\end{align}
	By contradiction, assume that for all $\tau \in [e^{i-1},e^i]$
	\begin{align}
	F(\tau)\geq L(t),\,\,\,\mbox{ that is }\,\,\,\,
	\|\nabla u(\tau)\|_{L^2}\geq \left(\frac{L(t)}{\sqrt{\tau}}\right)^{1-s_c}.
	\end{align} 
	Thus, from the estimate \eqref{prop1i}, we get
	\begin{align}
	[M(t)^{\alpha_2}]e^{i(1+s_c)}
	&\geq \int_{0}^{e^i}(e^i-\tau)\|\nabla v(\tau)\|^2_{L^2}\, d\tau\\
	&\geq \int_{e^{i-1}}^{\frac{e^i}{2}}\tau\|\nabla v(\tau)\|^2_{L^2}\,d\tau\\
	&\geq [L(t)]^{2(1-s_c)}\int_{e^{i-1}}^{\frac{e^i}{2}}\frac{\tau}{\tau^{1-s_c}}\,d\tau\\
	&=[L(t)]^{2(1-s_c)}\frac{e^{i(1+s_c)}}{1+s_c}\left(\frac{1}{2^{1+s_c}}-\frac{1}{e^{1+s_c}}\right)\\
	&>\frac{[L(t)]^{2(1-s_c)}}{100}e^{i(1+s_c)}\\
	&=[M(t)]^{\alpha_2}e^{i(1+s_c)},
	\end{align}
where we have used in the last step the definition of $L(t)$ in \eqref{Mt1}. Thus, we obtain a contradiction. Consequently, there exists $\tau_i\in[e^{i-1},e^i]$ such that 
	$
	F(\tau_i)\leq L(t).
	$
Moreover
	\begin{align}
	\lambda_v(\tau_i)=\frac{\sqrt{\tau_i}}{F(\tau_i)}\geq \frac{\sqrt{e^{i-1}}}{L(t)}\geq \frac{e^{\frac{i-1}{2}}}{10L(t)}
	\end{align}
	and \eqref{Ftaui} is proved.

Next, we prove an uniform lower bound for the $L^{\sigma_c}$ norm over a suitable annulus on space. Let $i\in [\sqrt{N(t)},N(t)]$ be an integer and $\tau_i\in[0,e^i]$ the times satisfying \eqref{Ftaui}. We show the existence of universal positive constants $\alpha_4(N,\sigma, b), \, c_4(N,\sigma,b)>0$ such that in the annulus
	\begin{align}\label{Rti1}
	\mathcal{C}_i=\left\{x\in \Real^N;\, \frac{\lambda_v(\tau_i)}{[M(t)]^{\alpha_4}}\leq |x|\leq [M(t)]^{\alpha_4}\lambda_v(\tau_i)\right\},
	\end{align}
	we have
	\begin{align}\label{teste}
	\int_{\mathcal{C}_i}|v(0)|^{\sigma_c}\,dx\geq \frac{c_4}{[M(t)]^{\alpha_4s_c\sigma_c}}.
	\end{align}
Indeed, from \eqref{Mt1} and \eqref{Ftaui} we have
	\begin{align}
	F(\tau_i)\leq L(t)=[100[M(t)]^{\alpha_{2}}]^{\frac{1}{2(1-s_c)}}.
	\end{align}
Moreover
\begin{equation}\label{Dast}
D_*(t):=[M(t)]^{\alpha_3}[F(\tau_i)]^{\frac{1+s_c}{1-s_c}}\leq [M(t)]^{\alpha_4},
\end{equation}
for some $\alpha_4(N,\sigma_c)>0$, independent of $t$. Thus, the choice of $D_*(t)$ in \eqref{F} is uniform with respect to $i$ and, in view of Lemma \ref{vteo2}, we can apply Proposition \ref{prop2} to $v(\tau_i)$. Therefore, for all $i\in[\sqrt{N(t)},N(t)]$ we apply H\"older's inequality, \eqref{Dast}, \eqref{tprop2} and \eqref{Mt1}, to get
	\begin{align}
	\int_{\frac{\lambda_v(\tau_i)}{[M(t)]^{\alpha_4}}\leq |x|\leq [M(t)]^{\alpha_4}\lambda_v(\tau_i)}|v(0)|^2\,dx
	=&\int_{|x|\leq [M(t)]^{\alpha_4}\lambda_v(\tau_i)}|v(0)|^2\,dx-
	\int_{|x|\leq\frac{\lambda_v(\tau_i)}{[M(t)]^{\alpha_4}}}|v(0)|^2\,dx\\
	\geq&\int_{|x|\leq D_*(t)\lambda_v(\tau_i)}|v(0)|^2\,dx-
	\frac{c\lambda_v^{2s_c}(\tau_i)}{[M(t)]^{2s_c\alpha_4}}\left\|v(0)\right\|^{2}_{L^{\sigma_c}}\\
	\geq& \lambda_v^{2s_c}(\tau_i)\left(C_2-\frac{cM^2(t)}{[M(t)]^{2s_c\alpha_4}}\right)
	\geq\lambda^{2s_c}_v(\tau_i)\frac{C_2}{2},
	\end{align}
for $\alpha_4>0$ large enough\footnote{Note again that this choice is independent of $t$ since $M(t)\geq 2$ for all $t\in [0,T^{\ast})$.}. Moreover, again H\"older's inequality yields
	\begin{align}
	\int_{\frac{\lambda_v(\tau_i)}{[M(t)]^{\alpha_4}}\leq |x|\leq [M(t)]^{\alpha_4}\lambda_v(\tau_i)}|v(0)|^2\,dx\leq c\lambda^{2s_c}_v(\tau_i)[M(t)]^{2s_c\alpha_4}\left(\int_{\mathcal{C}_i}|v(0)|^{\sigma_c}\,dx\right)^{\frac{2}{\sigma_c}}.
	\end{align}
Combining the last two inequalities we conclude the proof of \eqref{teste}.

Now, let  $p(t)>1$ an integer such that 
	\begin{align}\label{cotapt}
	10^3[M(t)]^{2\alpha_4}\leq e^{\frac{p(t)}{2}}\leq 10^7[M(t)]^{2\alpha_4}.
	\end{align}
	Note that, if $p(t)>\sqrt{N(t)}$, then by definition \eqref{Mt1}
	\begin{align}
	\|v(0)\|_{L^{\sigma_c}}^{2\alpha_4}=c[M(t)]^{2\alpha_4}\geq ce^{\frac{p(t)}{2}}\geq p(t)\geq \sqrt{N(t)},
	\end{align}
which implies \eqref{N1}. On the other hand, consider $p(t)<\sqrt{N(t)}$. Let $(i,i+p(t))\in [\sqrt{N(t)},N(t)]\times [\sqrt{N(t)},N(t)]$, then by \eqref{Ftaui} and \eqref{cotapt}
	\begin{align}
\lambda_v(\tau_{i+p(t)})&\geq \frac{e^{\frac{i+p(t)-1}{2}}}{10L(t)}
	\geq\frac{10^3[M(t)]^{2\alpha_4}e^{\frac{i-1}{2}}}{10L(t)}\geq [M(t)]^{2\alpha_4}\frac{10e^{\frac{i}{2}}}{L(t)}\geq [M(t)]^{2\alpha_4}\lambda_v(\tau_i),
	\end{align}
and, dividing the previous inequality by $[M(t)]^{\alpha_4}$, we deduce that the annuli $\mathcal{C}_i$ and $\mathcal{C}_{i+p(t)}$ given by \eqref{Rti1} are disjoints for each $(i,i+p(t))\in [\sqrt{N(t)},N(t)]\times [\sqrt{N(t)},N(t)]$.
	
	Since $p(t)<\sqrt{N(t)}$ and $\frac{1}{10}<1-\frac{\sqrt{N(t)}}{N(t)}$ for $t$ close enough to $T^{\ast}$, there are at least $\frac{N(t)}{10p(t)}\geq \frac{1}{10}\sqrt{N(t)}$ disjoint annuli satisfying the uniform lower bound \eqref{teste}. Hence, summing over these sets, we finally deduce
	\begin{align}
	\|v(0)\|_{L^{\sigma_c}}^{\sigma_c}\geq \sum_{k=0}^{\frac{N(t)}{10p(t)}}\int_{\mathcal{C}_{kp(t)+\sqrt{N(t)}}}|v(0)|^{\sigma_c}\,dx\geq \frac{c_4}{[M(t)]^{\alpha_4\sigma_cs_c}}\frac{\sqrt{N(t)}}{10}=C\frac{\sqrt{N(t)}}{\|v(0)\|_{L^{\sigma_c}}^{\alpha_4\sigma_cs_c}},
	\end{align}
where we have used \eqref{Mt1} in the last step. The previous inequality implies \eqref{N1} and completes the proof of Theorem \ref{scteo2}.
\end{proof}

Finally, we prove our last result, which asserts that even without the assumption \eqref{esttl} the $\dot H^{s_c}$ norm of finite time radially symmetric solutions in $\dot H^{s_c}\cap \dot H^1$ is never uniformly bounded in time.

\begin{proof}[Proof of Corollary \ref{cor13}]
Let $u\in C([0,T^{\ast}): \dot H^{s_c}\cap \dot H^1 )$ be a  solution to \eqref{PVI} with finite maximal time existence $T^{\ast}>0$ such that
	\begin{align}\label{Msup}
	\sup_{t\in [0,T^{\ast})}\|u(t)\|_{\dot H^{s_c}}=M<+\infty,
	\end{align} 
Then for any $t\in [0,T^{\ast})$, we consider the following scaling of $u$
	\begin{align}\label{scaling}
	v^{(t)}(x,\tau)=\rho^{\frac{2-b}{2\sigma}}(t)u(\rho(t)x,t+\rho^2(t)\tau),
	\end{align}
	where $\rho(t)^{1-s_c}\|\nabla u(t)\|_{L^2}=1$. Hence,
$$
\|v^{(t)}(0)\|_{\dot H^{s_c}}=\|u(t)\|_{\dot H^{s_c}}\leq M\,\,\,\mbox{ and }\,\,\,\,\|\nabla v^{(t)}(0)\|_{L^2}=\rho(t)^{1-s_c}\|\nabla u(t)\|_{L^2}=1
$$
and thus, from the local Cauchy theory in $\dot H^{s_c}\cap\dot H^1 $ (see \cite[Theorem 1.2.]{CFG20}), there exists $\tau_0$, independent of $t,$ such that $v^{(t)}$ is defined on $[0,\tau_0]$. Therefore, $t+\rho^2(t)\tau_0<T^{\ast}$ and this is \eqref{esttl}. From Theorem \ref{scteo2}, the lower bound \eqref{rate} holds and, in particular, we have that
$$
\lim_{t\uparrow T^{\ast}}\|u(t)\|_{\dot H^{s_c}}=+\infty,
$$
which is a contradiction with \eqref{Msup}. 
\end{proof}

\vspace{0.5cm}
\noindent 
\textbf{Acknowledgments.} M.C. was partially supported by Coordena\c{c}\~ao de Aperfei\c{c}oamento de Pessoal de N\'ivel Superior - CAPES. L.G.F. was partially supported by Coordena\c{c}\~ao de Aperfei\c{c}oamento de Pessoal de N\'ivel Superior - CAPES, Conselho Nacional de Desenvolvimento Cient\'ifico e Tecnol\'ogico - CNPq and Funda\c{c}\~ao de Amparo a Pesquisa do Estado de Minas Gerais - Fapemig/Brazil. 



\end{document}